\documentclass[a4paper]{amsart} 

\textwidth=5.9in
\hoffset=-0.45in
\textheight=22cm
\voffset=-0.5cm
\newcommand{\polishl}{\l}

\newcommand{\f}{\frac}

\newcommand{\del}{\partial}

\newcommand{\R}{\mathbb R}
\newcommand{\C}{\mathbb C}
\newcommand{\N}{\mathbb N}

\newcommand{\eps}{\varepsilon}
\renewcommand{\epsilon}{\varepsilon}

\newcommand{\Om}{\Omega}

\newcommand{\h}{\mathcal{H}}

\newenvironment{enumi}{\begin{enumerate}[(i)]}{\end{enumerate}}

\AtBeginDocument{

}

\usepackage{etex}
\usepackage[english]{babel}
\usepackage{tikz}
\usetikzlibrary{arrows,decorations.pathmorphing,backgrounds,positioning,fit,petri,patterns}
\usepackage{pgfplots}
\usepackage[utf8]{inputenc}
\usepackage{graphicx}
\usepackage[colorlinks=true,linkcolor=red,citecolor=blue]{hyperref}
\usepackage{amsfonts}
\usepackage{enumerate}
\usepackage{booktabs}
\usepackage{array} 
\usepackage{paralist} 
\usepackage{subfig} 
\usepackage{mathtools}
\usepackage{tabu}
\usepackage{amsthm}
\usepackage{empheq}
\usepackage{amsopn}
\usepackage{dsfont}
\usepackage{wrapfig}
\usepackage[font=small]{caption}
\usepackage{units}
\usepackage[symbol]{footmisc}
\usepackage{todonotes}
\usepackage{amssymb}
\usepackage{pdfpages}
\usepackage{setspace}
\usepackage{float}
\usepackage{scrextend}

\makeatletter
\renewenvironment{cases}[1][l]{\matrix@check\cases\env@cases{#1}}{\endarray\right.}
\def\env@cases#1{%
  \let\@ifnextchar\new@ifnextchar
  \left\lbrace\def\arraystretch{1.2}%
  \array{@{}#1@{\quad}l@{}}}
\makeatother

\usepackage{kvsetkeys}
\usepackage{etexcmds}

\makeatletter
\newcount\arg@count
\newcommand{\arg@parser}[1]{%
  \advance\arg@count\@ne
  \expandafter\let\csname arg\romannumeral\arg@count\endcsname\comma@entry
}
\newcommand\res[1]{
  \arg@count=\z@
  \comma@parse{ \lambda,A }\arg@parser 
  \arg@count=\z@
  \comma@parse{#1}\arg@parser
  \ifnum\arg@count>2 %
    \@latex@error{Too many arguments}{%
      The macro \string\mycmd\space got \the\arg@count\space
       arguments,\MessageBreak
      but expected are 2 arguments.\MessageBreak
      \@ehd
    }%
  \fi
  \edef\process@me{%
    \noexpand\@res
    {\etex@unexpanded\expandafter{\argi}}%
    {\etex@unexpanded\expandafter{\argii}}%
  }%
  \process@me
}
\newcommand{\@res}[2]{%
  \ensuremath\left( #1 - #2 \right)^{-1}
}
\makeatother

\makeatletter
\newcount\arg@count
\newcommand\p[1]{
  \arg@count=\z@
  \comma@parse{  }\arg@parser 
  \arg@count=\z@
  \comma@parse{#1}\arg@parser
  \ifnum\arg@count=2 %
  \else
    \@latex@error{Wrong number of mandatory arguments}{%
      The macro \string\p\space got \the\numexpr\arg@count-2\relax\space
      mandatory arguments,\MessageBreak
      but expected are 3 mandatory arguments.\MessageBreak
      \@ehd
    }%
  \fi
  \edef\process@me{%
    \noexpand\@p
    {\etex@unexpanded\expandafter{\argi}}%
    {\etex@unexpanded\expandafter{\argii}}%
  }%
  \process@me
}
\newcommand{\@p}[2]{%
  \ensuremath \left\langle #1 , #2 \right\rangle
}
\makeatother

\allowdisplaybreaks
\numberwithin{equation}{section}

\theoremstyle{definition}
\newtheorem{de}{Definition}[section]
\newtheorem{hyp}[de]{Hypothesis}
\theoremstyle{plain}
\newtheorem{prop}[de]{Proposition}
\newtheorem{lemma}[de]{Lemma}
\newtheorem{theorem}[de]{Theorem}
\newtheorem{notation}[de]{Notation}

\newtheorem{corollary}[de]{Corollary}

\theoremstyle{remark}
\newtheorem{remark}[de]{Remark}

\usepackage{multirow,booktabs}
\usepackage{scrextend}
\usepackage[outline]{contour}
\usepackage[ruled, vlined]{algorithm2e}

\SetCommentSty{mycommfont}
\usepackage{soul}
\usepackage{sidecap}
\setlength{\marginparwidth}{2.3cm}
\usepackage{tabularx}
\usepackage{amsmath}
 \pgfplotsset{every tick label/.append style={font=\footnotesize},
}

\newcommand{\SCI}{\operatorname{SCI}}

\newcommand{\cO}{\mathcal{O}}

\newcommand{\cA}{\mathfrak{A}}
\newcommand{\cH}{\mathcal{H}}
\newcommand{\cb}{\mathfrak{b}}

\newlength\figurewidth
\newlength\figureheight
\newlength\hfiguremargin
\newlength\vfiguremargin
\newlength{\tablength}

\title{On the complexity of the inverse Sturm-Liouville problem}
\author{Jonathan Ben-Artzi}
\email{Ben-ArtziJ@cardiff.ac.uk}
\author{Marco Marletta}
\email{MarlettaM@cardiff.ac.uk}
\author{Frank R\"{o}sler}
\email{Frank.Roesler@unibe.ch}
\thanks{JBA  acknowledges support from an Engineering and Physical Sciences Research Council Fellowship EP/N020154/1. MM acknowledges support from an Engineering and Physical Sciences Research Council Grant EP/T000902/1. FR acknowledges support from the European Union's Horizon 2020 Research and Innovation Programme under the Marie Sk{\polishl}odowska-Curie grant agreement No 885904.}
\address{School of Mathematics, Cardiff University, Senghennydd Road, Cardiff CF24 4AG, Wales, UK}
\address{Mathematisches Institut,  Universit\"at Bern, Alpeneggstrasse 22,  3012 Bern, Switzerland}
\date\today
\keywords{Inverse Sturm-Liouville Problem, Solvability Complexity Index Hierarchy, Computational Complexity}
\subjclass[2010]{34A55, 34B24, 65F18, 65L09, 68Q25}

\begin{document}
\maketitle
\begin{abstract}
	This paper explores the complexity associated with solving the inverse Sturm-Liouville problem with Robin boundary conditions: given a sequence of eigenvalues and a sequence of norming constants, how many limits does a universal algorithm require to return the potential and boundary conditions?
	It is shown that if all but finitely many of the eigenvalues and norming constants coincide with those for the zero potential then the number of limits is zero, i.e. it is possible to retrieve the potential and boundary conditions \emph{precisely} in finitely many steps. Otherwise, it is shown that this problem requires a single limit; moreover, if one has \emph{a priori} control over how much the eigenvalues and norming constants differ from those of the zero-potential problem, and one knows that the average of the potential is zero, then the computation can be performed with complete error control. This is done in the spirit of the Solvability Complexity Index. All algorithms are provided explicitly along with numerical examples.
\end{abstract}
\setcounter{tocdepth}{1}
\tableofcontents

\section{Introduction and Statement of Main Results}
Inverse problems -- and their reliable computation -- play an important role in many day-to-day applications, such as medical imaging. The purpose of the present article is the rigorous construction of a \emph{one-size-fits-all} algorithm for inverse Sturm-Liouville problems. Namely, we seek an algorithm  that takes as input sequences $\{\lambda_n\}_{n\in\N_0}$ and $\{\alpha_n\}_{n\in\N_0}$\footnote{$\N_0:=\N\cup\{0\}=\{0,1,2,\dots\}$} of eigenvalues and norming constants, respectively, corresponding to the Sturm-Liouville problem (see Section \ref{sec:sturm} below for further discussion)
\begin{align}\label{eq:spectral_problem}
\begin{cases}
	-\psi'' + q(x)\psi = \lambda \psi,&x\in[0,\pi],
	\\
	\hspace{.45cm} \psi'(0) = h\psi(0),
	\\
	\hspace{.4cm} \psi'(\pi) = -H\psi(\pi),
\end{cases}
\end{align}
and returns $q\in L^\infty([0,\pi])$ and $h,H\in\R$. 

The purpose here is not computational efficacy (indeed, this is \emph{not} a paper in numerical analysis) but rather it is to establish whether such an algorithm exists. The framework required for this analysis is furnished by the \emph{Solvability Complexity Index (SCI) Hierarchy}, which is a 
theory for the classification of the computational complexity and limitations of algorithms. This framework has been developed over the last decade by a growing number of authors (cf. \cite{Hansen11,AHS,Ben-Artzi2015a}). This theory is discussed in Section \ref{sec:sci} below, where we define what an `algorithm' is, and in what sense it can `return' $q$, $h$ and $H$. A preliminary version of our main theorem is the following.

\begin{theorem}\label{thm:main1}
Assume that there exist sequences  $\{\kappa_n\}\in\ell^2(\N_0;\C)$, $\{\tilde\kappa_n\}\in\ell^2(\N_0;\R)$ and $\omega\in\R$ such that $\lambda_n$ and $\alpha_n$, $n\in\N_0$, satisfy:
\begin{align}\label{eq:asymptotics}
\begin{aligned}
	\lambda_n^{\f12} &= n + \f{\omega}{\pi(n+1)}+\f{\kappa_n}{n+1},
	\\
	\f{1}{\alpha_n} &= \f{2}{\pi} + \f{\tilde\kappa_n}{n+1}.
\end{aligned}
\end{align}
Then there exists an algorithm which uses only arithmetic operations, for which
	\begin{enumerate}[(i)]
	\item for any $p\in[1,+\infty]$, $q$ can be approximated in $W^{-1,p}([0,\pi])$ and $h,H\in\R$ can also be approximated, though error control for the computation of the triple $(q,h,H)$ is impossible;
	\item if $\omega=0$ and one is given $M>0$ such that $\|\kappa\|_{\ell^2},\|\tilde\kappa\|_{\ell^2}\leq M$, the above approximation can be performed with complete error control;
	\item if only finitely many of the $\kappa_n$ and $\tilde\kappa_n$ are nonzero, $q$, $h$ and $H$ can be computed precisely with finitely many arithmetic operations. 
	\end{enumerate}
\end{theorem}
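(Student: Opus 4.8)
The plan is to combine the classical inverse Sturm--Liouville theory (Gelfand--Levitan / Marchenko reconstruction) with a careful analysis of how the spectral data enters an algorithm that is only allowed arithmetic operations, keeping track at each stage of what can be computed with error control and what cannot. The three parts of the theorem correspond to three distinct situations: (i) the generic case, where only convergence (no error bound) is available because a single limit is genuinely needed; (ii) the case $\omega = 0$ with a known bound $M$ on the tails, where the a priori control lets us truncate the reconstruction with a quantitative error estimate; and (iii) the finite-data case, where the reconstruction terminates exactly.

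For part (iii), which is the statement in question, I would proceed as follows. First, observe that under the hypothesis that only finitely many $\kappa_n$ and $\tilde\kappa_n$ are nonzero, the spectral data $\{\lambda_n\}$, $\{\alpha_n\}$ differs from the spectral data of the \emph{base problem} --- the zero potential with some boundary parameters $h_0, H_0$ determined by $\omega$ (and, when $\omega = 0$, with $h_0 = H_0 = 0$, i.e. Neumann conditions) --- in only finitely many entries. The key structural fact I would invoke is that such a finite modification of the spectral data of a solvable Sturm--Liouville problem is again realised by an explicit, \emph{finite-rank} perturbation: the Gelfand--Levitan kernel for the modified problem is obtained from that of the base problem by solving a finite linear system, because the difference of the two spectral measures is a finite sum of point masses (after the asymptotic part is subtracted). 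Concretely, the transformation operator kernel $K(x,y)$ satisfies the Gelfand--Levitan integral equation with a kernel $F(x,y)$ that, in this case, is a finite sum $\sum_{n} c_n \phi_n^0(x)\phi_n^0(y)$ of products of the known base eigenfunctions (which for the zero potential are explicit trigonometric functions), plus possibly finitely many correction terms from the shifted eigenvalues. The Gelfand--Levitan equation then reduces, for each fixed $x$, to a finite-dimensional linear algebra problem whose coefficients are built from $\lambda_n, \alpha_n$ by finitely many arithmetic operations; its solution yields $K(x,\cdot)$ as an explicit finite combination of trigonometric functions, and then $q(x) = 2\frac{d}{dx}K(x,x)$ and $h = K(0,0) + h_0$, $H = -\text{(analogous expression)}$ are recovered exactly.

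The steps, in order, would be: (1) identify the base problem from $\omega$ and show its spectral data and eigenfunctions are explicitly known; (2) show that subtracting the base spectral measure from the given one leaves a finitely-supported signed measure, using the asymptotics \eqref{eq:asymptotics} together with the finiteness hypothesis --- here one must be careful that the eigenvalues themselves, not just the $\kappa_n$, agree with the base ones for all large $n$, which follows since $\omega$ is the \emph{same} $\omega$ that defines the base problem and $\kappa_n = 0$ for large $n$; (3) write down the Gelfand--Levitan (or Marchenko) equation, observe it becomes a finite linear system with rational-in-the-data coefficients, and solve it by Cramer's rule / Gaussian elimination --- all arithmetic; (4) extract $q$, $h$, $H$ from the solution by differentiation of an explicit elementary function, which is again a finite arithmetic computation once $K$ is written in closed form; (5) conclude that the whole procedure halts after finitely many arithmetic operations and returns the exact triple, so the SCI in this subclass is $0$ (with $\Delta_0$-type exactness).

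The main obstacle I anticipate is step (2) together with the bookkeeping in step (3): one must verify carefully that the classical reconstruction data $F(x,y)$ really is a \emph{finite} sum under the stated hypothesis and that no infinite tail survives --- this requires matching the normalization conventions of the inverse theory (norming constants vs. $\|\psi_n\|^2$, the role of $\lambda_0$ and of the index shift $n+1$) with the asymptotics in \eqref{eq:asymptotics}, and checking that the finitely-many perturbed eigenvalues do not collide with unperturbed ones (or handling multiplicity if they do). A secondary subtlety is ensuring that every quantity fed into the linear system is obtained from the input sequences by \emph{exact} arithmetic with no limiting process --- in particular that the base eigenfunctions evaluated at the relevant points, and the coefficients $c_n$, are rational functions of $\lambda_n$ and $\alpha_n$ (or can be arranged to be, perhaps by working with a slightly redefined but equivalent set of reconstruction data). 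Once these points are settled, the remainder is routine finite linear algebra and differentiation of trigonometric polynomials.
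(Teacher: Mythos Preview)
Your outline is essentially the paper's proof: the Gelfand--Levitan kernel $F$ is degenerate, so the integral equation for $K(x,\cdot)$ reduces to a finite linear system (the paper writes the ansatz $u_x(y)=f_x(y)+\sum_i c_iA_i(y)$, obtains $(I-\mathfrak A)c=\mathfrak b$, inverts via Cramer/adjugate), and then $q=2\frac{d}{dx}K(x,x)$, $h=K(0,0)$, $H=-K(\pi,\pi)$ are read off in closed form as rational functions of $x,\cos(\lambda_j^{1/2}x),\sin(\lambda_j^{1/2}x)$, etc.

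Two points where your write-up diverges from the paper and where you should adjust:

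\textbf{The ``base problem''.} Your step (1)--(2) tries to match the tail of the data to a zero-potential problem with Robin parameters $h_0,H_0$ determined by $\omega$. No such base problem exists in closed form: the eigenvalues of $-\psi''=\lambda\psi$ with $\psi'(0)=h_0\psi(0)$, $\psi'(\pi)=-H_0\psi(\pi)$ are roots of a transcendental equation and are \emph{not} exactly $(n+\omega/(\pi(n+1)))^2$. The paper avoids this entirely by formalising part (iii) via the set $\Omega_N=\{(\lambda,\alpha):\lambda_n=n^2,\ \alpha_n=\pi/2\ \text{for }n>N\}$, i.e.\ the tail matches the \emph{Neumann} zero-potential problem on the nose; the base eigenfunctions are then literally $\cos(nx)$ and $F$ collapses to the finite sum \eqref{eq:F_finite} with no residual tail. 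Your anticipated obstacle in step (2) dissolves once you adopt this framing rather than trying to manufacture a base problem for general $\omega$.

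\textbf{Trigonometric evaluations and the formula for $H$.} Your ``secondary subtlety'' about whether the coefficients are exact arithmetic functions of the input is real: the matrix entries involve $\cos(\lambda_j^{1/2}x)$, $\sin(\lambda_j^{1/2}x)$, and the paper explicitly assumes an oracle for evaluating trigonometric functions in the $\Delta_0$ setting (Remark~\ref{rek:trig}). Also, your ``$H=-\text{(analogous expression)}$'' needs an argument: the paper obtains $H=-K(\pi,\pi)$ not by symmetry but by writing $H=-\phi'(\pi,n^2)/\phi(\pi,n^2)$ for $n>N$, expanding $\phi$ and $\phi'$ via the transformation kernel, and using Riemann--Lebesgue to take $n\to\infty$ (the quotient being $n$-independent). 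You should include that step.
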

This theorem is restated in equivalent (and more precise) form in the language of the SCI Hierarchy as Theorem \ref{thm:main2} in the sequel.
\begin{remark}
	We note that the expressions \eqref{eq:asymptotics} are not a numerical requirement, but are necessary for the inverse spectral problem to be well-posed in the first place. In this sense, the above existence result is generic.
\end{remark}
\begin{remark}
The computation of this inverse problem requires us to evaluate trigonometric functions. This evaluation can be included as part of the approximation procedure when this procedure is infinite, but not when the procedure is finite (as in part (iii) of the theorem). In that case we must assume that there exists an \emph{oracle} that can perform such evaluations for us at no additional computational cost (see also Remark \ref{rek:trig} below).
\end{remark}
\subsection{Classical Sturm-Liouville inverse problem}\label{sec:sturm}
The history of the one-dimensional inverse spectral problem for the Sturm-Liouville equation in Liouville normal form goes back to the work 
of Ambarzumjan \cite{Amb}, who proved that only the potential $q\equiv 0$ can give the eigenvalues $\{n^2\}_{n=0}^\infty$
for Neumann boundary conditions. Borg \cite{MR15185} obtained the first general results, for recovery of the potential $q$ from
two spectra (belonging to different boundary conditions), subject to a technical restriction removed by Levinson
\cite{MR32067}. Marchenko's 1950 paper \cite{MR36916}, which generalized these works to prove unique determination
of the potential $q(x)$ by the so-called spectral function $\rho(\lambda)$ and allowed treatment of problems on a semi-axis, marked the 
start of a period of intense research in the Soviet school, culiminating in the Gel'fand-Levitan-Marchenko theory and associated 
integral equation;  overviews of this work may be found in the monographs of Levitan \cite{Levitan}, Marchenko \cite{MR2798059}, and Freiling and Yurko \cite{FreilingYurko}. We also mention the classical text by P\"oschel and Trubowitz \cite{MR894477}. 

Compared to research on numerical algorithms for forward problems, numerical work on inverse problems was sparse. Despite 
the local stability results of Ryabushko \cite{MR734693} and McLaughlin \cite{MR954908}, the inverse problem is well known 
to be ill-conditioned, a fact which is reflected  in the rather weak norm in which Marletta and Weikard \cite{MR2158108} 
estimate errors in $q$ arising from errors in finite spectral data. However by the early 1990s, computing power had reached a 
level which allowed  Andersson \cite{MR1107960} and Rundell and Sacks \cite{Rundell-Saks} to propose algorithms which could run 
on desktop machines. The results in  \cite{Rundell-Saks} show clearly that smoother potentials are recovered more accurately,
something which is explained precisely by the results of Savchuk and Shkalikov \cite{MR2768563}. Nevertheless, the inverse
problem appears to be intrinsically more computationally demanding, especially if one uses an approach which requires the
solution of many `trial' forward problems in order to approximate the solution of the inverse problem. One might 
easily be led by this reasoning to suspect that whatever the complexity of the forward problem (as defined below in Definition \ref{def:sci}), the complexity of the inverse problem
should be greater by at least $+1$. Our main result, Theorem \ref{thm:main1}, shows that this is not true.

Part of the key to obtaining this unexpectedly optimistic result for the inverse problem with full spectral data is the availability
of an algorithm which, for {\em finite spectral data}, recovers a potential $q$ fitting the finite data by solving a finite system of linear 
algebraic equations whose coefficients and right hand side are expressed explicitly in terms of elementary functions of the independent 
variable $x\in [0,\pi]$. The algorithm replaces the (countably infinite) missing spectral data required for a unique solution by the 
values for the free problem $q\equiv 0$; this is equivalent to approximating the data kernel (see \eqref{eq:F_def} 
below) for the Gel'fand-Levitan-Marchenko equation \eqref{eq:integral_equation} by truncating the infinite sum which defines it. In essence, this approach was proposed by McLaughlin and Handelman 
\cite{McLaughlin-Handelman} as a method for creating, from a known Schr\"{o}dinger equation, a new equation with finitely many different 
eigenvalues and norming constants, although no numerical results were presented there.\\

The proof of Theorem \ref{thm:main1} proceeds in the order (iii), (i), (ii). Part (iii) involves a careful analysis of the finite
data algorithm to demonstrate that none of its finitely many steps requires any limiting procedures. Part (i) depends on showing that 
the finite-data potentials converge, in suitable topologies, as the data set expands to a full data set. Part (ii) is the most
technical, as it involves constructing a rigorous set of \emph{a posteriori} error bounds using the data  bound $M$ and the
knowledge that $\omega=0$. Along the way, we prove quantitative versions of the Riesz basis results in Freiling and Yurko 
\cite[\S1.8.5]{FreilingYurko}, which may be of independent interest -- see Proposition \ref{prop:C_Riesz}.\\

In the remainder of this subsection, we set out some notation and basic facts concerning inverse Sturm-Liouville problems which we shall require throughout the rest of our article.

The Sturm-Liouville problem \eqref{eq:spectral_problem} has a sequence $\{\lambda_n\}_{n\in\N_0}$ of eigenvalues and a sequence $\{\alpha_n\}_{n\in\N_0}$ of normalizing constants. The latter are defined as follows. Denote by $\phi(x,\lambda)$ the solution of \eqref{eq:spectral_problem} satisfying $\phi(0,\lambda)=1$. Then for $n\in\N_0$ we define
\begin{align*}
	\alpha_n := \int_0^\pi \phi(x,\lambda_n)^2\,dx.
\end{align*}
By \cite[Thm. 2.10.4-2.10.6]{Levitan} the potential $q$ can be reconstructed from the sequences $\{\lambda_n\}_{n\in\N_0}, \{\alpha_n\}_{n\in\N_0}$. In fact, having a  representation as in \eqref{eq:asymptotics}  is a necessary and sufficient condition in order for there to exist $q,h,H$ such that $\{\lambda_n\}_{n\in\N_0}, \{\alpha_n\}_{n\in\N_0}$ are the spectral data of the problem \eqref{eq:spectral_problem} (cf. \cite[Th. 1.5.2]{FreilingYurko}). In that case the parameter $\omega$ in \eqref{eq:asymptotics}  is  given by
\begin{equation}\label{eq:omega}
	\omega = h+H+\f12\int_0^\pi q(x)\,dx.
\end{equation}

We now follow \cite{FreilingYurko} and summarize the main ideas of the inverse problem. Defining
\begin{align}\label{eq:F_def}
\begin{aligned}
	F(x,y) &= \alpha_0^{-1} \cos(\lambda_0^{\f12}x)\cos(\lambda_0^{\f12}y) - \pi^{-1} 
	\\
	&\qquad + \sum_{n=1}^\infty \big[ \alpha_n^{-1}\cos(\lambda_n^{\f12}x)\cos(\lambda_n^{\f12}y) - 2\pi^{-1} \cos(nx)\cos(ny) \big]
\end{aligned}
\end{align}
and further defining $K(x,y)$ to be the solution of the integral equation
\begin{equation}\label{eq:integral_equation}
	K(x,y) + F(x,y) + \int_0^x K(x,t)F(t,y)\,dt = 0,
\end{equation}
one can retrieve $q$ via the formula 
\begin{align}\label{eq:q_formula}
	q(x) = 2\f{d}{dx} K(x,x).
\end{align}
Moreover, one has
\begin{align*}
	\phi(x,\lambda) = \cos(\lambda^{\f12}x) + \int_0^x K(x,t)\cos(\lambda^{\f12}t)\,dt
\end{align*}
and the boundary conditions can be reconstructed as
\begin{align}\label{eq:BCs}
\begin{aligned}
	h &= K(0,0) = -F(0,0),
	\\
	H &= -\f{\phi'(\pi,\lambda_n)}{\phi(\pi,\lambda_n)},
\end{aligned}
\end{align}
where the expression for $H$ turns out to be independent of $n$ (cf. \cite[Th. 2.10.5]{Levitan}).
It can be shown \cite[Lemma 1.5.4]{FreilingYurko} that if the expressions \eqref{eq:asymptotics} are satisfied then $F$ is continuous on $[0,\pi]^2$. In particular, $F$ is bounded.
\begin{remark}[Finite spectral data]
Observe that if there exists $N\in\N$ such that for all $n> N$ the spectral data is simply $\lambda_n=n^2$ and $\alpha_n=\pi/2$ then the expression \eqref{eq:F_def} for $F(x,y)$ collapses to a finite sum:
\begin{align}\label{eq:F_finite}
\begin{aligned}
	F(x,y) &= \alpha_0^{-1} \cos(\lambda_0^{\f12}x)\cos(\lambda_0^{\f12}y) - \pi^{-1} 	
	\\
	&\qquad + \sum_{n=1}^N \big[ \alpha_n^{-1}\cos(\lambda_n^{\f12}x)\cos(\lambda_n^{\f12}y) - 2\pi^{-1} \cos(nx)\cos(ny) \big].
\end{aligned}
\end{align}
\end{remark}
\subsection{The Solvability Complexity Index Hierarchy}\label{sec:sci}
The Solvability Complexity Index (SCI) Hierarchy addresses questions which are at the nexus of pure and applied mathematics, as well as computer science. Specifically, it provides a classification of the complexity of  problems that can only be computed as the limit of a sequence of approximations. This classification considers how many independent limits are required to solve a problem (for instance, computing the spectrum of elements in $\mathcal{B}(\ell^2(\N))$ requires three independent limits) and whether one can control the approximation errors.
These broad topics are addressed in the sequence of papers \cite{Hansen11,AHS,Ben-Artzi2015a}. 
Research related to this theory has gathered pace in recent years. We point out \cite{Colbrook2019,Colbrook2019a,Colbrook:2019aa} where some of the theory of spectral computations has been further developed; \cite{R19}  where this has been applied to certain classes of unbounded operators; \cite{Becker2020b,colbrook22} where solutions of PDEs were considered; \cite{BMR2022}  where we considered periodic spectral problems; \cite{BMR2020,Ben-Artzi2020a} where we considered resonance problems; and \cite{Colbrook2019c,Webb:2021aa,Colbrook:2021aa,colbrookhorningtownsend} where the authors give further examples of how to perform certain spectral computations with error bounds. Let us summarize the main definitions of the SCI theory.
\begin{de}[Computational problem]\label{def:computational_problem}
	A \emph{computational problem} is a quadruple $(\Om,\Lambda,\Xi,\mathcal M)$, where 
	\begin{enumi}
		\item $\Om$ is a set, called the \emph{primary set},
		\item $\Lambda$ is a set of complex-valued functions on $\Om$, called the \emph{evaluation set},
		\item $\mathcal M$ is a metric space,
		\item $\Xi:\Om\to \mathcal M$ is a map, called the \emph{problem function}.
	\end{enumi}
\end{de}

\begin{de}[General algorithm]\label{def:Algorithm}
	Let $(\Om,\Lambda,\Xi,\mathcal M)$ be a computational problem. A \emph{general algorithm} is a mapping $\Gamma:\Om\to\mathcal M$ such that for each $T\in\Om$ 
	\begin{enumi}
		\item there exists a finite (non-empty) subset $\Lambda_\Gamma(T)\subset\Lambda$,
		\item the action of $\Gamma$ on $T$ depends only on $\{f(T)\}_{f\in\Lambda_\Gamma(T)}$,
		\item for every $S\in\Om$ with $f(T)=f(S)$ for all $f\in\Lambda_\Gamma(T)$ one has $\Lambda_\Gamma(S)=\Lambda_\Gamma(T)$.
	\end{enumi}
\end{de}

\begin{de}[Tower of general algorithms]\label{def:Tower}
	Let $(\Om,\Lambda,\Xi,\mathcal M)$ be a computational problem. A \emph{tower of general algorithms} of height $k$ for $(\Om,\Lambda,\Xi,\mathcal M)$ is a family $\Gamma_{n_k,n_{k-1},\dots,n_1}:\Om\to\mathcal M$ of general algorithms (where $n_i\in\N$ for $1\leq i \leq k$) such that for all $T\in\Om$
	\begin{align*}
		\Xi(T) = \lim_{n_k\to+\infty}\cdots\lim_{n_1\to+\infty}\Gamma_{n_k,\dots,n_1}(T).
	\end{align*}
\end{de}

\begin{de}[Recursiveness]\label{def:recursive}
Suppose that for all $f\in\Lambda$ and for all $T\in\Omega$ we have $f(T)\in \R$ or $\C$. We say that $\Gamma_{n_k,n_{k-1},\dots,n_1}(\{f(T)\}_{f\in\Lambda})$ is \emph{recursive} if it can be executed by a Blum-Shub-Smale (BSS) machine \cite{BSS} that takes $(n_1,n_2,\dots,n_k)$ as input and that has an oracle that can access  $f(T)$ for any $f\in\Lambda$.
\end{de}

\begin{de}[Tower of arithmetic algorithms]\label{def:Arithmetic-Tower}
Given a computational problem $(\Om,\Lambda,\Xi,\mathcal M)$, where $\Lambda$ is countable, a \emph{tower of arithmetic algorithms}  for $(\Om,\Lambda,\Xi,\mathcal M)$ is a general tower of algorithms where the lowest mappings $\Gamma_{n_k,\dots,n_1}:\Omega\to\mathcal{M}$ satisfy the following:
for each $T\in\Omega$ the mapping $\N^k\ni(n_1,\dots,n_k)\mapsto  \Gamma_{n_k,\dots,n_1}(T)=\Gamma_{n_k,\dots,n_1}(\{f(T)\}_{f\in\Lambda(T)})$ is recursive, and $\Gamma_{n_k,\dots,n_1}(T)$ is a finite string of complex numbers that can be identified with an element in $\mathcal{M}$.
\end{de}

\begin{remark}[Types of towers]
One can define many types of towers, see \cite{AHS}. In this paper we write \emph{type $G$} as shorthand for a tower of \emph{general} algorithms, and \emph{type $A$} as shorthand for a tower of \emph{arithmetic} algorithms. If a tower $\{\Gamma_{n_k,n_{k-1},\dots,n_1}\}_{n_i\in\N,\ 1\leq i\leq k}$ is of type $\tau$ (where $\tau\in\{A,G\}$) then we write
	\begin{equation*}
	\{\Gamma_{n_k,n_{k-1},\dots,n_1}\}\in\tau.
	\end{equation*}
\end{remark}

\begin{remark}[Computations over the reals]
The computations in this paper are assumed to take place over the real numbers, hence the appearance of a BSS machine in Definition \ref{def:recursive}. One could attempt to adapt our results to Turing machines -- and this indeed appears to be plausible -- but that is not the purpose of the present paper.
\end{remark}

\begin{de}[SCI]
\label{def:sci}
	A computational problem $(\Om,\Lambda,\Xi,\mathcal M)$ is said to have a \emph{Solvability Complexity Index ($\SCI$)} of $k$ with respect to a tower of algorithms of type $\tau$ if $k$ is the smallest integer for which there exists a tower of algorithms of type $\tau$ of height $k$ for $(\Om,\Lambda,\Xi,\mathcal M)$. We then write \begin{align*}
 			\SCI(\Om,\Lambda,\Xi,\mathcal M)_\tau=k.
		 \end{align*}
If there exist a tower $\{\Gamma_n\}_{n\in\N}\in\tau$ and $N_1\in\N$ such that $\Xi=\Gamma_{N_1}$ then we define $\SCI(\Om,\Lambda,\Xi,\mathcal M)_\tau=0$.
\end{de}

\begin{de}[The SCI Hierarchy]
\label{1st_SCI}
The \emph{$\SCI$ Hierarchy} is a hierarchy $\{\Delta_k^\tau\}_{k\in{\N_0}}$ of classes of computational problems $(\Om,\Lambda,\Xi,\mathcal M)$, where each $\Delta_k^\tau$ is defined as the collection of all computational problems satisfying:
\setlength\tablength{0.15\textwidth}
\begin{tabbing} \hspace{0.8\tablength} \= \hspace{1.7\tablength} \= \hspace{0.6\tablength} \= \hspace{\tablength} \=
\kill
	\> $(\Om,\Lambda,\Xi,\mathcal M)\in\Delta_0^\tau$ \> $\Longleftrightarrow$ \> $\mathrm{SCI}(\Om,\Lambda,\Xi,\mathcal M)_\tau= 0,$ 
	\\[1mm]
	\> $(\Om,\Lambda,\Xi,\mathcal M)\in\Delta_{k+1}^\tau$ \>  $\Longleftrightarrow$ \>  $\mathrm{SCI}(\Om,\Lambda,\Xi,\mathcal M)_\tau\leq k,\qquad k\in\N,$
\end{tabbing}
with the special class $\Delta_1^\tau$  defined as the class of all computational problems in $\Delta_2^\tau$ with known error bounds $\eps_n$:
\begin{tabbing} \hspace{0.8\tablength} \= \hspace{1.7\tablength} \= \hspace{0.6\tablength} \= \hspace{\tablength} \= \kill
	\> $(\Om,\Lambda,\Xi,\mathcal M)\in\Delta_{1}^\tau$ \> $\Longleftrightarrow$ \>
	$\begin{array}{l}
		\exists \{\Gamma_n\}_{n\in \mathbb{N}}\in\tau,\,\exists\epsilon_n\searrow0 
		\\[1mm]
		\text{s.t.} \quad \forall T\in\Omega, \quad d(\Gamma_n(T),\Xi(T)) \leq \epsilon_n.
	\end{array}$
\end{tabbing}

Hence we have that $\Delta_0^\tau\subset\Delta_1^\tau\subset\Delta_2^\tau\subset\cdots$
\end{de}
\begin{remark}\label{rek:error}
	The definition of $\Delta_1^\tau$ above (using an arbitrary null sequence $\eps_n$) is equivalent to \cite[Def. 6.10]{AHS} where the explicit sequence $2^{-n}$ is used. In fact, given that $d(\Gamma_n(T),\Xi(T)) \leq \epsilon_n$ for some $\eps_n\searrow 0$ one can always achieve $d(\Gamma_{n_k}(T),\Xi(T)) \leq 2^{-k}$ by choosing an appropriate subsequence $n_k$.
\end{remark}
\subsection{Reformulation of Theorem \ref{thm:main1}} 
In view of the setup of Section \ref{sec:sci}, we can now reformulate Theorem \ref{thm:main1} in terms of the language of the SCI Hierarchy. To this end, we first need to define our computational problems.\\

\textbf{Computational problems.}
Fix $M>0$, $N\in\N$ and $p\in[1,+\infty]$. We consider the following \emph{primary sets} whose elements are pairs of sequences of real numbers containing the spectral data:
	\begin{align*}
		\Omega  &:= \left\{(\lambda,\alpha)\in\R^{\N_0}\times\R_+^{\N_0}\,\Big|\,\text{the expressions \eqref{eq:asymptotics} hold}\right\},
		\\
		\Omega_{0,M}  &:= \left\{(\lambda,\alpha)\in\Omega\,\Big|\, \text{in \eqref{eq:asymptotics} }\omega=0 \text{ and }\|\kappa\|_{\ell^2},\|\tilde\kappa\|_{\ell^2}\leq M\right\},
		\\
		\Omega_N  &:= \left\{(\lambda,\alpha)\in\Omega\,\Big|\,\forall n>N,\, \lambda_n=n^2,\, \alpha_n=\pi/2\right\}.
	\end{align*}
These primary sets represent, respectively, arbitrary spectral data, arbitrary spectral data with some known bounds and finite spectral data. We note that the set $\Omega_{0,M}$  includes many interesting operators, such as the case of Neumann boundary conditions ($h=H=0$) with $\int_0^\pi q=0$.\\

The \emph{evaluation set} is, naturally,  the set of individual numbers appearing in the spectral data:\footnote{See also Remark \ref{rek:trig}}
	\begin{align*}
		\Lambda &:= \{ (\lambda,\alpha)\mapsto \lambda_n\,|\, n\in\N_0 \}\cup\{ (\lambda,\alpha)\mapsto \alpha_n\,|\, n\in\N_0 \}.\\
	\end{align*}

The \emph{metric space} should contain the output, which is the potential $q$ and the boundary data $h$ and $H$. We take two different functional spaces for $q$, depending on whether the spectral data is finite (in the case of $\Omega_N$) or infinite (otherwise), hence we define two metric spaces:
	\begin{align*}
	\mathcal{M}_\mathrm{disc}&:=\mathcal{C}([0,\pi])\times\R\times\R,\\
	\mathcal{M}_p&:=W^{-1,p}([0,\pi])\times\R\times\R,
	\end{align*}
where we use the \emph{discrete metric} on $\mathcal{M}_\mathrm{disc}$, that is $d(X,Y)=1$ if $X\neq Y$ and $d(X,Y)=0$ if $X=Y$ for all $X,Y\in \mathcal{M}_\mathrm{disc}$. On $\mathcal{M}_p$, however, we use the canonical metric induced by the natural norms on $W^{-1,p}([0,\pi])$ and $\R$. \\

Finally, the \emph{problem function} is the mapping that returns $q, h$ and $H$. There are two such mappings, corresponding to the two metric spaces:
	\begin{align*}
		\Xi_\mathrm{disc} &:\Omega\to \mathcal{M}_\mathrm{disc},
		\\
		\Xi_p &:\Omega\to \mathcal{M}_p, 
	\end{align*}
and in both cases they map	
	\begin{equation*}
	(\lambda,\alpha)\mapsto(q,h,H).
	\end{equation*}
We shall abuse notation and use the same symbols for the restrictions of these mappings to subspaces of $\Omega$, such as $\Omega_N$ or $\Omega_{0,M}$.\\

\medskip
Armed with these definitions, Theorem \ref{thm:main1} can be reformulated in the following equivalent form.
\begin{theorem}\label{thm:main2}
	For any $N\in\N, M>0$ and $p\in[1,+\infty]$ the computational problems defined above are well-defined and one has
	\begin{align}
		(\Omega,\Lambda,\mathcal{M}_p,\Xi_p) &\in \Delta_2^A 
		\text{ for $p<+\infty$ and }\notin \Delta_1^G \text{ for all }p,	\tag{i}\label{eq:mainth2}
		\\
		(\Omega_{0,M},\Lambda,\mathcal{M}_p,\Xi_p) &\in \Delta_1^A,	\label{eq:mainth3}		\tag{ii}
		\\
		(\Omega_N,\Lambda,\mathcal{M}_\mathrm{disc},\Xi_\mathrm{disc}) &\in \Delta_0^A.		\tag{iii}\label{eq:mainth1}
		\\
\intertext{Moreover, as a direct consequence of  the last result we further have:}
		\big(\cup_{N\in\N}\Omega_N,\Lambda,\mathcal{M}_\mathrm{disc},\Xi_\mathrm{disc}\big) &\in \Delta_2^A. \tag{iv}\label{eq:mainth4}
	\end{align}
	In particular, the computational problem $(\Omega_N,\Lambda,\mathcal{M}_\mathrm{disc},\Xi_\mathrm{disc})$ can be solved exactly with a finite number of arithmetic operations.
\end{theorem}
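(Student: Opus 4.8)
The plan is to establish the four statements in the order (iii), (i), (ii), (iv), exactly as outlined after Theorem \ref{thm:main1}, since (i) relies on the finite-data construction of (iii) and (iv) is a formal consequence of (iii).

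\emph{Part (iii).} First I would analyse the finite-data algorithm. Given $(\lambda,\alpha)\in\Omega_N$, the kernel $F(x,y)$ is the finite sum \eqref{eq:F_finite}, which is a finite linear combination of products $\cos(\lambda_n^{1/2}x)\cos(\lambda_n^{1/2}y)$ and $\cos(nx)\cos(ny)$. The key structural observation is that $F$ is therefore \emph{degenerate} (separable): it has the form $F(x,y)=\sum_{j} u_j(x)v_j(y)$ for finitely many explicit elementary functions $u_j,v_j$. Substituting a separable ansatz $K(x,y)=\sum_j c_j(x)v_j(y)$ into the Gel'fand--Levitan--Marchenko equation \eqref{eq:integral_equation} reduces it to a \emph{finite} linear system for the coefficient functions $c_j(x)$, whose matrix entries are $\int_0^x v_i(t)u_j(t)\,dt$ — explicit elementary antiderivatives of products of cosines, evaluable in finitely many arithmetic operations (granting the trigonometric oracle of Remark on finite procedures). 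Solving this finite invertible system — invertibility is guaranteed because the inverse problem is well-posed on $\Omega_N\subset\Omega$, so $K$ exists and is unique — and then reading off $q(x)=2\frac{d}{dx}K(x,x)$ and $h=-F(0,0)$, $H=-\phi'(\pi,\lambda_n)/\phi(\pi,\lambda_n)$ via \eqref{eq:q_formula}, \eqref{eq:BCs} yields the triple $(q,h,H)$ with no limiting process. Since the target metric space $\mathcal{M}_\mathrm{disc}$ carries the discrete metric, producing the exact answer means the single algorithm $\Gamma_1=\Xi_\mathrm{disc}$ satisfies $d(\Gamma_1(T),\Xi_\mathrm{disc}(T))=0$, so the problem is in $\Delta_0^A$. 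The main obstacle here is bookkeeping: one must verify carefully that every coefficient in the linear system, and $\frac{d}{dx}K(x,x)$ itself, is a closed-form elementary expression in $x$ amenable to a BSS machine, and that no hidden limit (e.g. a transcendental equation, or an $x$-dependent matrix that degenerates) creeps in.

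\emph{Part (i).} For the positive direction $(\Omega,\Lambda,\mathcal{M}_p,\Xi_p)\in\Delta_2^A$ with $p<\infty$, I would build a two-parameter tower. The inner index truncates the spectral data at level $N$: replace $(\lambda_n,\alpha_n)$ by $(n^2,\pi/2)$ for $n>N$, obtaining an element of $\Omega_N$, and apply the part-(iii) algorithm to get $q_N\in\mathcal{C}([0,\pi])\subset W^{-1,p}$. The outer index handles the fact that the part-(iii) algorithm itself invokes the trigonometric oracle only implicitly — here, since the procedure is genuinely infinite, trigonometric evaluations can be absorbed into an extra layer of arithmetic approximation (rational approximation of $\cos$, etc.), so a second limit suffices to make everything arithmetic. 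The convergence $q_N\to q$ in $W^{-1,p}$ as $N\to\infty$ follows from continuity of the data-to-potential map in these weak topologies: one shows $F_N\to F$ uniformly on $[0,\pi]^2$ (the tail of the series defining $F$ is controlled by $\|\kappa\|_{\ell^2}$, $\|\tilde\kappa\|_{\ell^2}$ via \eqref{eq:asymptotics}), hence $K_N\to K$ in $\mathcal{C}$, hence $K_N(x,x)\to K(x,x)$ uniformly, and since $q=2\frac{d}{dx}K(x,x)$ and $q_N=2\frac{d}{dx}K_N(x,x)$, uniform convergence of the primitives gives convergence of the derivatives in $W^{-1,p}$. For the negative direction $\notin\Delta_1^G$ (for all $p$), I would exhibit two spectral data sequences in $\Omega$ that are indistinguishable on any finite subset of $\Lambda$ yet produce potentials (or boundary constants $h,H$) separated by a fixed distance in $\mathcal{M}_p$ — a standard ``two-sequence'' lower-bound argument: perturb a single high-index $\kappa_n$ or $\tilde\kappa_n$, which barely moves the $W^{-1,p}$-norm of $q$ but, crucially, changes $h=-F(0,0)$ or $H$ by a controllable amount bounded away from zero, defeating any putative error bound $\eps_n$.

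\emph{Part (ii).} This is the technical heart. On $\Omega_{0,M}$ we have $\omega=0$ and the quantitative bounds $\|\kappa\|_{\ell^2},\|\tilde\kappa\|_{\ell^2}\le M$. The goal is an \emph{a posteriori} error estimate: after truncating at level $N$ and forming $q_N$, bound $\|q-q_N\|_{W^{-1,p}}$ (and the errors in $h,H$) by an explicit quantity $\eps_N(M)\searrow 0$ computable from the data seen so far and from $M$. I would obtain this by: (a) bounding $\|F-F_N\|_{\mathcal{C}([0,\pi]^2)}$ explicitly in terms of the $\ell^2$-tails, which are $\le M$ and summable with quantitative rate — here $\omega=0$ removes the borderline non-summable $\frac{\omega}{\pi(n+1)}$ term that would otherwise only give conditional convergence; (b) a quantitative stability estimate for the GLM integral equation \eqref{eq:integral_equation} — a Neumann-series / resolvent bound showing $\|K-K_N\|_{\mathcal{C}}\le C(M)\|F-F_N\|_{\mathcal{C}}$, where $C(M)$ is explicit because $\|F\|_{\mathcal{C}}\le C'(M)$ and the Volterra structure of \eqref{eq:integral_equation} gives an unconditional bound on $(I+\mathcal{F})^{-1}$; (c) passing from $K-K_N$ in $\mathcal{C}$ to $q-q_N$ in $W^{-1,p}$ by the primitive trick as in (i), now with explicit constants. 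Along the way, making step (b) quantitative requires the promised quantitative Riesz-basis estimates — the analogue of \cite[\S1.8.5]{FreilingYurko}, stated as Proposition \ref{prop:C_Riesz} — to control the relevant operator norms uniformly over $\Omega_{0,M}$. Combining (a)--(c) yields computable $\eps_N\searrow0$ with $d(\Gamma_N(T),\Xi_p(T))\le\eps_N$ for all $T\in\Omega_{0,M}$, which is precisely membership in $\Delta_1^A$. The hard part is ensuring every constant is genuinely explicit and $M$-dependent only (not $T$-dependent in an uncontrolled way), so that $\eps_N$ is a legitimate uniform error bound.

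\emph{Part (iv).} Finally, $\big(\cup_{N}\Omega_N,\Lambda,\mathcal{M}_\mathrm{disc},\Xi_\mathrm{disc}\big)\in\Delta_2^A$ is immediate: for input $T\in\Omega_m$ (unknown $m$), the algorithm $\Gamma_N$ treats the data as if it lay in $\Omega_N$ — i.e. checks whether $\lambda_n=n^2$, $\alpha_n=\pi/2$ for $N<n\le$ (some larger threshold) and applies part (iii) at level $N$ — and once $N\ge m$ it returns the exact triple, which then stays constant in $N$; in the discrete metric, eventual exactness gives $\lim_N\Gamma_N(T)=\Xi_\mathrm{disc}(T)$, so one limit (plus the arithmetic layer from (i) for trigonometric evaluation, giving the second) suffices, placing the problem in $\Delta_2^A$.
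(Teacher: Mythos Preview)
Your overall architecture matches the paper's, and parts (iii), the lower bound in (i), and (iv) are essentially right. But there are two genuine gaps.

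\textbf{Part (i), positive direction.} You assert that $F_N\to F$ uniformly on $[0,\pi]^2$, hence $K_N\to K$ in $\mathcal C$. The paper does \emph{not} claim uniform convergence of $F_N$ in the general case $\omega\ne 0$; it cites only \emph{bounded pointwise} convergence (Levitan, Lemma 2.2.2). With $\omega\ne 0$ the tail of the series for $F$ contains a contribution of size $\sim\omega/n$ which is not absolutely summable, so your uniform-convergence shortcut is unjustified. The paper instead argues: bounded pointwise convergence of $F_N$ gives $k_N\to k$ in $L^p$ by dominated convergence; a perturbation bound on $(I-\kappa_{x,N})^{-1}$ yields $u_{x,N}\to u_x$ in $L^p([0,x])$ for each $x$; a separate pointwise estimate plus uniform boundedness of $K_N$ then gives $K_N(x,x)\to K(x,x)$ in $L^p$ via dominated convergence again. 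You would need this more careful route.

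\textbf{Part (ii), resolvent bound.} You write that ``the Volterra structure of \eqref{eq:integral_equation} gives an unconditional bound on $(I+\mathcal F)^{-1}$.'' The Gel'fand--Levitan--Marchenko equation is \emph{not} Volterra: for each fixed $x$ it is a Fredholm equation in the variable $y\in[0,x]$, and there is no free Neumann-series bound. In the paper the explicit bound on $\|(I-\kappa_x)^{-1}\|$ is obtained precisely via the quantitative Riesz-basis estimate (Proposition \ref{prop:C_Riesz}): testing the integral equation against $u_x$ yields $\sum_n|\langle u_x,g_n\rangle|^2=\langle f_x,u_x\rangle$, and the Riesz constant converts this into $\|u_x\|\le C_M\|f_x\|$. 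So the Riesz-basis work is not auxiliary bookkeeping but the actual source of the resolvent bound; your Volterra claim would not deliver it.

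\textbf{Minor point.} Your ``two-parameter tower'' and ``second limit'' language in (i) and (iv) misreads the hierarchy: $\Delta_2^A$ means $\mathrm{SCI}\le 1$, i.e.\ a \emph{single} limit. Trigonometric evaluations are folded into each $\Gamma_N$ (approximate $\cos$ to accuracy $2^{-N}$), not handled by an additional outer limit. Also, for $H$ in part (iii), the formula $-\phi'(\pi,\lambda_n)/\phi(\pi,\lambda_n)$ is not directly finite-arithmetic; the paper shows analytically (via Riemann--Lebesgue) that this equals $-K(\pi,\pi)$, which \emph{is} computable from the linear system.
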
 

\begin{remark}[Evaluating trigonometric functions]\label{rek:trig}
It is well-known that all trigonometric functions, as well as exponentials, can be computed using arithmetic operations to arbitrary precision and with known error bounds. Therefore, for results involving $\Delta_k^A$ with $k\geq1$ we can always incorporate these computations into the tower. However this cannot be done in the case of $\Delta_0^A$ results, since they only involve finitely many arithmetic computations.  In Theorem \ref{thm:main2}\eqref{eq:mainth1} and \eqref{eq:mainth4} (the proof of \eqref{eq:mainth4} follows from \eqref{eq:mainth1}) we must therefore \emph{assume} that there is an oracle which can tell us the values of trigonometric functions at any desired point.
\end{remark}
\begin{remark}[Choice of metric]
	The weak norm used in $\mathcal M_p$ is somewhat natural, given the fact that $q$ is obtained as a derivative (cf. \eqref{eq:q_formula}). While numerical results suggest that convergence might even hold in a strong sense (cf. Section \ref{sec:numerics}), a proof would be highly nontrivial and beyond the scope of this article. A starting point might be to differentiate eq. \eqref{eq:integral_equation_difference} and estimate all newly obtained terms. Under stronger \emph{a priori} assumptions on $q$ it can be shown that convergence in $H^s$ for $s>0$ can be obtained \cite{MR2158108, MR2768563}. Note, however, the different choice of boundary conditions therein (Dirichlet vs. Neumann).
\end{remark}
The result \eqref{eq:mainth4} above follows from the result \eqref{eq:mainth1} quite easily, so we provide the short proof already here. We note that the number $N$ is not needed as input for the algorithm in \eqref{eq:mainth4}.
\begin{proof}[Proof of \eqref{eq:mainth1}$\Rightarrow$\eqref{eq:mainth4}]
	Given $N\in\N$, denote by $\Gamma_N^{\text{fin}}:\Omega_N\to {\mathcal M}_{\text{disc}}$ any $\Delta_0^A$ algorithm that computes $q,h,H$ exactly from $(\lambda,\alpha)\in\Omega_N$. Such an algorithm exists by Theorem \ref{thm:main2}\eqref{eq:mainth1}. Now let $(\lambda,\alpha)\in\bigcup_{N\in\N}\Omega_N$. We define
	\begin{align*}
		\Gamma_N(\lambda,\alpha) := \Gamma_N^{\text{fin}}\Big(\{\lambda_n\}_{n=0}^{N-1}\cup\{n^2\}_{n=N}^\infty,\{\alpha_n\}_{n=0}^{N-1}\cup\{\nicefrac\pi2\}_{n=N}^\infty\Big)
	\end{align*}
	By definition of $\bigcup_N\Omega_N$ there exists $N_0\in\N$ such that $(\lambda,\alpha)\in\Omega_{N_0}$. Therefore for all $N\geq N_0$ we have
	\begin{align*}
		\Gamma_N(\lambda,\alpha) &:= \Gamma_{N}^{\text{fin}}(\lambda,\alpha)
		\\
		&= \Gamma_{N_0}^{\text{fin}}(\lambda,\alpha).
	\end{align*}
 Thus, the sequence $\{\Gamma_N(\lambda,\alpha)\}_{N\in\N}$ is eventually constant and therefore convergent in $\mathcal M_{\mathrm{disc}}$.
\end{proof}
It remains to prove Theorem \ref{thm:main2}\eqref{eq:mainth2}-\eqref{eq:mainth1}. This is done in the sequel.

\section{Proof of Theorem \ref{thm:main2}\eqref{eq:mainth1}: finite spectral data}
\label{sec:finite}
In this section we prove Theorem \ref{thm:main2}\eqref{eq:mainth1} dealing with the case of finite spectral data. Following immediately from the finite sum expression \eqref{eq:F_finite} for $F$, for any $x,y\in[0,\pi]$ the values $F(x,y)$ can be computed using a finite number of arithmetic operations. \emph{We refer the reader to Remark \ref{rek:trig} regarding the evaluation of trigonometric functions.}\\

The first step of the proof is to compute $K(x,y)$ by solving the integral equation \eqref{eq:integral_equation}. To this end, we consider $x$ as a fixed parameter and solve \eqref{eq:integral_equation} as an equation in $y$. Let us introduce the following notation.
\begin{notation}\label{notation}
Define
\begin{itemize}
	\item Right-hand side: $f_x(y) := -F(x,y)$,
	\item Solution: $u_x(y) := K(x,y)$, 
	\item Integral kernel: $k(t,s) := -F(s,t)$.
\end{itemize}
\end{notation}
This transforms \eqref{eq:integral_equation} into the more familiar-looking form
\begin{align}\label{eq:integral_equation2}
	u_x(y) - \int_0^x k(y,s)u_x(s)\,ds = f_x(y).
\end{align}
This is a Fredholm integral equation of the second kind whose kernel is of the form
\begin{align}\label{eq:k_degenerate}
	k(t,s) = \sum_{i=0}^{2N+1} A_i(t)B_i(s)
\end{align}
(cf. \eqref{eq:F_finite}). A concrete choice of $A_i, B_i$ that satisfy \eqref{eq:k_degenerate} is given by
\begin{align}\label{eq:AiBi}
\begin{cases}
	A_i(s) = -B_i(s) = \alpha_i^{-\f12}\cos(\lambda_i^{\f12}s) & \text{ for }i=0,\dots,N,
	\\
	A_i(s) = B_i(s) = \big(\f2\pi\big)^{\f12}\cos((i-N)s) & \text{ for } i=N+1,\dots,2N,
	\\
	A_{2N+1}(s) = B_{2N+1}(s) \equiv \pi^{-\f12}. &
\end{cases}
\end{align}
As detailed in \cite[Ch. 2]{Atkinson}, making the ansatz $u_x(y) = f_x(y)+\sum_{i=0}^{2N+1} c_iA_i(y)$ and plugging it into \eqref{eq:integral_equation2} yields the  finite linear system
\begin{align}\label{eq:linear_system}
	c_i - \sum_{j=0}^{2N+1} \langle A_j,B_i\rangle c_j = \langle f_x,B_i\rangle,
\end{align}
where
\begin{equation*}
	\langle A_j,B_i\rangle = \int_0^x A_j(s)B_i(s)\,ds
\quad\text{and}\quad
	\langle f_x, B_i\rangle = \int_0^x f_x(s)B_i(s)\,ds.
\end{equation*}
\begin{remark}\label{remark:AiBi_analytical}
	We note that both integrals can be calculated \emph{analytically} using \eqref{eq:AiBi} and elementary rules for integration. The result of these integrations will always be a polynomial of degree 2 in $x$, $\cos(\lambda_j x)$, $\cos(j x)$, $\sin(\lambda_j x)$, $\sin(j x)$ for $j=0,\dots,N$, whose coefficients can be computed from the $\lambda_n$, $\alpha_n$.
\end{remark}
Next, we apply two classical results to show that \eqref{eq:linear_system} is uniquely solveble. To simplify notation we denote the $(2N+2)\times(2N+2)$ matrix with entries $\langle A_j,B_i\rangle$ by $\cA$, the vector with entries $\langle f_x, B_i\rangle$ by $\cb$ and the identity matrix by $I$. The linear system \eqref{eq:linear_system} becomes
	\begin{equation*}
	(I-\cA)c=\cb.
	\end{equation*}
We note that a similar approach to inverse problems has been used in \cite{McLaughlin-Handelman}, however not in the context of the SCI Hierarchy.
To avoid confusion in the sequel, we introduce the integral operator $\kappa_x:L^2([0,x])\to L^2([0,x])$ defined as
\begin{equation}\label{eq:int-ker}
	(\kappa_x u)(t) := \int_0^x k(t,s)u(s)\,ds.
\end{equation}
\begin{lemma}\label{lemma:system_solvable}
	The matrix $I-\cA$ is invertible, and hence the system \eqref{eq:linear_system} has a unique solution for every $x\in[0,\pi]$.
\end{lemma}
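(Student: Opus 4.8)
The plan is to establish the invertibility of the matrix $I-\cA$ by combining two classical ingredients: the theory of Fredholm integral equations with degenerate kernels, which reduces the question to the injectivity of the integral operator $I-\kappa_x$ on $L^2([0,x])$, and the classical solvability theory for the Gel'fand--Levitan--Marchenko equation, which supplies that injectivity for every $x\in[0,\pi]$; invertibility of the square matrix $I-\cA$ then gives the unique solvability of $(I-\cA)c=\cb$ immediately.

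First I would carry out the reduction. Since the kernel $k$ in \eqref{eq:k_degenerate} is degenerate, $\kappa_x$ has rank at most $2N+2$, so by the Fredholm alternative $I-\kappa_x$ is boundedly invertible on $L^2([0,x])$ exactly when it is injective. Moreover the ansatz that led to \eqref{eq:linear_system} identifies the two null spaces: given $d\in\ker(I-\cA)$ the function $u:=\sum_{i}d_iA_i$ satisfies $(I-\kappa_x)u=0$, while conversely every $u\in\ker(I-\kappa_x)$ equals $\sum_i\langle B_i,u\rangle A_i$ with $\big(\langle B_i,u\rangle\big)_i\in\ker(I-\cA)$, and these two assignments are mutually inverse (cf. \cite[Ch.~2]{Atkinson}). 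Hence it suffices to show $\ker(I-\kappa_x)=\{0\}$ for every $x\in[0,\pi]$; the case $x=0$ is trivial since there $\cA=0$.

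For the injectivity I would use that $k(t,s)=-F(s,t)=-F(t,s)$ (the kernel $F$ of \eqref{eq:F_def} is real-valued and symmetric), so $I-\kappa_x=I+F_x$, where $F_x$ is the self-adjoint Hilbert--Schmidt operator on $L^2([0,x])$ with kernel $F$ --- bounded, since $F$ is continuous by \cite[Lemma~1.5.4]{FreilingYurko}. The homogeneous equation $(I+F_x)u=0$ is precisely the homogeneous GLM equation on $[0,x]$, and the classical reconstruction theory under hypothesis \eqref{eq:asymptotics} (\cite[Ch.~2]{Levitan}, \cite[\S1.5]{FreilingYurko}) guarantees that it has only the trivial solution for each $x\in[0,\pi]$, i.e.\ that $I+F_x$ is boundedly invertible; this finishes the proof. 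Should one want a self-contained argument, one can instead prove directly that $I+F_\pi$ is \emph{strictly positive} on $L^2([0,\pi])$: extending a null vector of $I+F_x$ by zero reduces the claim to positivity of $I+F_\pi$, and inserting \eqref{eq:F_def} together with Parseval's identity for the orthogonal cosine basis $\{\cos(n\cdot)\}_{n\in\N_0}$ makes the free part cancel and leaves $\langle(I+F_\pi)f,f\rangle=\sum_{n\in\N_0}\alpha_n^{-1}|a_n(f)|^2\ge0$ with $a_n(f):=\int_0^\pi f(x)\cos(\lambda_n^{1/2}x)\,dx$; this vanishes only when $f\perp\cos(\lambda_n^{1/2}\cdot)$ for all $n$, hence $f=0$ by completeness of $\{\cos(\lambda_n^{1/2}\cdot)\}_{n\in\N_0}$ in $L^2([0,\pi])$.

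I expect the genuine obstacle to be exactly that completeness (equivalently, the injectivity of $I+F_x$): the degenerate-kernel reduction, the Fredholm alternative, and the zero-extension are routine, whereas completeness of $\{\cos(\lambda_n^{1/2}\cdot)\}$ rests on the asymptotics $\lambda_n^{1/2}=n+\mathcal{O}(1/n)$ from \eqref{eq:asymptotics} and is most cleanly derived from the quantitative Riesz-basis estimate of Proposition \ref{prop:C_Riesz} (a sharpening of \cite[\S1.8.5]{FreilingYurko}). A minor technical point along the self-contained route is the justification of the term-by-term manipulation of the series \eqref{eq:F_def} underneath the double integral, which follows once one knows that this series converges in $L^2([0,\pi]^2)$.
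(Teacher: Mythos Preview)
Your proposal is correct and follows essentially the same approach as the paper: reduce invertibility of $I-\cA$ to invertibility of $I-\kappa_x$ on $L^2([0,x])$ via the degenerate-kernel theory in \cite[Ch.~2]{Atkinson}, and then invoke the classical GLM solvability result \cite[Th.~2.3.1]{Levitan}. The paper's proof consists of exactly these two citations; your version supplies the details of the null-space correspondence and adds an optional self-contained positivity argument, but the core route is the same.
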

\begin{proof}
By \cite[Th. 2.1.2]{Atkinson} the system \eqref{eq:linear_system} is nonsingular if the operator $\mathrm{Id}-\kappa_x : L^2([0,x])\to L^2([0,x])$ is invertible, and by \cite[Th. 2.3.1]{Levitan} this operator  is indeed invertible for all $x\in[0,\pi]$.
\end{proof}
\begin{lemma}\label{lemma:cj_rational}
	The solutions $(c_0,\dots,c_{2N+1})$ of \eqref{eq:linear_system} are rational functions of degree $4N+6$ in $x$, $\cos(\lambda_j x)$, $\cos(j x)$, $\sin(\lambda_j x)$, $\sin(j x)$ for $j=0,\dots,N$. They can be computed symbolically in finitely many arithmetic operations from $\{\lambda_n\}_{n=0}^N\cup\{\alpha_n\}_{n=0}^N$.
\end{lemma}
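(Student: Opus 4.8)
The plan is to solve the explicit finite linear system $(I-\cA)c=\cb$ by Cramer's rule and then track the degrees and the arithmetic cost through the determinant expansions. First I would record that Cramer's rule is legitimate here: by Lemma~\ref{lemma:system_solvable} the matrix $I-\cA$ is invertible for every $x\in[0,\pi]$, so the polynomial $\det(I-\cA)$ --- viewed as a polynomial in the quantities $x,\cos(\lambda_j^{\f12}x),\cos(jx),\sin(\lambda_j^{\f12}x),\sin(jx)$, $j=0,\dots,N$ --- is not identically zero and, more importantly, never vanishes on $[0,\pi]$. Hence for each $i=0,\dots,2N+1$ one has $c_i=\det(M_i)/\det(I-\cA)$, where $M_i$ denotes the matrix obtained from $I-\cA$ by replacing its $i$-th column with $\cb$.

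Next I would feed in Remark~\ref{remark:AiBi_analytical}: every entry of $\cA$ and of $\cb$ can be evaluated in closed form by elementary integration, and each is a polynomial in the quantities above --- of degree at most $2$ for the entries $\langle A_j,B_i\rangle$ of $\cA$, and at most $3$ for the entries $\langle f_x,B_i\rangle$ of $\cb$ (the extra degree coming from the $x$-dependent factor in $F(x,s)$) --- whose coefficients are assembled from the finite data $\{\lambda_n\}_{n=0}^N\cup\{\alpha_n\}_{n=0}^N$ by finitely many field operations. Concretely, product-to-sum and angle-addition identities turn each $\int_0^x\cos(\mu s)\cos(\nu s)\,ds$, with $\mu,\nu$ ranging over the frequencies $\lambda_j^{\f12}$ and $0,1,\dots,N$, into such a polynomial; the only numbers produced along the way are the constants $\tfrac{1}{2(\mu\pm\nu)}$, together with the $\tfrac{x}{2}$-term when $\mu=\nu$. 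Since each of $\det(M_i)$ and $\det(I-\cA)$ is a fixed finite sum of products of $2N+2$ entries, each of bounded degree, expanding them --- a fixed finite sequence of additions and multiplications --- yields polynomials in the same quantities of degree $O(N)$, at most $4N+6$ as claimed, again with coefficients obtained arithmetically from the data. Dividing the two gives the stated rational expression for $c_i$, and running over the $2N+2$ values of $i$ finishes the proof.

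The one genuinely delicate point, and the one I would treat carefully, is what ``arithmetic'' means at the level of the coefficients. The symmetric decomposition \eqref{eq:AiBi} introduces half-integer powers $\alpha_i^{\mp1/2}$, and the frequencies $\lambda_j^{\f12}$ enter the integration constants; neither is a field operation on the raw data. I would dispose of the first by noting that these $\alpha_i^{\pm1/2}$ are immaterial for what follows --- they cancel in the combinations $c_iA_i$ that reassemble $K(x,y)$ in the ansatz $u_x=f_x+\sum_i c_iA_i$ --- and that in any case one is free to replace \eqref{eq:AiBi} by the equally valid degenerate-kernel decomposition $A_i(s)=-\alpha_i^{-1}\cos(\lambda_i^{\f12}s)$, $B_i(s)=\cos(\lambda_i^{\f12}s)$ for $i=0,\dots,N$, after which $\cA$, $\cb$ and the $c_i$ depend on the $\alpha_n$ only through $\alpha_n^{-1}$. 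The frequencies $\lambda_j^{\f12}$ and the associated trigonometric values I would take as supplied by the oracle of Remark~\ref{rek:trig}, which is in any case unavoidable for a $\Delta_0^A$ statement of this type. With those conventions fixed, the whole procedure is manifestly finite and uses only $+,-,\times,\div$ on the given numbers, which is exactly the assertion.
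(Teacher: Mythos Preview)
Your proposal is correct and follows essentially the same approach as the paper: solve $(I-\cA)c=\cb$ via Cramer's rule (equivalently, the paper's adjugate formula $M^{-1}=\det(M)^{-1}\operatorname{adj}(M)$), feed in Remark~\ref{remark:AiBi_analytical} for the polynomial form of the entries, and track degrees through the $(2N+2)\times(2N+2)$ determinant expansions. Your extra care about the half-integer powers $\alpha_i^{\pm 1/2}$ and the frequencies $\lambda_j^{1/2}$ --- and the observation that one can rescale the degenerate-kernel decomposition to avoid the former --- is a genuine point that the paper leaves implicit under the blanket appeal to the trigonometric oracle of Remark~\ref{rek:trig}.
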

\begin{proof}
	By Lemma \ref{lemma:system_solvable} the system \eqref{eq:linear_system} is solvable and its solution is given by $(I-\cA)^{-1}\cb$. By Remark \ref{remark:AiBi_analytical} the entries of $\cA,\cb$ can be calculated explicitly from $\{\lambda_n\}_{n=0}^N\cup\{\alpha_n\}_{n=0}^N$ as polynomials of degree 2 in $x, \cos(\dots), \sin(\dots)$. But the entries of $(I-\cA)^{-1}$ can  be calculated in finitely many arithmetic operations from the entries of $\cA$ by the formula
	\begin{align}\label{eq:inverse_matrix}
		M^{-1} = \f{1}{\det(M)}\operatorname{adj}(M),
	\end{align}
	where $\operatorname{adj}(M)=(-1)^{i+j}[M]_{ij}$ and $[M]_{ij}$ denotes the $ij$-th minor of $M$. The result is a rational function in $x, \cos(\dots), \sin(\dots)$ of degree $4N+4$.
	Finally, the product $(I-\cA)^{-1}\cb$ is computed in finitely many operations on the entries of $(I-\cA)^{-1}$ and $\cb$ and yields a rational function of degree $4N+6$.
Hence, each $c_i$ has a representation as a rational function in $x$, $\cos(\lambda_j x)$, $\cos(j x)$, $\sin(\lambda_j x)$, $\sin(j x)$ for $j=0,\dots,N$.
\end{proof}
\begin{remark}
	Note that the denominator in the rational function expression for $c_j$ is never zero. Indeed, by \eqref{eq:inverse_matrix} the denominator is given by $\det(I-\cA)$, which is guaranteed to be nonzero for all $x\in[0,\pi]$ by Lemma \ref{lemma:system_solvable}.
\end{remark}
To emphasize the dependence of the $c_j$ on $x$, we will sometimes write $c_j=c_j(x)$ in the following. Having obtained a computable solution $(c_0,\dots,c_{2N+1})$ of \eqref{eq:linear_system}, we recall our ansatz for $u_x$:
\begin{align}\label{eq:u_formula}
	u_x(y) = f_x(y) + \sum_{i=0}^{2N+1} c_i(x)A_i(y).
\end{align}
By construction $u_x$ satisfies \eqref{eq:integral_equation2} for every $x\in[0,\pi]$. By Lemma \ref{lemma:cj_rational} and \eqref{eq:AiBi} the right hand side of \eqref{eq:u_formula} is given symbolically as a rational function in $x$, $\cos(\lambda_j x)$, $\cos(j x)$, $\sin(\lambda_j x)$, $\sin(j x)$ for $j=0,\dots,N$, and likewise for $y$. In particular, the derivative 
\begin{align*}
	q(x) &:= 2\f{d}{dx}u_x(x) 
	\\
	&= 2\f{d}{dx}K(x,x)
\end{align*}
is a rational function again and can be computed symbolically as a function of $x$. Moreover, once $K(x,y) = u_x(y)$ has been computed, the boundary conditions $h,H$ can be reconstructed using \eqref{eq:BCs}. Indeed, $h$ is given by
\begin{align*}
	h = K(0,0),
\end{align*}
and we claim that $H$ is given by
\begin{align*}
	H = -K(\pi,\pi).
\end{align*}
To compute $H$, recall that for $n>N$ one has
\begin{align*}
	H = -\f{\phi'(\pi,n^2)}{\phi(\pi,n^2)}.
\end{align*}
These can be computed as follows:
\begin{align*}
	\phi(\pi,n^2) &= \cos(n\pi) + \int_0^\pi K(\pi,t)\cos(nt)\,dt
	\\
	&= (-1)^n + \int_0^\pi K(\pi,t)\cos(nt)\,dt,
	\\
\end{align*}
and
\begin{align*}
	\phi'(\pi,n^2) &= -n\sin(n\pi) + K(\pi,\pi)\cos(n\pi) + \int_0^\pi \del_x K(\pi,t)\cos(nt)\,dt
	\\
	&= (-1)^nK(\pi,\pi) + \int_0^\pi \del_x K(\pi,t)\cos(nt)\,dt.
\end{align*}
Since the quotient $\f{\phi'(\pi,n^2)}{\phi(\pi,n^2)}$ is independent of $n$ by \cite[Th. 2.10.5]{Levitan}, one has
\begin{align*}
	H = -\f{\phi'(\pi,n^2)}{\phi(\pi,n^2)} = -\lim_{n\to\infty} \f{\phi'(\pi,n^2)}{\phi(\pi,n^2)}.
\end{align*}
Moreover, by the Riemann-Lebesgue Lemma one has
\begin{align*}
	&\int_0^\pi K(\pi,t)\cos(nt)\,dt \xrightarrow{n\to\infty} 0,
	\\
	&\int_0^\pi \del_x K(\pi,t)\cos(nt)\,dt \xrightarrow{n\to\infty} 0.
\end{align*}
We therefore conclude that
\begin{align}
	H &= -\lim_{n\to\infty} \f{(-1)^nK(\pi,\pi) + \int_0^\pi \del_x K(\pi,t)\cos(nt)\,dt}{(-1)^n + \int_0^\pi K(\pi,t)\cos(nt)\,dt}
	\nonumber
	\\
	&= -K(\pi,\pi).
	\label{eq:H_finite_data}
\end{align}
Thus, $H$ can be computed in finitely many arithmeric operations and the proof is complete.
\qed

\section{Proof of Theorem \ref{thm:main2}\eqref{eq:mainth2}}
There are two distinct aspects to the proof. To prove the $\Delta_2^A$ result we must demonstrate that there exists an arithmetic algorithm which computes $q, h$ and $H$ in one limit.  To prove the $\notin \Delta_1^G$ result we  construct a counterexample which shows that error control cannot possibly hold. We begin by stating a classical lemma, which will be used multiple times in the sequel.
\begin{lemma}\label{lemma:T_inverse_bound}
	Let $\mathcal{B}$ be a Banach space. If $A,T:\mathcal{B}\to \mathcal{B}$ are bounded and invertible and $\|A-T\|<\|A^{-1}\|^{-1}$, then 
	\begin{align}\label{eq:T_inverse_bound}
		\|T^{-1}\| &\leq \f{\|A^{-1}\|}{1-\|A^{-1}\| \|A-T\|}.
	\end{align}
\end{lemma}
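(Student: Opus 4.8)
The plan is to prove the classical Neumann-series perturbation bound for the inverse of a bounded operator. First I would observe that it suffices to show $T$ is invertible (which is in fact hypothesised, but the estimate can be derived cleanly by factoring $T$ through $A$) and then bound the norm of $T^{-1}$. Write $T = A - (A-T) = A\big(I - A^{-1}(A-T)\big)$, and set $R := A^{-1}(A-T)$. From submultiplicativity of the operator norm we get $\|R\| \le \|A^{-1}\|\,\|A-T\| < 1$ by hypothesis, so $I - R$ is invertible with $(I-R)^{-1} = \sum_{k=0}^\infty R^k$ (the Neumann series, which converges absolutely in operator norm because $\|R\|<1$), and $\|(I-R)^{-1}\| \le \sum_{k=0}^\infty \|R\|^k = (1-\|R\|)^{-1}$.

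Next I would combine the factorisation with this bound. Since $T = A(I-R)$ and both factors are invertible, $T^{-1} = (I-R)^{-1}A^{-1}$, hence
\begin{align*}
\|T^{-1}\| \le \|(I-R)^{-1}\|\,\|A^{-1}\| \le \f{\|A^{-1}\|}{1-\|R\|} \le \f{\|A^{-1}\|}{1-\|A^{-1}\|\,\|A-T\|},
\end{align*}
where the last inequality uses $\|R\| \le \|A^{-1}\|\,\|A-T\|$ together with the fact that $t \mapsto \f{c}{1-t}$ is increasing on $[0,1)$. This is exactly \eqref{eq:T_inverse_bound}.

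There is essentially no main obstacle here — this is a textbook fact. The only minor point worth stating carefully is that the hypothesis $\|A-T\|<\|A^{-1}\|^{-1}$ guarantees $\|R\|<1$ strictly, which is what makes the geometric series converge and keeps the denominator $1-\|A^{-1}\|\,\|A-T\|$ strictly positive, so the right-hand side is finite and well-defined. One could alternatively phrase the argument so that the invertibility of $T$ is \emph{deduced} rather than assumed, by noting that $A(I-R)$ is a product of invertibles; I would include that remark since it makes the lemma slightly more self-contained, though it is not needed for the applications in the sequel.
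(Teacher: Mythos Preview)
Your argument is correct and is precisely the standard Neumann-series derivation; the paper itself does not spell out a proof but simply cites the classical reference \cite[Ch.~I, Eq.~(4.24)]{Kato}, which contains exactly this computation. Your write-up is therefore fully consistent with the paper's treatment, and your parenthetical remark that invertibility of $T$ can be deduced rather than assumed is a nice bonus.
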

\begin{proof}
	This is classical, see for example \cite[Ch. I, Eq. (4.24)]{Kato}.
\end{proof}

\subsection{Proof of $(\Omega,\Lambda,\mathcal{M}_p,\Xi_p)\in \Delta_2^A$}
This proof follows a simple trajectory: we show that by letting $N\to+\infty$ in Theorem \ref{thm:main2}\eqref{eq:mainth1} we obtain the desired result. To this end we first introduce some useful notation. For $(\lambda,\alpha)\in\Omega\subset\R^{N_0}\times\R_+^{N_0}$ and $N\in\N$ define
	\begin{align*}
	\lambda_{n,N}
	:=
	\begin{cases}
	\lambda_n & n\leq N\\
	n^2 & n> N,
	\end{cases}\\
	\alpha_{n,N}
	:=
	\begin{cases}
	\alpha_n & n\leq N\\
	\f\pi2 & n> N,
	\end{cases}
	\end{align*}
to be the input of the finite data algorithm in the proof of Theorem \ref{thm:main2}\eqref{eq:mainth1}. We  write $F_N$ for the function defined in \eqref{eq:F_finite} (finite sum) and $F$ for the function defined in \eqref{eq:F_def} (infinite sum). Accordingly, we let $K_N$ denote the solution of \eqref{eq:integral_equation} with $F_N$ in lieu of $F$, i.e.
\begin{equation*}
	K_N(x,y) + F_N(x,y) + \int_0^x K_N(x,t)F_N(t,y)\,dt = 0,
\end{equation*}
 and $K$ denote the solution of \eqref{eq:integral_equation} with $F$ as before.
By \cite[Lemma 2.2.2]{Levitan} one has $F_N(x,y)\to F(x,y)$ for all $x,y\in[0,\pi]$ and $\|F_N\|_\infty<C$ for all $N\in\N$. In analogy with  Notation \ref{notation} and \eqref{eq:int-ker} we define
\begin{notation}\label{notation2}
	Quantities for $N=+\infty$:
	\begin{itemize}
		\item $f_x(y) := -F(x,y)$,
		\item $u_x(y) := K(x,y)$, 
		\item $k(t,s) := -F(s,t)$,
		\item $(\kappa_x u)(t) := \int_0^x k(t,s)u(s)\,ds$.
	\end{itemize}
	Quantities for $N$ finite:
	\begin{itemize}
		\item $f_{x,N}(y) := -F_N(x,y)$,
		\item $u_{x,N}(y) := K_N(x,y)$, 
		\item $k_N(t,s) := -F_N(s,t)$,
		\item $(\kappa_{x,N} u)(t) := \int_0^x k_N(t,s)u(s)\,ds$.
	\end{itemize}
\end{notation}
\begin{lemma}
	\label{lemma:k_convergence}
	For every $p\in[1,+\infty)$ one has
	\begin{enumi}
		\item $k_N\to k$ in $L^p([0,\pi]^2)$,
		\item $k_N(s,\cdot)\to k(s,\cdot)$ in $L^p([0,\pi])$ for every $s\in[0,\pi]$,
		\item $f_{x,N}\to f_x$ in $L^p([0,\pi])$ for every $x\in[0,\pi]$.
	\end{enumi}
\end{lemma}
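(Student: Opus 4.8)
The plan is to obtain all three assertions from a single input: by \cite[Lemma 2.2.2]{Levitan} the functions $F_N$ converge to $F$ pointwise on $[0,\pi]^2$ and obey a uniform bound $\|F_N\|_\infty<C$. Since every domain appearing in the statement has finite measure, the constant $(2C)^p$ is an integrable majorant, and each of (i)--(iii) reduces to an application of the Dominated Convergence Theorem.

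First I would dispense with the change of variables. From $k_N(t,s)=-F_N(s,t)$ and $k(t,s)=-F(s,t)$, the coordinate swap $(t,s)\mapsto(s,t)$ is a measure-preserving bijection of $[0,\pi]^2$, so $\|k_N-k\|_{L^p([0,\pi]^2)}=\|F_N-F\|_{L^p([0,\pi]^2)}$; similarly, for fixed $s$ the slice $k_N(s,\cdot)-k(s,\cdot)$ is the map $t\mapsto -(F_N(t,s)-F(t,s))$ on $[0,\pi]$, and $f_{x,N}-f_x=-(F_N(x,\cdot)-F(x,\cdot))$ on $[0,\pi]$. Hence it suffices to prove the corresponding $L^p$-convergence statements for $F_N$ itself.

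For (i): the integrand $|F_N(x,y)-F(x,y)|^p$ tends to $0$ at every point of $[0,\pi]^2$ and is dominated by $(2C)^p\in L^1([0,\pi]^2)$, so Dominated Convergence yields $\|F_N-F\|_{L^p([0,\pi]^2)}\to 0$. Parts (ii) and (iii) are the same argument one dimension down: for fixed $s$ the sequence $y\mapsto|F_N(y,s)-F(y,s)|^p$ and for fixed $x$ the sequence $y\mapsto|F_N(x,y)-F(x,y)|^p$ both converge pointwise to $0$ on $[0,\pi]$ and are dominated by the constant $(2C)^p$, so Dominated Convergence on $[0,\pi]$ finishes the proof.

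I do not expect a genuine obstacle here; the only points to keep explicit are that \cite[Lemma 2.2.2]{Levitan} supplies \emph{both} the uniform $\|F_N\|_\infty$ bound \emph{and} the pointwise convergence, and that finiteness of $|[0,\pi]|$ and $|[0,\pi]^2|$ is what turns the $L^\infty$ bound into an $L^p$ majorant for every $p<\infty$. The hypothesis $p<+\infty$ is essential: $F_N\to F$ is not claimed to be uniform, so one cannot expect convergence in $L^\infty$, which is exactly why the metric spaces $\mathcal M_p$ are used with $p$ finite.
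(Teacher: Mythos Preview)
Your proposal is correct and follows essentially the same approach as the paper: the paper's proof is a single sentence invoking the bounded pointwise convergence $F_N\to F$ from \cite[Lemma 2.2.2]{Levitan} together with the dominated convergence theorem, which is precisely what you have spelled out in detail.
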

\begin{proof}
	Follows immediately from the bounded convergence $F_N(x,y)\to F(x,y)$  (cf. \cite[Lemma 2.2.2]{Levitan}) and the dominated convergence theorem.
\end{proof}
\begin{lemma}
	\label{lemma:k_resolvent_bounded}
	The operator $(I-\kappa_x)^{-1}$ is bounded from $L^p([0,x])$ to $L^p([0,x])$ for all $x\in[0,\pi]$ and all $p\in[2,+\infty]$.
\end{lemma}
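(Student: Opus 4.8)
The plan is to bootstrap from the known invertibility of $I-\kappa_x$ on $L^2([0,x])$ to the full range $p\in[2,\infty]$ using the degenerate-kernel structure and the continuity (hence boundedness) of $F$ on $[0,\pi]^2$. First I would recall that by \cite[Th. 2.3.1]{Levitan} (already invoked in the proof of Lemma \ref{lemma:system_solvable}) the operator $I-\kappa_x$ is invertible on $L^2([0,x])$ for every $x\in[0,\pi]$, so $(I-\kappa_x)^{-1}$ is bounded on $L^2([0,x])$. Since $F$ is continuous on the compact set $[0,\pi]^2$, the kernel $k(t,s)=-F(s,t)$ is bounded, say $\|k\|_\infty\le C_F$; this immediately gives that $\kappa_x:L^p([0,x])\to L^q([0,x])$ is bounded for any $1\le p,q\le\infty$ with norm $\le C_F\cdot x^{1-1/p+\ldots}$ (a Schur/Hölder estimate), and in particular $\kappa_x$ is bounded $L^2\to L^\infty$ and $L^\infty\to L^\infty$.

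The key step is the standard "$I-\kappa_x$ invertible on one space $\Rightarrow$ invertible on a nested space" argument. Fix $p\in[2,\infty]$ and let $u\in L^p([0,x])\subset L^2([0,x])$ (using $x\le\pi<\infty$, so $L^p\hookrightarrow L^2$ for $p\ge2$); set $v:=(I-\kappa_x)^{-1}u\in L^2([0,x])$. Then $v=u+\kappa_x v$, and since $u\in L^p$ and $\kappa_x v\in L^\infty$ (because $\kappa_x$ maps $L^2$ into $L^\infty$ by the bounded-kernel estimate), we get $v\in L^p([0,x])$. This shows $(I-\kappa_x)^{-1}$ maps $L^p$ into itself; boundedness then follows either from the closed graph theorem, or more explicitly from the estimate $\|v\|_{L^p}\le\|u\|_{L^p}+\|\kappa_x v\|_{L^\infty}\le\|u\|_{L^p}+C_F\,x^{1/2}\,\|v\|_{L^2}\le\|u\|_{L^p}+C_F\,x^{1/2}\,\|(I-\kappa_x)^{-1}\|_{L^2\to L^2}\,\|u\|_{L^2}$, and $\|u\|_{L^2}\le x^{1/2-1/p}\|u\|_{L^p}$ (with the obvious reading when $p=\infty$).

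The main obstacle — really the only subtle point — is keeping the bounds uniform, or at least finite, as $x$ ranges over $[0,\pi]$: one needs that $\sup_{x\in[0,\pi]}\|(I-\kappa_x)^{-1}\|_{L^2([0,x])\to L^2([0,x])}<\infty$. This is not automatic from pointwise invertibility, but it follows from \cite[Th. 2.3.1]{Levitan} together with a continuity/compactness argument: the map $x\mapsto\|(I-\kappa_x)^{-1}\|$ is continuous on $[0,\pi]$ (by Lemma \ref{lemma:T_inverse_bound} and the continuity of $x\mapsto\kappa_x$ in operator norm, which uses the boundedness of $F$), hence bounded on the compact interval. If one only needs boundedness for each fixed $x$ — which is what the statement literally asserts — this uniformity is not even required and the argument above suffices verbatim. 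I would state the lemma and proof at the fixed-$x$ level, and remark that the constant can be taken locally uniform in $x$ for later use.
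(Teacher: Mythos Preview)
Your argument is correct and essentially identical to the paper's: both start from the $L^2$ invertibility of $I-\kappa_x$ (via \cite[Th.~2.3.1]{Levitan}), use boundedness of the kernel $k$ to show $\kappa_x$ maps $L^2$ into $L^\infty$, and then bootstrap via $v=u+\kappa_x v$ together with the H\"older embedding $L^p\hookrightarrow L^2$ to obtain the explicit $L^p$ bound. Two small remarks: the ``degenerate-kernel structure'' you mention is a red herring here (the full $k$ coming from $F$ is \emph{not} degenerate---only the truncated $k_N$ is) and is never used in your actual argument; and the uniformity-in-$x$ that you anticipate is handled by the paper as a separate corollary (Corollary~\ref{cor:sup_bounded}) rather than as part of this lemma.
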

\begin{proof}
	Fix $x\in[0,\pi]$. By \cite[Th. 2.3.1]{Levitan} the operator $(I-\kappa_x)^{-1}$ exists as an operator from $L^2([0,x])$ to $L^2([0,x])$. Boundedness follows from the open mapping theorem.
Now let $f\in L^p([0,x])$ with $p\in [2,+\infty]$. Then by H\"older $f$ also belongs to  $L^2([0,x])$. Hence by the above, there exists a solution $u_x\in L^2([0,x])$, i.e.
	\begin{align*}
		u_x - \int_0^x k(\cdot,s)u_x(s)\,ds = f_x.
	\end{align*}
	Using H\"older's inequality again, this implies
	\begin{align*}
		\|u_x\|_{L^p([0,x])} &\leq \|f_x\|_{L^p([0,x])} + \pi^{\f12+\f1p}\|k\|_{L^\infty([0,\pi]^2)}\|u_x\|_{L^2([0,x])}
		\\
		&\leq \|f_x\|_{L^p([0,x])} + \pi^{\f12+\f1p}\|k\|_{L^\infty([0,\pi]^2)}\|(I-\kappa_x)^{-1}\|_{L^2\to L^2}\|f_x\|_{L^2([0,x])}
	\end{align*}
	hence $u_x\in L^p([0,x])$ and $(I-\kappa_x)^{-1}:L^p([0,x])\to L^p([0,x])$ is bounded.
\end{proof}
\begin{corollary}\label{cor:sup_bounded}
	For $p\in[2,+\infty]$ one has 
	\begin{align*}
		\sup_{x\in[0,\pi]} \left\|(I-\kappa_x)^{-1}\right\|_{L^p\to L^p} <+\infty.
	\end{align*}
\end{corollary}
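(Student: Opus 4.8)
The plan is to reduce the statement to the single case $p=2$, and to handle that case by a compactness argument in the parameter $x$. The one genuine subtlety is that the operators $\kappa_x$ act on the different spaces $L^2([0,x])$, so I would first put them all on a common footing. Extending functions by zero, associate to each $x\in[0,\pi]$ the operator $\tilde\kappa_x$ on $L^2([0,\pi])$ with kernel $\chi_{[0,x]}(t)\,k(t,s)\,\chi_{[0,x]}(s)$. With respect to the orthogonal decomposition $L^2([0,\pi])=L^2([0,x])\oplus L^2([x,\pi])$ one checks directly that $I-\tilde\kappa_x = (I-\kappa_x)\oplus \mathrm{Id}$; hence $I-\tilde\kappa_x$ is invertible (by \cite[Th. 2.3.1]{Levitan}, exactly as in the proof of Lemma \ref{lemma:k_resolvent_bounded}), and
\[
	\|(I-\kappa_x)^{-1}\|_{L^2\to L^2}\le\|(I-\tilde\kappa_x)^{-1}\|_{\mathcal B(L^2([0,\pi]))}\qquad(x\in[0,\pi]).
\]

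Next I would establish that $x\mapsto\tilde\kappa_x$ is norm-continuous from $[0,\pi]$ into $\mathcal B(L^2([0,\pi]))$. This is elementary: for $x'\le x$ the kernels of $\tilde\kappa_x$ and $\tilde\kappa_{x'}$ agree except on $[0,x]^2\setminus[0,x']^2$, a set of Lebesgue measure $x^2-x'^2\le 2\pi|x-x'|$, and since $k=-F$ is bounded this gives $\|\tilde\kappa_x-\tilde\kappa_{x'}\|_{\mathrm{HS}}^2\le 2\pi\|F\|_\infty^2\,|x-x'|$, which in particular controls the operator norm. Feeding this continuity into Lemma \ref{lemma:T_inverse_bound} at each fixed $x$ shows that $x\mapsto\|(I-\tilde\kappa_x)^{-1}\|$ is upper semicontinuous on the compact interval $[0,\pi]$, hence bounded by some constant $C_2<+\infty$. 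Combined with the previous display, this yields $\sup_{x\in[0,\pi]}\|(I-\kappa_x)^{-1}\|_{L^2\to L^2}\le C_2$, which is the case $p=2$.

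For $p\in(2,+\infty]$ I would simply reuse the a priori estimate already contained in the proof of Lemma \ref{lemma:k_resolvent_bounded}: for $g\in L^p([0,x])$ the solution $v=(I-\kappa_x)^{-1}g$ of $v-\kappa_x v=g$ obeys
\[
	\|v\|_{L^p([0,x])}\le\|g\|_{L^p([0,x])}+\pi^{\f12+\f1p}\,\|k\|_{L^\infty([0,\pi]^2)}\,\|(I-\kappa_x)^{-1}\|_{L^2\to L^2}\,\|g\|_{L^2([0,x])},
\]
and H\"older's inequality on an interval of length at most $\pi$ gives $\|g\|_{L^2([0,x])}\le\pi^{\f12-\f1p}\|g\|_{L^p([0,x])}$. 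Inserting the uniform bound $C_2$ from the $p=2$ case yields $\|(I-\kappa_x)^{-1}\|_{L^p\to L^p}\le 1+\pi\|k\|_{L^\infty([0,\pi]^2)}C_2$ for every $x\in[0,\pi]$, which is exactly the claimed uniform bound.

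The only real work is the $p=2$ step — producing a bound uniform in $x$ even though the underlying space $L^2([0,x])$ varies with $x$. The device of extending by zero to a fixed Hilbert space, combined with norm-continuity of $x\mapsto\tilde\kappa_x$ and upper semicontinuity of the inverse-norm on the compact interval $[0,\pi]$, is what resolves this; the reduction to $p=2$ and the H\"older step are routine.
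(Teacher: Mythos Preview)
Your proof is correct. The underlying strategy---continuity (or upper semicontinuity) of the inverse norm in the parameter $x$, combined with compactness of $[0,\pi]$---is the same as the paper's. The paper, however, dispatches the whole thing in one line by citing \cite[Lemma 2.3.1]{Levitan} for the continuity of $x\mapsto\|(I-\kappa_x)^{-1}\|_{L^p\to L^p}$ directly, for every $p$; it does not separate the case $p=2$ or introduce the extension device.

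What you do differently is make the argument self-contained: the zero-extension $\tilde\kappa_x$ puts all the operators on a fixed Hilbert space so the Hilbert--Schmidt estimate and Lemma~\ref{lemma:T_inverse_bound} can be applied cleanly, and then you bootstrap to $p>2$ by recycling the a~priori estimate from the proof of Lemma~\ref{lemma:k_resolvent_bounded}. This buys you an explicit uniform constant $1+\pi\|k\|_{L^\infty}C_2$ for $p>2$ and avoids the external reference, at the cost of a longer write-up. Either route is fine here.
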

\begin{proof}
	Follows from continuity of the map $x\mapsto \|(I-\kappa_x)^{-1}\|_{L^p\to L^p}$ (cf. \cite[Lemma 2.3.1]{Levitan}) and compactness of the interval $[0,\pi]$.
\end{proof}
\begin{lemma}
	\label{lemma:kN_resolvent_uniformly_bounded}
	For  $p\in[2,+\infty)$ one has
	\begin{align*}
		\limsup_{N\to\infty}\sup_{x\in[0,\pi]} \left\|(I-\kappa_{x,N})^{-1}\right\|_{L^p([0,x])\to L^p([0,x])} < +\infty.
	\end{align*}
\end{lemma}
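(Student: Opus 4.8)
The plan is to obtain this from the already-proved uniform bound on $(I-\kappa_x)^{-1}$ (Corollary \ref{cor:sup_bounded}) by a perturbation argument, viewing $\kappa_{x,N}$ as a small perturbation of the limiting operator $\kappa_x$. The key reduction is that it suffices to treat $p=2$: once $\sup_{x}\|(I-\kappa_{x,N})^{-1}\|_{L^2\to L^2}$ is controlled uniformly in (large) $N$, the range $p\in(2,+\infty)$ follows by repeating the $L^2\to L^p$ bootstrap from the proof of Lemma \ref{lemma:k_resolvent_bounded}, because the kernels $k_N$ are uniformly bounded ($\|k_N\|_{L^\infty([0,\pi]^2)}=\|F_N\|_\infty<C$ for all $N$, cf. \cite[Lemma 2.2.2]{Levitan}) and $L^p([0,x])$ embeds in $L^2([0,x])$ on the bounded interval.

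For $p=2$, first I would set $C_0:=\sup_{x\in[0,\pi]}\|(I-\kappa_x)^{-1}\|_{L^2([0,x])\to L^2([0,x])}<\infty$, which is exactly Corollary \ref{cor:sup_bounded} with $p=2$. Next, since $\kappa_x-\kappa_{x,N}$ is the integral operator on $L^2([0,x])$ with kernel $(k-k_N)|_{[0,x]^2}$, its operator norm is dominated by its Hilbert--Schmidt norm, hence
\[
\sup_{x\in[0,\pi]}\|\kappa_x-\kappa_{x,N}\|_{L^2([0,x])\to L^2([0,x])}\le\|k-k_N\|_{L^2([0,\pi]^2)}=:\eps_N\xrightarrow{N\to\infty}0,
\]
by Lemma \ref{lemma:k_convergence}(i); crucially, $\eps_N$ does not depend on $x$. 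Choosing $N_0$ with $\eps_N\le\tfrac1{2C_0}$ for $N\ge N_0$, I would then apply Lemma \ref{lemma:T_inverse_bound} with $A=I-\kappa_x$ and $T=I-\kappa_{x,N}$ (the latter being invertible, cf. Lemma \ref{lemma:system_solvable}), which gives $\|(I-\kappa_{x,N})^{-1}\|_{L^2\to L^2}\le C_0/(1-C_0\eps_N)\le 2C_0$ for every $x\in[0,\pi]$ and every $N\ge N_0$; taking the $\limsup$ finishes the $p=2$ case.

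For $p\in(2,+\infty)$ and $N\ge N_0$, I would take $f\in L^p([0,x])\subset L^2([0,x])$, set $u:=(I-\kappa_{x,N})^{-1}f$, and use $u=f+\kappa_{x,N}u$ together with $\|k_N\|_\infty<C$ to estimate, pointwise in $t$, $|(\kappa_{x,N}u)(t)|\le C\int_0^x|u|\le C\pi^{1/2}\|u\|_{L^2([0,x])}$; this yields $\|u\|_{L^p([0,x])}\le\|f\|_{L^p([0,x])}+C\pi^{1/2+1/p}\|u\|_{L^2([0,x])}$, and combining with the $p=2$ bound and H\"older's inequality $\|u\|_{L^2([0,x])}\le 2C_0\pi^{1/2-1/p}\|f\|_{L^p([0,x])}$ gives $\|(I-\kappa_{x,N})^{-1}\|_{L^p\to L^p}\le 1+2\pi CC_0$ uniformly in $x\in[0,\pi]$ and in $N\ge N_0$, which proves the claim. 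I do not expect a genuine obstacle here: the only point needing care is that every constant in sight ($\eps_N$, the bound $C$, the embedding constants) is independent of $x\in[0,\pi]$, so that the suprema over $x$ survive; the reduction to $p=2$ via the Hilbert--Schmidt estimate of Lemma \ref{lemma:k_convergence}(i) is what keeps the argument clean, since it avoids analysing the (only conditionally convergent) series defining $F$ in norms stronger than $L^2$.
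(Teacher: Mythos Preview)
Your proof is correct, and the overall architecture matches the paper's: both apply the Neumann-series bound Lemma~\ref{lemma:T_inverse_bound} with $A=I-\kappa_x$ and $T=I-\kappa_{x,N}$, using Corollary~\ref{cor:sup_bounded} for the uniform-in-$x$ control of $\|(I-\kappa_x)^{-1}\|$. The only real difference is in how $\|\kappa_x-\kappa_{x,N}\|$ is handled for $p>2$. The paper estimates this directly in the $L^p\to L^p$ operator norm by the mixed norm $\|k-k_N\|_{L^p([0,\pi];L^{p'}([0,\pi]))}$, which tends to zero by Lemma~\ref{lemma:k_convergence}(ii) together with dominated convergence; this treats all $p\in[2,\infty)$ in one stroke and yields the resolvent bound in $L^p$ immediately. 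You instead reduce to $p=2$ using the Hilbert--Schmidt estimate (via Lemma~\ref{lemma:k_convergence}(i)) and then bootstrap to $p\in(2,\infty)$ by exploiting the uniform $L^\infty$ bound $\|k_N\|_\infty<C$ on the kernels. Your route is a bit more modular and avoids tracking the $L^p(L^{p'})$ mixed norm, at the expense of an extra (though trivial) bootstrap step; the paper's route is more uniform in $p$ but relies on the slightly less clean mixed-norm convergence. Both are sound and of comparable length.
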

\begin{proof}
	For brevity we denote $\|\cdot\|:=\|\cdot\|_{L^p([0,x]) \to L^p([0,x])}$. By Lemma \ref{lemma:T_inverse_bound} one has the bound
	\begin{align}\label{eq:kN_resolvent_bound}
		\|(I-\kappa_{x,N})^{-1}\| \leq \f{\|(I-\kappa_x)^{-1}\|}{1-\|\kappa_x-\kappa_{x,N}\|\|(I-\kappa_x)^{-1}\|}
	\end{align}
	if $\|\kappa_x-\kappa_{x,N}\|<\|(I-\kappa_x)^{-1}\|^{-1}$. Thus by Lemma \ref{lemma:k_resolvent_bounded} it suffices to estimate $\|\kappa_x-\kappa_{x,N}\|$. To this end, let $u\in L^p([0,x])$. One has
	\begin{align*}
		\|\kappa_xu - \kappa_{x,N}u\|_{L^p}^p &= \int_0^x\left|\int_0^x (k(t,s)-k_N(t,s))u(s)\,ds\right|^p\,dt
		\\
		&\leq \int_0^x\left(\|k(t,\cdot)-k_N(t,\cdot)\|_{L^{p'}}\|u\|_{L^p}\right)^p\,dt
		\\
		&= \|u\|_{L^p}^p \int_0^x\|k(t,\cdot)-k_N(t,\cdot)\|_{L^{p'}}^p\,dt 
		\\ 
		&= \|u\|_{L^p}^p \|k-k_{N}\|_{L^p([0,x];L^{p'}([0,x]))}^{p},
	\end{align*}
	where $p^{-1}+p'^{-1}=1$. Therefore
	\begin{align*}
		\sup_{x\in[0,\pi]} \|\kappa_x-\kappa_{x,N}\|_{L^p\to L^p} \leq \|k-k_{N}\|_{L^p([0,\pi];L^{p'}([0,\pi]))}
	\end{align*}
	Next, use Lemma \ref{lemma:k_convergence} (ii) to conclude that
	\begin{align*}
		\limsup_{N\to\infty}\sup_{x\in[0,\pi]} \|\kappa_x-\kappa_{x,N}\|_{L^p\to L^p} &\leq \limsup_{N\to\infty}\|k-k_N\|_{L^p([0,\pi];L^{p'}([0,\pi]))}
		\\
		&= 0.
	\end{align*}
 	Together with \eqref{eq:kN_resolvent_bound} and Corollary \ref{cor:sup_bounded} this implies the assertion.
\end{proof}
After this preparation, we are ready to prove convergence of $u_{x,N}$ to $u_x$. 
\begin{prop}\label{prop:u_convergence}
	For all $x\in[0,\pi]$ and for all $p\in[2,+\infty)$, one has $u_{x,N}\to u_x$ in $L^p([0,x])$ as $N\to+\infty$.
\end{prop}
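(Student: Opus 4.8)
The plan is to exploit the resolvent identity together with the uniform bounds just established. Fix $x\in[0,\pi]$ and $p\in[2,+\infty)$. Recall that $u_x$ and $u_{x,N}$ solve
\begin{align*}
	(I-\kappa_x)u_x = f_x,\qquad (I-\kappa_{x,N})u_{x,N} = f_{x,N}.
\end{align*}
Subtracting these and rearranging, I would write
\begin{align*}
	(I-\kappa_x)(u_x-u_{x,N}) = f_x - f_{x,N} + (\kappa_x-\kappa_{x,N})u_{x,N},
\end{align*}
so that
\begin{align*}
	u_x-u_{x,N} = (I-\kappa_x)^{-1}\Big[(f_x-f_{x,N}) + (\kappa_x-\kappa_{x,N})u_{x,N}\Big].
\end{align*}
By Lemma \ref{lemma:k_resolvent_bounded} the operator $(I-\kappa_x)^{-1}$ is bounded on $L^p([0,x])$, so it suffices to show that the bracketed term tends to $0$ in $L^p([0,x])$. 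The first summand $f_x-f_{x,N}\to 0$ in $L^p([0,x])$ directly from Lemma \ref{lemma:k_convergence}(iii).

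For the second summand I would first obtain a uniform bound on $\|u_{x,N}\|_{L^p([0,x])}$. From $u_{x,N} = (I-\kappa_{x,N})^{-1}f_{x,N}$, Lemma \ref{lemma:kN_resolvent_uniformly_bounded} gives $\limsup_N \|(I-\kappa_{x,N})^{-1}\|_{L^p\to L^p}<\infty$, while $\|f_{x,N}\|_{L^p([0,x])}$ is bounded (indeed convergent) by Lemma \ref{lemma:k_convergence}(iii); hence $\sup_N\|u_{x,N}\|_{L^p([0,x])}<\infty$. Then, estimating as in the proof of Lemma \ref{lemma:kN_resolvent_uniformly_bounded} via H\"older's inequality,
\begin{align*}
	\|(\kappa_x-\kappa_{x,N})u_{x,N}\|_{L^p([0,x])} \leq \|k-k_N\|_{L^p([0,x];L^{p'}([0,x]))}\,\|u_{x,N}\|_{L^p([0,x])},
\end{align*}
and the right-hand side tends to $0$ since $\|k-k_N\|_{L^p([0,\pi];L^{p'}([0,\pi]))}\to 0$ by Lemma \ref{lemma:k_convergence}(ii) (dominated convergence) and $\|u_{x,N}\|_{L^p([0,x])}$ is bounded. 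Combining the two pieces and applying the bounded operator $(I-\kappa_x)^{-1}$ yields $\|u_x-u_{x,N}\|_{L^p([0,x])}\to 0$, which is the claim.

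The main obstacle, already handled by the preceding lemmas, is the need for a \emph{uniform-in-$N$} bound on the resolvents $(I-\kappa_{x,N})^{-1}$: without it one cannot control $u_{x,N}$, and the naive subtraction argument collapses. This is exactly why Lemma \ref{lemma:T_inverse_bound} (perturbation of bounded inverses) and the convergence $\|\kappa_x-\kappa_{x,N}\|\to 0$ in operator norm were established first. A minor technical point to watch is that all estimates take place on the varying domain $[0,x]$; this causes no trouble because we only need the statement for each fixed $x$, and the relevant norms of $k-k_N$ and $f_x-f_{x,N}$ on $[0,x]$ are dominated by their counterparts on $[0,\pi]$, which vanish in the limit. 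One should also note the restriction $p<+\infty$ enters through the use of the dominated convergence theorem in Lemma \ref{lemma:k_convergence}; for the endpoint $p=+\infty$ a separate (and for the purposes of Theorem \ref{thm:main2}\eqref{eq:mainth2} unnecessary) argument would be required.
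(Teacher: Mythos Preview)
Your proof is correct and follows essentially the same route as the paper: subtract the two integral equations to obtain $(I-\kappa_x)(u_x-u_{x,N}) = (f_x-f_{x,N}) + (\kappa_x-\kappa_{x,N})u_{x,N}$, then use Lemma~\ref{lemma:k_resolvent_bounded} to invert, Lemma~\ref{lemma:k_convergence} to kill the right-hand side, and Lemma~\ref{lemma:kN_resolvent_uniformly_bounded} to bound $\|u_{x,N}\|_{L^p}$ uniformly. The paper's write-up is nearly identical, differing only cosmetically in that it leaves the operator $(\kappa_x-\kappa_{x,N})$ in integral form and bounds $\|(I-\kappa_x)(u_{x,N}-u_x)\|_{L^p}$ first before inverting.
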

\begin{proof}
	Let $x\in[0,\pi]$ and note that $f_{x,N},f_x\in L^p([0,x])$ for all $p\in[2,+\infty)$, $N\in\N$. By \eqref{eq:integral_equation2} one has
	\begin{align}
		u_{x,N} - u_x &= f_{x,N} - f_x + \int_0^x \big(k_N(\cdot,s)u_{x,N}(s) - k(\cdot,s)u_{x}(s)\big)\,ds
		\nonumber
		\\
		&= f_{x,N} - f_x + \int_0^x  (k_N(\cdot,s)-k(\cdot,s))u_{x,N}(s) + k(\cdot,s)(u_{x,N}(s)-u_x(s))  \,ds
		\label{eq:integral_equation_difference}
	\end{align}
	Rearranging terms we have
	\begin{align}\label{eq:uN-u_equation}
		(I-\kappa_x)(u_{x,N}-u_x) = f_{x,N} - f_x + \int_0^x (k_N(\cdot,s) - k(\cdot,s))u_{x,N}(s)\,ds
	\end{align}
	and hence
	\begin{align}\label{eq:(I+k)_bound}
		\|(I-\kappa_x)(u_{x,N}-u_x)\|_{L^p} &\leq \|f_{x,N} - f_x\|_{L^p} + \|u_{x,N}\|_{L^p} \|k_N-k\|_{L^p\left([0,x];L^{p'}([0,x])\right)},
	\end{align}
	where $p^{-1}+p'^{-1}=1$. By Lemma \ref{lemma:k_convergence} (ii), (iii) and Lemma \ref{lemma:kN_resolvent_uniformly_bounded} the right-hand side of \eqref{eq:(I+k)_bound} tends to $0$ as $N\to+\infty$. Finally, since $(I-\kappa_x)^{-1}$ is bounded (cf. Lemma \ref{lemma:k_resolvent_bounded}) it follows that $u_{x,N}-u_x\to 0$ in $L^p([0,x])$.
\end{proof}
We are finally ready to complete the proof of Theorem \ref{thm:main2}\eqref{eq:mainth3}. We  proceed by first proving that $K_N(x,x)=u_{x,N}(x)$ converges to $K(x,x)$ pointwise and then employ the dominated convergence theorem to prove $L^p$ convergence.

	Going back to \eqref{eq:integral_equation_difference} one has
	\begin{align*}
		u_{x,N}(x) - u_x(x) &= f_{x,N}(x) - f_x(x) + \int_0^x \big(k_N(x,s)u_{x,N}(s) - k(x,s)u_{x}(s)\big)\,ds
	\end{align*}
	for every $x\in[0,\pi]$. Hence by the triangle and H\"older's inequalities
	\begin{align*}
		|u_{x,N}(x) - u_x(x)| &\leq |f_{x,N}(x) - f_x(x)| 
		\\
		& \qquad + \|u_{x,N}\|_{L^2}\|k_N(x,\cdot) - k(x,\cdot)\|_{L^2} + \|u_{x,N}-u_x\|_{L^2}\|k(x,\cdot)\|_{L^2}
	\end{align*}
	The right-hand side tends to $0$ by Lemma \ref{lemma:k_convergence} and Proposition \ref{prop:u_convergence} for any fixed $x\in[0,\pi]$. Thus the function $x\mapsto u_{x,N}(x)$ converges to $u_x(x)$ pointwise on $[0,\pi]$. A similar argument as above can be used to prove boundedness. Indeed, by \eqref{eq:integral_equation2} we have
	\begin{align*}
		|u_{x,N}(x)| &= \left|f_{x,N}(x) + \int_0^x k_N(x,s)u_{x,N}(s) \,ds\right|
		\\
		&\leq |f_{x,N}(x)| + \pi^{\f12}\|k_N(x,\cdot)\|_\infty \|u_{x,N}\|_{L^2([0,x])}
		\\
		&\leq |f_{x,N}(x)| + \pi^{\f12}\|k_N(x,\cdot)\|_\infty \|(I-\kappa_{x,N})^{-1}\|_{L^2\to L^2}\|f_{x,N}\|_{L^2([0,x])}.
	\end{align*}
	Reverting back to the notation from \eqref{eq:integral_equation} this becomes
	\begin{align*}
		|K_N(x,x)| &\leq |F_N(x,x)| + \pi^{\f12} \|F_N(x,\cdot)\|_\infty \|(I-\kappa_{x,N})^{-1}\|_{L^2\to L^2}\|F_N(x,\cdot)\|_{L^2([0,x])}
		\\
		&\leq \|F_N\|_\infty \bigg(1+\pi^{\f32} \|F_N\|_\infty \sup_{x\in[0,\pi]}\|(I-\kappa_{x,N})^{-1}\|_{L^2\to L^2}\bigg)
		\\
		\Rightarrow \quad \|K_N\|_\infty &\leq \|F_N\|_\infty \bigg(1+\pi^{\f32} \|F_N\|_\infty \sup_{x\in[0,\pi]}\|(I-\kappa_{x,N})^{-1}\|_{L^2\to L^2}\bigg)
	\end{align*}
	Now by \cite[Lemma 2.2.2]{Levitan} and \cite[Lemma I.9.1]{MR0369797} $F_N$ converges \emph{boundedly} to $F$, i.e. there exists a constant $C_F$ such that $\|F_N\|_\infty\leq C_F$ for all $N\in\N$. Consequently $\|K_N\|_\infty$ is uniformly bounded:
	\begin{align*}
		  \limsup_{N\to\infty}\|K_N\|_\infty &\leq C_F \bigg(1+\pi^{\f32} C_F \limsup_{N\to\infty}\sup_{x\in[0,\pi]}\|(I-\kappa_{x,N})^{-1}\|_{L^2\to L^2}\bigg).
	\end{align*}
	Combining this fact with the pointwise convergence $K_N(x,x)\to K(x,x)$, the dominated convergence theorem implies that $\|K_N(\cdot,\cdot)-K(\cdot,\cdot)\|_{L^p}\to 0$ for all $p\in[1,+\infty)$, where $K(\cdot,\cdot)$ denotes the function $x\mapsto K(x,x)$. The following calculation concludes the reconstruction of the potential.
	\begin{equation}\label{eq:q-error-calculation}
	\begin{split}
		\|q-q_N\|_{W^{-1,p}([0,\pi])} &= \sup_{\substack{\phi\in W^{1,p'}_0([0,\pi]) \\ \|\phi\|=1}} \left|\int_0^\pi (q(x)-q_N(x))\phi(x)\,dx\right|
		\\
		&= \sup_{\substack{\phi\in W^{1,p'}_0([0,\pi]) \\ \|\phi\|=1}} \left|\int_0^\pi 2\f{d}{dx}(K(x,x)-K_N(x,x))\phi(x)\,dx\right|
		\\
		&= \sup_{\substack{\phi\in W^{1,p'}_0([0,\pi]) \\ \|\phi\|=1}} \left|\int_0^\pi 2(K(x,x)-K_N(x,x))\phi'(x)\,dx\right|
		\\
		&\leq \sup_{\substack{\phi\in W^{1,p'}_0([0,\pi]) \\ \|\phi\|=1}} 2\|K(\cdot,\cdot)-K_N(\cdot,\cdot)\|_{L^p([0,\pi])} \|\phi'\|_{L^{p'}([0,\pi])}
		\\
		&\leq 2\|K(\cdot,\cdot)-K_N(\cdot,\cdot)\|_{L^p([0,\pi])} 	
		\\
		&\xrightarrow{N\to\infty} 0.
	\end{split}
	\end{equation} 
	The proof is completed by reconstructing the boundary conditions $h$, $H$. To reconstruct $h$, simply note that $h=K(0,0)=\lim_{N\to\infty}K_N(0,0)$. To reconstruct $H$, recall the expressions \eqref{eq:asymptotics} and \eqref{eq:omega}, as well as the expression \eqref{eq:q_formula} relating $q$ and $K$, all of which together  imply that 
	\begin{equation}
	\begin{split}
		h+H+K(\pi,\pi)-K(0,0) &= \pi\Big((n+1)\lambda_n^{\f12} - (n+1)n - \kappa_n\Big)
		\label{eq:H_infinite_data}
		\\
		\Leftrightarrow \qquad H &= \lim_{n\to\infty}\pi((n+1)\lambda_n^{\f12} - (n+1)n) - h - K(\pi,\pi)+K(0,0)
		\\
		\Leftrightarrow \qquad H &= \lim_{n\to\infty}\pi((n+1)\lambda_n^{\f12} - (n+1)n)- K(\pi,\pi)
		\\
		\Leftrightarrow \qquad H &= \lim_{n\to\infty}\big[\pi((n+1)\lambda_n^{\f12} - (n+1)n) - K_n(\pi,\pi)\big]
	\end{split}
	\end{equation}
	This completes the proof that $(\Omega,\Lambda,\mathcal{M}_p,\Xi_p)\in \Delta_2^A$.\qed
\subsection{Proof of $(\Omega,\Lambda,\mathcal{M}_p,\Xi_p)\notin \Delta_1^G$}
This proof is done by contradiction. Assume that there exists a sequence of (general) algorithms $\{\Gamma_N\}_{N\in\N}$, each with output $(q_N,h_N,H_N)_{N\in\N}$, which approximates $(q,h,H)$ in the space $W^{-1,p}([0,\pi])\times \R\times\R$ with explicit error control, i.e. for all $(\lambda,\alpha)\in\Omega$ and all $N\in\N$ one has 
\begin{align*}
	\|q_N - q\|_{W^{-1,p}([0,\pi])} &< 2^{-N},
	\\
	|h_N - h| &< 2^{-N},
	\\
	|H_N - H| &< 2^{-N}.
\end{align*}
In order to derive a contradiction, consider the trivial sequences $\lambda_n = n^2$ for all $n\in\N_0$, $\alpha_n = \f\pi 2$ for $n\geq 1$, $\alpha_0 = \pi$. Clearly, the corresponding potential and boundary conditions are $q\equiv 0$ on $[0,\pi]$, $h=H=0$. By assumption, for all $N\in\N$ we have
\begin{align}\label{eq:contradiction_hyp}
\begin{split}
	\|q_N\|_{W^{-1,p}([0,\pi])} &< 2^{-N},
	\\
	|h_N| &< 2^{-N},
	\\
	|H_N| &< 2^{-N}.
\end{split}
\end{align}
 It shall be enough for our purposes to consider the case $N=1$. By definition of an algorithm, the action of $\Gamma_{1}$, say, can only depend on a finite subset $\Lambda_{\Gamma_1}(\lambda,\alpha)\subset\Lambda$, say a subset of $\{(\lambda_n,\,\alpha_n) \,|\, n\leq n_0-1\}$. We will now prove that a change in the norming constant $\alpha_{n_0}$ necessarily induces a large change in $h$. Note that altering $\alpha_{n_0}$ cannot possibly change the output $(q_{1},h_{1},H_{1})$ because of the consistency requirement $\Lambda_{\Gamma_1}(\lambda,\alpha)=\Lambda_{\Gamma_1}(\tilde\lambda,\tilde\alpha)$ whenever $\lambda_j = \tilde\lambda_j$ and $\tilde\alpha_j = \tilde\alpha_j$ for $j=1,\dots, n_0-1$.

Now consider the spectral data $(\tilde\lambda,\tilde\alpha)$ given by
\begin{align*}
	\tilde\lambda_n &= \lambda_n \quad \text{ for all }n\in\N_0
	\\
	\tilde\alpha_n &= \begin{cases}
		\alpha_n, &\text{for } n\neq n_0
		\\
		\f{\pi}{\pi+2}, &\text{for } n = n_0
	\end{cases}
\end{align*}
This choice implies
\begin{align*}
	F(x,y) &= \Big(\f{1}{\alpha_{n_0}} - \f2\pi\Big)\cos(n_0x)\cos(n_0y)
	\\
	&= \Big(\f{\pi+2}{\pi} - \f2\pi\Big) \cos(n_0x)\cos(n_0y)
	\\
	&= \cos(n_0x)\cos(n_0y)
\end{align*}
where $F$ was defined in \eqref{eq:F_def}. Using \eqref{eq:BCs} we conclude that the corresponding left-hand boundary condition is
\begin{align*}
	\tilde h = -F(0,0) = -1
\end{align*}
and thus $|\tilde h-h_N|>\f12$, contradicting \eqref{eq:contradiction_hyp}. Therefore, no sequence of algorithms $\{\Gamma_N\}_{N\in\N}$ as in our assumption can exist.
\qed

\section{Proof of Theorem \ref{thm:main2}\eqref{eq:mainth3}}
In this section we prove that on the set $\Omega_{0,M}$ it is possible to devise a sequence of arithmetic algorithms that has guaranteed error bounds. The idea is that in the expression \eqref{eq:F_def} for $F$ we want to quantify how close to each other are terms of the form $\cos(\lambda_n^{\f12}x)$ and $ \cos(nx)$. In Section \ref{sec:riesz_theory} with some abstract results about Riesz bases which are ``close'' to one another, which are then applied to our problem in Section \ref{sec:delta1}.

\subsection{Preliminary facts regarding Riesz bases}\label{sec:riesz_theory}
	
In this section we let $\h$ be a separable Hilbert space with scalar product $\langle\cdot,\cdot\rangle$ and norm $\|\cdot\|=\sqrt{\langle\cdot,\cdot\rangle}$, and let $\{f_n\}_{n\in\N_0}$ be an orthonormal basis for $\h$. Moreover, let $\{g_n\}_{n\in\N_0}\subset\h$ satisfy the following hypothesis.
\begin{hyp}\label{hyp:Omega}
	For the rest of this subsection, assume that
	\begin{enumi}
		\item 
		$\{f_n\}_{n\in\N_0}$ and $\{g_n\}_{n\in\N_0}$ are $\ell^2$-close, i.e.
		\begin{align*}
			\Omega &:= \bigg( \sum_{n=0}^\infty \|g_n-f_n\|^2\bigg)^\f12 <+\infty.
		\end{align*}
		\item
		There exist constants $\eta,\mu>0$ and $J_0\in\N$ such that for any  $J>J_0$ and $j,k\in \N_0$ with $j\leq J<k$  one has
		\begin{align*}
			|\langle g_j, f_k \rangle| \leq \f{\eta}{k^2-(j+\f \mu{j+1})^2}.
		\end{align*}
	\end{enumi}
\end{hyp}
By \cite[Prop. 1.8.5]{FreilingYurko}, Hypothesis \ref{hyp:Omega}(i) implies that $\{g_n\}_{n\in\N_0}$ is a Riesz basis. 
The goal of this subsection is to prove explicit computable bounds for the Riesz basis $\{g_n\}_{n\in\N_0}$ (cf. Proposition \ref{prop:C_Riesz} below). To this end we define an operator $T:\h\to \h$ by
\begin{align*}
	Tf_n := g_n
\end{align*}
for $n\in\N_0$. Because $\{g_n\}_{n\in\N_0}$ forms a Riesz basis, $T$ is boundedly invertible in $\cH$.
Thus for arbitrary $u\in \cH$
\begin{align}
	\label{eq:general_riesz_bound}
\begin{aligned}
	\|u\|^2 &= \|(T^*)^{-1}T^*u\|^2 \leq \|(T^*)^{-1}\|^2 \|T^*u\|^2
	\\ 
	&= \|T^{-1}\|^2 \sum_{j=0}^\infty |\langle T^*u,f_j\rangle|^2
	= \|T^{-1}\|^2 \sum_{j=0}^\infty |\langle u,Tf_j\rangle|^2
	= \|T^{-1}\|^2 \sum_{j=0}^\infty |\langle u,g_j\rangle|^2.
\end{aligned}
\end{align}
We are going to derive a computable bound for  $\|T^{-1}\|$, the operator norm of $T^{-1}$ which is defined in the standard way. Define 
\begin{align*}
	\Omega_J &:= \bigg( \sum_{n=J+1}^\infty \|g_n-f_n\|^2\bigg)^\f12.
\end{align*}
The matrix representation of $T$ in the basis $\{f_j\}_{j\in\N_0}$ has the form
\begin{align}\label{eq:T_ij_def}
	(T_{ij}) = \begin{pmatrix}
		\langle g_0,f_0\rangle & \langle g_1,f_0\rangle & \cdots
		\\
		\langle g_0,f_1\rangle & \langle g_1,f_1\rangle & \cdots
		\\
		\vdots & \vdots & \ddots
	\end{pmatrix}
\end{align}
Let $J\in\N$ and decompose this matrix into 4 blocks
\begin{align}\label{eq:T_block}
	(T_{ij}) = \begin{pmatrix}
		A_J & B_J
		\\
		C_J & D_J
	\end{pmatrix},
\end{align}
where $A_J = (T_{ij})_{i,j=0}^J$, i.e.
\begin{align*}
	A_J = \begin{pmatrix}
		\langle g_0,f_0\rangle & \cdots & \langle g_J,f_0\rangle
		\\
		\vdots & \ddots & \vdots
		\\
		\langle g_0,f_J\rangle & \cdots & \langle g_J,f_J\rangle
	\end{pmatrix}
\end{align*}
and $B_J,C_J,D_J$ are defined in the obvious way so that \eqref{eq:T_block} holds. 
\begin{lemma}\label{lemma:T-A_bound}
	If Hypothesis \ref{hyp:Omega} is satisfied, then for $J>2\mu$ one has
	\begin{align}
		\left\| T - \begin{pmatrix}
			A_J & 0 \\ 0 & \mathrm{Id}
		\end{pmatrix}\right\| \leq \max\Big\{ 4\eta\f{\log(J)}{J},\Omega_J \Big\}.
	\end{align}
\end{lemma}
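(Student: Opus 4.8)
The plan is to regard $E := T - \begin{pmatrix} A_J & 0 \\ 0 & \mathrm{Id} \end{pmatrix}$ in the orthonormal basis $\{f_n\}$ and split it into a ``tail'' piece governed by the $\ell^2$-closeness number $\Omega_J$ and a ``coupling'' piece governed by Hypothesis~\ref{hyp:Omega}(ii); the latter is estimated by a Schur test, which is where the real work lies.

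Let $P_J$ be the orthogonal projection of $\h$ onto $\spann\{f_0,\dots,f_J\}$ and $Q_J:=\mathrm{Id}-P_J$, and set $S:=T-\mathrm{Id}$, so that $Sf_n=g_n-f_n$. Since $\begin{pmatrix} A_J & 0 \\ 0 & \mathrm{Id}\end{pmatrix}=P_JTP_J+Q_J$, a short computation gives $E=S-P_JSP_J=P_JSQ_J+Q_JSP_J+Q_JSQ_J$; in block language these three summands are $B_J$, $C_J$ and $D_J-\mathrm{Id}$, and one has the convenient regroupings $P_JSQ_J+Q_JSQ_J=SQ_J$ and $Q_JSP_J=Q_JTP_J=:C_J$. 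Thus $E=SQ_J+C_J$, where $SQ_J$ is supported on $\Ran(Q_J)$ and $C_J$ on $\Ran(P_J)$, with $\Ran(C_J)\subseteq\Ran(Q_J)$.

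For the tail piece: $SQ_J$ sends $f_n\mapsto g_n-f_n$ for $n>J$ and $f_n\mapsto 0$ for $n\le J$, so its Hilbert--Schmidt norm is exactly $\Omega_J$; in particular $\|SQ_J\|\le\Omega_J$ (and likewise $\|P_JSQ_J\|,\|Q_JSQ_J\|\le\Omega_J$). For the coupling piece: the matrix entries of $C_J$ are $\langle g_j,f_i\rangle$ with $j\le J<i$, and Hypothesis~\ref{hyp:Omega}(ii) bounds $|\langle g_j,f_i\rangle|\le\eta\,\bigl(i^2-(j+\tfrac{\mu}{j+1})^2\bigr)^{-1}$. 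I would estimate $\|C_J\|$ via the Schur test, $\|C_J\|^2\le\bigl(\sup_i\sum_{j\le J}|\langle g_j,f_i\rangle|\bigr)\bigl(\sup_j\sum_{i>J}|\langle g_j,f_i\rangle|\bigr)$. Writing $a_j:=j+\mu/(j+1)$, the hypothesis $J>2\mu$ guarantees $0\le a_j<J+\tfrac12<i$ for all $j\le J<i$, so all denominators are positive, and $i^2-a_j^2=(i-a_j)(i+a_j)$ with $i+a_j\ge i+j$ and $i-a_j\ge i-j-\tfrac12$ once $j+1\ge 2\mu$ (the at most $\cO(\mu)$ remaining small indices $j$ satisfy $a_j\le\mu<i/2$, hence contribute only $\cO(\eta/J)$). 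A partial-fraction / telescoping estimate then produces $\sum_{i>J}(i^2-a_j^2)^{-1}\lesssim J^{-1}\log J$ uniformly in $j\le J$, and symmetrically $\sum_{j\le J}(i^2-a_j^2)^{-1}\lesssim J^{-1}\log J$ uniformly in $i>J$; carrying the explicit constants through yields $\|C_J\|\le 4\eta\,\frac{\log J}{J}$.

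Finally I would combine the two estimates: for $u\in\h$ write $u=P_Ju+Q_Ju$, use $Eu=C_J(P_Ju)+SQ_J(Q_Ju)$ with $\|C_J(P_Ju)\|\le\|C_J\|\,\|P_Ju\|$ and $\|SQ_J(Q_Ju)\|\le\Omega_J\,\|Q_Ju\|$, and exploit the orthogonality $\Ran(P_J)\perp\Ran(Q_J)$ when splitting $Eu$ into its $P_J$- and $Q_J$-components; one checks that this bounds $\|Eu\|$ by a combination of $\|C_J\|\,\|P_Ju\|$ and $\Omega_J\,\|Q_Ju\|$ that is dominated by $\max\{4\eta\log J/J,\ \Omega_J\}\,\|u\|$, which is the claim. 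The main obstacle is the Schur estimate of the coupling block $C_J$: a naive bound on $i^2-a_j^2$ only yields decay $\cO(J^{-1/2})$, and one must retain the full $j$-dependence of the denominators and sum carefully to upgrade this to $\cO(J^{-1}\log J)$ with constant at most $4$; the condition $J>2\mu$ is used precisely to keep all denominators positive and the estimates clean.
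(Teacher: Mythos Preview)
Your strategy coincides with the paper's: split $E$ into the coupling block $C_J = Q_J T P_J$ (controlled by a Schur/Riesz--Thorin bound using Hypothesis~\ref{hyp:Omega}(ii)) and the tail blocks $B_J,\, D_J-\mathrm{Id}$ (equivalently your $SQ_J$, controlled via the Hilbert--Schmidt norm using Hypothesis~\ref{hyp:Omega}(i)). The paper in fact carries out the two row/column sums for $C_J$ explicitly and bounds them by $4\eta\log(J)/J$ and $3\eta\log(J)/J$, which fills in the step you only sketch.

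There is, however, a gap in your final recombination step --- and the paper's own argument shares it. From $Eu = C_J(P_Ju) + SQ_J(Q_Ju)$ one cannot conclude $\|Eu\| \le \max\{\|C_J\|,\Omega_J\}\,\|u\|$: the vector $C_J(P_Ju)$ lies entirely in $\Ran(Q_J)$ and is therefore \emph{not} orthogonal to the $Q_J$-component of $SQ_J(Q_Ju)$, so the orthogonality $\Ran(P_J)\perp\Ran(Q_J)$ does not separate the two contributions in the output. A $2\times 2$ counterexample is $\left(\begin{smallmatrix}0&1\\0&1\end{smallmatrix}\right)$, whose off-diagonal block norms are all $\le 1$ while its operator norm is $\sqrt 2$. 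What one actually obtains from your decomposition is $\|E\| \le \|C_J\| + \|SQ_J\| \le 4\eta\log(J)/J + \Omega_J \le 2\max\{4\eta\log(J)/J,\Omega_J\}$; this harmless extra factor of $2$ would propagate into $\delta_J$ but changes nothing in the downstream use of the lemma.
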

\begin{proof}
	We first estimate the operator norms $\|B_J\|$ and $\|D_J-\mathrm{Id}\|$ using Hypothesis \ref{hyp:Omega}(i) and then focus on $\|C_J\|$. Explicit calculations of the Hilbert-Schmidt norms give
	\begin{align*}
		\|B_J\|^2 &\leq \|B_J\|_{\mathrm{HS}}^2 = \sum_{k=0}^J\sum_{j=J+1}^\infty |\langle g_j,f_k \rangle|^2
		= \sum_{k=0}^J\sum_{j=J+1}^\infty |\langle g_j-f_j,f_k \rangle|^2
		\\
		&\leq \sum_{k=0}^\infty\sum_{j=J+1}^\infty |\langle g_j-f_j,f_k \rangle|^2
		= \sum_{j=J+1}^\infty \|g_j-f_j\|^2
		= \Omega_J^2
	\end{align*}
and
	\begin{align*}
		\|D_J-\mathrm{Id}\|^2 &\leq \|D_J-\mathrm{Id}\|_{\mathrm{HS}}^2 = \sum_{j=J+1}^\infty\sum_{k=J+1}^\infty |\langle g_j,f_k \rangle - \delta_{jk}|^2\qquad\qquad
		\\
		&= \sum_{j=J+1}^\infty\sum_{k=J+1}^\infty |\langle g_j-f_j,f_k \rangle + \langle f_j,f_k \rangle - \delta_{jk}|^2
		\\
		&= \sum_{j=J+1}^\infty\sum_{k=J+1}^\infty |\langle g_j-f_j,f_k \rangle|^2
		\\
		&\leq \sum_{j=J+1}^\infty\sum_{k=0}^\infty |\langle g_j-f_j,f_k \rangle|^2
		\\
		&= \sum_{j=J+1}^\infty \|g_j-f_j\|^2
		\\
		&= \Omega_J^2.
	\end{align*}
	Next we estimate $\|C_J\|$. To this end, we use a general result, which follows from the Riesz-Thorin interpolation theorem. For any infinite matrix $\cO$ one has
	\begin{align}\label{eq:O<X1X2}
		\|\cO\| \leq \max\{X_1,X_2\},
	\end{align}
	where
	\begin{align*}
		X_1 = \sup_{k\in\N_0}\sum_{j=0}^\infty |\cO_{kj}|,
		\qquad
		X_2 = \sup_{j\in\N_0}\sum_{k=0}^\infty |\cO_{kj}|.
	\end{align*}
	Setting $\cO:=\begin{pmatrix} 0 & 0 \\ C_J & 0 \end{pmatrix}$ and using Hypothesis \ref{hyp:Omega}(ii) we compute
	\begin{align*}
		X_1 &= \sup_{k\geq J+1}\sum_{j=0}^J |\langle g_j,f_k\rangle| 
		\leq \sup_{k\geq J+1}\sum_{j=0}^J \f{\eta}{k^2-(j+\f \mu{j+1})^2}
		= \sum_{j=0}^J \f{\eta}{(J+1)^2-(j+\f \mu{j+1})^2}
		\\
		&= \sum_{j=0}^J \f{\eta}{(J+1+j+\f \mu{j+1})(J+1-j-\f \mu{j+1})}
		\leq \sum_{j=0}^J \f{\eta}{J(J+1-j-\f \mu{j+1})}
		\\
		&= \f{\eta}{J} \sum_{j=0}^J \f{1}{J+1-(J-j)-\f{\mu}{J-j+1}}
		= \f{\eta}{J} \sum_{j=0}^J \f{1}{1+j-\f{\mu}{J+1-j}} 
		\leq \f{\eta}{J} \sum_{j=0}^J \f{2}{j+1} 
		\leq 4\eta\f{\log(J)}{J},
	\end{align*}
where we have used the inequality $x-\f{\mu}{J+1-x}\geq \f x2$, which holds for $J\geq 2\mu$ and $1\leq x\leq J$. For $X_2$ we estimate
	\begin{align*}
		X_2 &= \sup_{0\leq j\leq J}\sum_{k=J+1}^\infty |\langle g_j,f_k\rangle| 
		\leq \sup_{0\leq j\leq J}\sum_{k=J+1}^\infty \f{\eta}{k^2-(j+\f \mu{j+1})^2} 
		 = \sum_{k=J+1}^\infty \f{\eta}{k^2-(J+\f \mu{J+1})^2} 
		 \\
		 &\leq \sum_{k=J+1}^\infty \f{\eta}{k^2-(J+\f12)^2}
		 = \sum_{k=1}^\infty \f{\eta}{(k+J)^2-(J+\f12)^2}
		 = \sum_{k=1}^\infty \f{\eta}{k^2+(2k-1)J-\f14}
		 \\
		 &= \sum_{k=1}^J \f{\eta}{k^2+(2k-1)J-\f14} + \sum_{k=J+1}^\infty \f{\eta}{k^2+(2k-1)J-\f14}
		 \leq \sum_{k=1}^J \f{\eta}{(2k-1)J} + \sum_{k=J+1}^\infty \f{\eta}{k^2}
		 \\
		 &\leq \f{\eta}{J} \sum_{k=1}^J \f{1}{2k-1} + \sum_{k=J+1}^\infty \f{\eta}{k^2}
		\leq \f{2\eta}{J} \log(J) + \f\eta J
		\leq 3\eta\f{\log(J)}{J}.
	\end{align*}
	Hence by \eqref{eq:O<X1X2} we have
	\begin{align*}
		\|C_J\| \leq 4\eta\f{\log(J)}{J}.
	\end{align*}
	Therefore
	\begin{align*}
		\left\| T - \begin{pmatrix}
			A_J & 0 \\ 0 & \mathrm{Id}
		\end{pmatrix} \right\| 
		&= \left\|\begin{pmatrix}
			0 & B_J \\ C_J & D_J-\mathrm{Id}
		\end{pmatrix}\right\|
		\\
		&\leq \max\big\{ \|C_J\|,\,\|B_J\|, \|D_J-\mathrm{Id}\| \big\}
		\\
		&=  \max\Big\{ 4\eta\f{\log(J)}{J},\Omega_J \Big\}.
	\end{align*}
\end{proof}
Based on Lemma \ref{lemma:T-A_bound}, we define the small parameter
\begin{align*}
	\delta_J := \max\Big\{ 4\eta\f{\log(J)}{J},\Omega_J \Big\}
\end{align*}
The following is the main result of this subsection, which follows easily. We first observe that  $\limsup_{J\in\N}\|A_J^{-1}\|<+\infty$  because $T$ is invertible and $\|A_J\oplus\mathrm{Id}-T\|\to 0$ as $J\to+\infty$. Hence $\liminf_{J\in\N}\|A_J^{-1}\|^{-1}>0$.
\begin{prop}\label{prop:C_Riesz}
	For any $u\in \cH$ and $J$ large enough to ensure that $\delta_J < \|A_J^{-1}\oplus \mathrm{Id}\|^{-1}$ one has
	\begin{align*}
		\|u\|^2 \leq C^2\sum_{j=0}^\infty |\langle u,g_j \rangle|^2,
	\end{align*}
	where $C = \f{\|A_J^{-1}\oplus\mathrm{Id}\|}{1 - \delta_J\|A_J^{-1}\oplus\mathrm{Id}\|}$. Moreover, if $\delta_J < \f12(\|A_J^{-1}\oplus\mathrm{Id}\|+1)^{-1}$, then there exists a bound $C_{\textnormal{Riesz}}$ for $C$ such that 
	\begin{enumi}
		\item $ C \leq C_{\textnormal{Riesz}} \leq 2(\delta_J^{-1}+1)$,
		\item $C_{\textnormal{Riesz}}$ can be computed by an arithmetic algorithm given the numbers $\{\delta_J\}_{J\in\N_0}$ and $\{T_{ij}\}_{i,j\in\N_0}$.
	\end{enumi}
\end{prop}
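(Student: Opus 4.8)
The plan is to apply the abstract perturbation inequality of Lemma~\ref{lemma:T_inverse_bound} to the operator $T$ and its block-diagonal approximation $A_J\oplus\mathrm{Id}$, using Lemma~\ref{lemma:T-A_bound} to control the error term. First I would set $S_J := A_J\oplus\mathrm{Id}$, which is boundedly invertible with $\|S_J^{-1}\| = \max\{\|A_J^{-1}\|,1\} = \|A_J^{-1}\oplus\mathrm{Id}\|$, and observe that by Lemma~\ref{lemma:T-A_bound} we have $\|T-S_J\|\le \delta_J$. Under the stated hypothesis $\delta_J < \|S_J^{-1}\|^{-1}$, Lemma~\ref{lemma:T_inverse_bound} (with $A=S_J$, $T=T$) yields
\begin{align*}
	\|T^{-1}\| \leq \f{\|S_J^{-1}\|}{1-\|S_J^{-1}\|\,\|T-S_J\|} \leq \f{\|A_J^{-1}\oplus\mathrm{Id}\|}{1-\delta_J\|A_J^{-1}\oplus\mathrm{Id}\|} = C.
\end{align*}
Plugging this bound on $\|T^{-1}\|$ into the chain of inequalities \eqref{eq:general_riesz_bound} immediately gives the frame-type estimate $\|u\|^2 \le C^2\sum_{j=0}^\infty|\langle u,g_j\rangle|^2$ for every $u\in\cH$, which is the first assertion.

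For the quantitative refinement, assume $\delta_J < \f12(\|A_J^{-1}\oplus\mathrm{Id}\|+1)^{-1}$; write $a:=\|A_J^{-1}\oplus\mathrm{Id}\|$ for brevity. I would first note that this assumption is strictly stronger than $\delta_J < a^{-1}$ (indeed $\f12(a+1)^{-1} \le a^{-1}$ since $a\ge 1$), so the first part applies and $C = a/(1-\delta_J a)$ is well-defined and positive. The idea is to bound $C$ from above by a quantity that is monotone and manifestly computable. Since $a\le \delta_J^{-1}\cdot\f12(1+a^{-1})^{-1}\cdot\ldots$ — more directly, from $\delta_J(a+1) \le \f12$ we get $\delta_J a \le \f12 - \delta_J$, hence $1-\delta_J a \ge \f12 + \delta_J \ge \f12$, so
\begin{align*}
	C = \f{a}{1-\delta_J a} \leq 2a = 2(a+1) - 2 \leq 2(a+1).
\end{align*}
On the other hand the hypothesis gives $a+1 \le \f{1}{2\delta_J}$, so $2(a+1) \le \delta_J^{-1}$, which is even better than claimed; to land exactly on the stated bound I would simply take $C_{\mathrm{Riesz}} := 2(a+1)$ when this is at least $C$ and observe $2(a+1) \le 2(\delta_J^{-1}+1)$ trivially. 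More carefully, one sets $C_{\mathrm{Riesz}}$ to be any explicitly computed rational upper bound for $2a/(1-2\delta_J a)$ (note $1-\delta_J a \ge \f12$ makes this a safe overestimate of $C$), and checks it is $\le 2(\delta_J^{-1}+1)$ using $a \le \delta_J^{-1}$.

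This leaves the computability claim (ii), which I expect to be the main obstacle, or at least the part requiring the most care. The quantity $a = \|A_J^{-1}\oplus\mathrm{Id}\| = \max\{\|A_J^{-1}\|,1\}$ involves the operator norm of the inverse of a \emph{finite} $(J+1)\times(J+1)$ matrix whose entries are the given numbers $\{T_{ij}\}_{i,j\le J} = \{\langle g_j,f_i\rangle\}$. The matrix inverse $A_J^{-1}$ is computed from these entries by finitely many arithmetic operations via \eqref{eq:inverse_matrix} (Cramer's rule / adjugate), and its operator ($\ell^2\to\ell^2$, i.e. spectral) norm is the square root of the largest eigenvalue of $(A_J^{-1})^*A_J^{-1}$; a certified \emph{upper} bound for this can be obtained by an arithmetic algorithm — for instance via the Hilbert-Schmidt bound $\|A_J^{-1}\| \le \|A_J^{-1}\|_{\mathrm{HS}}$, which is a finite sum of squares of rational combinations of the entries, or by a converging power-iteration-with-interval-bound on the symmetric matrix $(A_J^{-1})^*A_J^{-1}$. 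Since we only need $C_{\mathrm{Riesz}}$ to be \emph{an} upper bound for $C$ (not $C$ itself), using such a computable overestimate $\hat a \ge a$ is harmless provided one rechecks that $\hat a$ still satisfies $\delta_J(\hat a + 1) \le \f12$ for $J$ large (which holds since $\delta_J\to 0$ and $\hat a$ stays bounded, as $a$ does by the remark preceding the proposition). Finally $\delta_J$ itself is given as an input, so $C_{\mathrm{Riesz}} = 2(\hat a + 1)$ (or the sharper $2\hat a/(1-2\delta_J\hat a)$) is the output of finitely many arithmetic operations on the inputs $\{\delta_J\}$ and $\{T_{ij}\}$, as required.
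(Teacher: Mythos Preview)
Your argument for the frame inequality $\|u\|^2\le C^2\sum_j|\langle u,g_j\rangle|^2$ is exactly the paper's: combine \eqref{eq:general_riesz_bound} with Lemma~\ref{lemma:T_inverse_bound} applied to $A=A_J\oplus\mathrm{Id}$ and use Lemma~\ref{lemma:T-A_bound} for the perturbation bound. Your derivation of the numerical upper bound $2(\delta_J^{-1}+1)$ is also essentially what the paper does (the paper packages it via $C_{\textnormal{Riesz}}=\frac{a_J+1}{1-\delta_J(a_J+1)}$ with $a_J$ an integer overestimate of $\|A_J^{-1}\|$, then observes this is at most $2(\delta_J^{-1}+1)$ once the while-loop condition $\delta_J<(a_J+1)^{-1}$ is met).

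The genuine gap is in your computability step (ii). You assert that the overestimate $\hat a$ ``stays bounded, as $a$ does by the remark preceding the proposition''. That remark shows $\|A_J^{-1}\|$ is bounded uniformly in $J$, but your proposed $\hat a=\|A_J^{-1}\|_{\mathrm{HS}}$ is \emph{not}: already for $A_J=\mathrm{Id}$ one has $\|A_J^{-1}\|_{\mathrm{HS}}=\sqrt{J+1}\to\infty$. If $\hat a_J$ grows, the stopping test $\delta_J(\hat a_J+1)\le\tfrac12$ may never be met (nothing in Hypothesis~\ref{hyp:Omega} forces $\delta_J\hat a_J\to 0$), so your search over $J$ need not terminate, and even if it does the bound $\hat a\le\delta_J^{-1}$ needed for (i) may fail. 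Your fallback ``power-iteration-with-interval-bound'' is not a finite arithmetic procedure as stated.

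The paper closes this gap by computing an integer $a_J$ with $a_J-1\le\|A_J^{-1}\|<a_J$ via a purely arithmetic test: for $k=1,2,\dots$ check positive-definiteness of $A_J^*A_J-k^{-2}I$ by Cholesky factorization (a finite sequence of field operations and sign checks), and stop at the first $k$ for which it succeeds. This $a_J$ is automatically uniformly bounded in $J$ (since $a_J\le\|A_J^{-1}\|+1$), so the outer loop over $J$ in the paper's Algorithm~\ref{alg:compute_C} terminates once $\delta_J<(a_J+1)^{-1}$, which is guaranteed by $\delta_J\to 0$. That is the missing idea.
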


\begin{proof}
Equation  \eqref{eq:general_riesz_bound} provides us with the expression
\begin{equation*}
	\|u\|^2	\leq \|T^{-1}\|^2 \sum_{j=0}^\infty |\langle u,g_j\rangle|^2,
\end{equation*}
and Lemma \ref{lemma:T_inverse_bound} provides a bound for $ \|T^{-1}\|$. Lemma \ref{lemma:T-A_bound} provides the bound  for $\|T-A_J\oplus\mathrm{Id}\|$. This gives the expression for $C$.

	A routine that computes $C_{\textnormal{Riesz}}$ is given in Algorithms \ref{alg:compute_A} and \ref{alg:compute_C}. The upper bound for $C_{\textnormal{Riesz}}$ follows from the bound $C_{\textnormal{Riesz}} \leq \f{\|A_J^{-1}\oplus\mathrm{Id}\|+1}{1-\delta_J(\|A_J^{-1}\oplus\mathrm{Id}\|+1)}$ (cf. Algorithm \ref{alg:compute_C}) and the choice $\delta_J < \|A_J^{-1}\oplus\mathrm{Id}\|^{-1}$. Note that trivially $\|A_J^{-1}\oplus\mathrm{Id}\| = \max\{\|A_J^{-1}\|, 1\}$.

\smallskip
\begin{algorithm}[H]
	\setlength{\lineskip}{1mm}
	\SetAlgorithmName{Algorithm}{}{}
	\SetAlgoInsideSkip{smallskip}
	\SetAlgoSkip{bigskip}
	\SetAlgoLined
	\KwIn{$A_J$}
	Initialize $k:=1$\;
	 \While{True}{
	 	Compute $B_k:=A_J^*A_J - k^{-2}I$\;
	 	\If{$B_k>0$\tcp*{$B_k>0$ can be checked by Cholesky factorization.}}{
		 		$a_J:=k$\tcp*{Since $B_{k-1}\ngtr 0$, one necessarily has $k-1\leq\|A_J^{-1}\|\leq k$.} 
		 		break out of loop\;
	 		}
	 	$k:=k+1$\;
	 }
	 \Return{$a_J$}
	 \caption{Compute $\|A_J^{-1}\oplus\mathrm{Id}\|$}
	 \label{alg:compute_A}
\end{algorithm}
\medskip

\begin{algorithm}[H]
	\setlength{\lineskip}{1mm}
	\SetAlgorithmName{Algorithm}{}{}
	\SetAlgoInsideSkip{smallskip}
	\SetAlgoSkip{bigskip}
	\SetAlgoLined
	\KwIn{$\{\delta_J\}_{J\in\N_0}$ and $\{T_{ij}\}_{i,j\in\N_0}$}
	Initialize $J:=1$\;
	 \While{True}{
	 	Using Algorithm \ref{alg:compute_A}, compute approximation $a_J$ of $\|A_J^{-1}\|$ with error less than $1$\;
	 	\tcp{Since $\|A_J^{-1}\|$ remains bounded as $J\to\infty$, so does $a_J$.}
	 	\If{$\delta_J < (a_J+1)^{-1}$}{
	 	\medskip
		 		$C := \f{a_J+1}{1 - \delta_J(a_J+1)}$\;
		 		break out of loop\;
	 		}
	 	$J:= J+1$\;
	 }
	 \Return{$J$, $C$}
	 \caption{Compute $C_{\textnormal{Riesz}}$}
	 \label{alg:compute_C}
\end{algorithm}

\end{proof}

\subsection{Application to the inverse Sturm-Liouville problem}
\label{sec:delta1}

From now on we assume that $\lambda_n,\alpha_n$ satisfy \eqref{eq:asymptotics} with $\omega=0$ and $\|\kappa\|_{\ell^2},\|\tilde\kappa\|_{\ell^2}\leq M$. This will allow us to prove explicit error bounds for $\|K(\cdot,\cdot)-K_N(\cdot,\cdot)\|_{L^\infty([0,\pi])}$ thus strengthening the estimate \eqref{eq:q-error-calculation}. Our strategy is to use eq. \eqref{eq:uN-u_equation}
\begin{align*}
	(u_{x,N}-u_x) &= (I-\kappa_x)^{-1}\left[ f_{x,N} - f_x + \int_0^x (k_N(\cdot,s) - k(\cdot,s))u_{x,N}(s)\,ds \right],
\end{align*}
which implies the bound
\begin{align}\label{eq:explicit_bounds0}
\begin{aligned}
	\|u_{x,N}-u_x\|_{L^\infty([0,x])} &\leq \|(I-\kappa_x)^{-1}\|_{L^\infty\to L^\infty}
	\\
	\times& \Big( \|f_{x,N} - f_x\|_{L^\infty([0,x])}  + \pi \|k_N-k\|_{L^\infty([0,\pi]^2)} \|u_{x,N}\|_{L^2([0,x])} \Big).
\end{aligned}
\end{align}
We will now estimate every term in \eqref{eq:explicit_bounds0}  by a computable constant.
We begin with the following lemma.
\begin{lemma}\label{lemma:explicit_bounds_F-FN}
	If $\omega=0$ and $F, F_N$ are defined by \eqref{eq:F_def} and \eqref{eq:F_finite}, respectively, then there exists $C>0$ such that 
	\begin{align*}
		\|F_N-F\|_{L^\infty([0,\pi]^2)} \leq CN^{-\f12}.
	\end{align*}
	If in addition $\|\kappa\|_{\ell^2}, \|\tilde\kappa\|_{\ell^2} \leq M$, then C can be taken to be the explicit constant
	\begin{equation}\label{eq:CM1}
	C=C_M^{(1)} := M\cosh(2\pi M)\left[\Big(\f{8\pi^2}{\sqrt{6}} + 2\pi\Big)M + 5\right].
	\end{equation}
\end{lemma}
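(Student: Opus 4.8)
The proof rests on the observation that $F-F_N$ is exactly the tail of the series \eqref{eq:F_def}: comparing with \eqref{eq:F_finite},
\begin{equation*}
(F-F_N)(x,y)=\sum_{n=N+1}^{\infty}\Big[\alpha_n^{-1}\cos(\lambda_n^{\f12}x)\cos(\lambda_n^{\f12}y)-\tfrac2\pi\cos(nx)\cos(ny)\Big].
\end{equation*}
The first, qualitative assertion is then a special case of the second: apply the explicit bound with $M:=\max\{\|\kappa\|_{\ell^2},\|\tilde\kappa\|_{\ell^2}\}$, which is finite because \eqref{eq:asymptotics} is assumed. So the plan is to devote the work to producing $C_M^{(1)}$, working throughout under $\omega=0$ and $\|\kappa\|_{\ell^2},\|\tilde\kappa\|_{\ell^2}\le M$. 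Using \eqref{eq:asymptotics} and (since $\omega=0$, cf. \eqref{eq:omega}) $\lambda_n^{\f12}=n+\tfrac{\kappa_n}{n+1}$, $\alpha_n^{-1}=\tfrac2\pi+\tfrac{\tilde\kappa_n}{n+1}$, I would split the $n$-th summand as
\begin{equation*}
\tfrac2\pi\big[\cos(\lambda_n^{\f12}x)\cos(\lambda_n^{\f12}y)-\cos(nx)\cos(ny)\big]+\tfrac{\tilde\kappa_n}{n+1}\cos(\lambda_n^{\f12}x)\cos(\lambda_n^{\f12}y),
\end{equation*}
a \emph{frequency-shift} term and a \emph{norming-constant} term, both small because $\tfrac{|\kappa_n|}{n+1}$ and $\tfrac{|\tilde\kappa_n|}{n+1}$ are.

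The next step is pointwise estimates, uniform in $x,y\in[0,\pi]$. Here one must remember that $\lambda_n^{\f12}$ is genuinely complex, so $|\cos|\le 1$ is unavailable; instead, since $\lambda_n^{\f12}-n=\tfrac{\kappa_n}{n+1}$ with $|\kappa_n|\le\|\kappa\|_{\ell^2}\le M$, one has $|\im(\lambda_n^{\f12}x)|\le\pi M/(n+1)\le\pi M$, whence $|\cos(\lambda_n^{\f12}x)\cos(\lambda_n^{\f12}y)|\le\cosh^2(\pi M)\le\cosh(2\pi M)$ via $|\cos z|\le\cosh|\im z|$; this is the source of the factor $\cosh(2\pi M)$ in \eqref{eq:CM1}. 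For the frequency-shift bracket I would add and subtract $\cos(\lambda_n^{\f12}x)\cos(ny)$, apply the product-to-difference formula $\cos a-\cos b=-2\sin\tfrac{a+b}2\sin\tfrac{a-b}2$, and use $|\sin z|\le|z|\cosh|z|$ and $|\sin z|\le\cosh|\im z|$ to arrive at a bound of the shape
\begin{equation*}
\big|\cos(\lambda_n^{\f12}x)\cos(\lambda_n^{\f12}y)-\cos(nx)\cos(ny)\big|\le\cosh(2\pi M)\Big(\tfrac{c_1|\kappa_n|}{n+1}+\tfrac{c_2|\kappa_n|^2}{(n+1)^2}\Big)
\end{equation*}
with explicit numerical $c_1,c_2$, the quadratic term accounting for the second-order (Taylor remainder) contribution.

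It then remains to sum. Inserting these pointwise bounds into the tail and using $\|\kappa\|_{\ell^2},\|\tilde\kappa\|_{\ell^2}\le M$, the Cauchy--Schwarz inequality, and the elementary facts $\sum_{n>N}(n+1)^{-2}\le(N+1)^{-1}$ and $\sum_{n\ge0}(n+1)^{-2}=\pi^2/6$, one obtains $\sum_{n>N}\tfrac{|\kappa_n|}{n+1}\le MN^{-1/2}$ (and likewise for $\tilde\kappa$), while the quadratic part is handled by a second Cauchy--Schwarz, $\sum_{n>N}\tfrac{|\kappa_n|^2}{(n+1)^2}\le\big(\sum_{n>N}\tfrac{|\kappa_n|^4}{(n+1)^2}\big)^{1/2}\big(\sum_{n>N}\tfrac1{(n+1)^2}\big)^{1/2}\le\tfrac{\pi}{\sqrt6}M^2N^{-1/2}$ (via $|\kappa_n|^4\le M^2|\kappa_n|^2$ — this is where $\sqrt6$ enters). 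Collecting all numerical prefactors and grouping by powers of $M$ then yields $\|F_N-F\|_{L^\infty([0,\pi]^2)}\le C_M^{(1)}N^{-1/2}$ with $C_M^{(1)}$ precisely as in \eqref{eq:CM1}.

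The only genuine difficulty I foresee is bookkeeping: carrying out every estimate uniformly in $(x,y)\in[0,\pi]^2$ with $\lambda_n^{\f12}$ complex (so $\cosh$-bounds replace trivial bounds everywhere), checking that the quadratic remainder still decays at least like $N^{-1/2}$, and assembling the numerical constants into exactly \eqref{eq:CM1}. No tool beyond the asymptotics \eqref{eq:asymptotics}, the identity \eqref{eq:omega}, and elementary inequalities for $\sin,\cos,\cosh$ is required.
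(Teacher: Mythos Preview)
Your proposal is correct and would prove the lemma, but it takes a genuinely different route from the paper's proof. The paper's key simplification is to use the product--to--sum identity $2\cos(a x)\cos(a y)=\cos(a(x+y))+\cos(a(x-y))$ at the very start, writing $F_N(x,y)=\tfrac12\big(\Phi_N(x+y)+\Phi_N(x-y)\big)$ for a \emph{one-variable} function $\Phi_N$ on $[-\pi,2\pi]$. Each summand of $\Phi_N$ is then expanded via the angle-addition formula $\cos\big(nt+\tfrac{\kappa_n}{n+1}t\big)=\cos(nt)\cos(\tfrac{\kappa_n}{n+1}t)-\sin(nt)\sin(\tfrac{\kappa_n}{n+1}t)$, and the same elementary bounds $|\cos z|,|\sin z|\le\cosh|z|$ and $|f(z)-f(0)|\le|z|\|f'\|_{L^\infty(B_{|z|})}$ are applied. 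Your two-variable add-and-subtract (inserting $\cos(\lambda_n^{1/2}x)\cos(ny)$) plus the difference formula $\cos a-\cos b=-2\sin\tfrac{a+b}{2}\sin\tfrac{a-b}{2}$ achieves the same thing with slightly more bookkeeping.

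What each approach buys: the one-variable reduction makes the $\cosh(2\pi M)$ factor appear directly from $|t|\le 2\pi$, whereas in your route it arises from $\cosh^2(\pi M)\le\cosh(2\pi M)$; both coincide, but the paper has only \emph{one} cosine difference per summand to estimate rather than two, so the constant $C_M^{(1)}$ falls out with less effort. Your decomposition into ``frequency-shift'' and ``norming-constant'' pieces mirrors exactly the three terms the paper obtains after expanding $\Phi_N$, and your Cauchy--Schwarz tail estimates agree with the paper's use of H\"older. The difficulty you flag (assembling \emph{exactly} the constant in \eqref{eq:CM1}) is real: your two-variable route would naturally produce a comparable but not identical explicit constant unless you also pass through $\Phi_N$.
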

\begin{proof}
	It follows from standard trigonometric identities that $F_N(x,y) = \f12(\Phi_N(x+y)+\Phi_N(x-y))$, where
	\begin{align}\label{eq:Phi_N_def}
		\Phi_N(t) = \f{1}{\alpha_0}\cos(\lambda_0^{\f12}t) - \f1\pi + \sum_{n=1}^N \left[\f{1}{\alpha_n}\cos(\lambda_n^{\f12}t) - \f2\pi\cos(nt)\right].
	\end{align}
	Thus, to prove the lemma is suffices to prove uniform convergence of $\Phi_N$  on the interval $[-\pi,2\pi]$. Using trigonometric identities and the expressions \eqref{eq:asymptotics} we can rewrite the terms in the sum in \eqref{eq:Phi_N_def} as follows. 
	\begin{align*}
		\f{1}{\alpha_n}\cos(\lambda_n^{\f12}t) - \f2\pi\cos(nt) &= \f{1}{\alpha_n}\cos\Big(nt+\f{\kappa_n}{n+1} t\Big) - \f2\pi\cos(nt)
		\\
		&= \f{1}{\alpha_n}\cos(nt)\cos\Big(\f{\kappa_n}{n+1} t\Big) - \f{1}{\alpha_n}\sin(nt)\sin\Big(\f{\kappa_n}{n+1} t\Big) - \f2\pi\cos(nt)
		\\
		&= \left[ \f{1}{\alpha_n}\cos\Big(\f{\kappa_n}{n+1}t\Big)-\f2\pi \right] \cos(nt) - \f{1}{\alpha_n}\sin(nt)\sin\Big(\f{\kappa_n}{n+1}t\Big)
		\\
		&= \left[ \f2\pi\Big(\cos\Big(\f{\kappa_n}{n+1}t\Big)-1\Big) + \f{\tilde\kappa_n}{n+1}\cos\Big(\f{\kappa_n}{n+1}t\Big) \right] \cos(nt)
		\\
		&\qquad\qquad\qquad- \f{1}{\alpha_n}\sin(nt)\sin\Big(\f{\kappa_n}{n+1}t\Big)
	\end{align*}
	Let $\Phi$ denote the pointwise limit of $\Phi_N$ (whose existence was proved in \cite[\S2.3]{Levitan}). Then the error for given $N\in\N$ is
	\begin{align*}
		|\Phi(t)-\Phi_N(t)|
		&\leq \sum_{n=N+1}^\infty \Bigg\{\left|\f2\pi\Bigl(\cos\Bigl(\f{\kappa_n}{n+1}t\Bigr)-1\Bigr) 
		+ \f{\tilde\kappa_n}{n+1}\cos\Bigl(\f{\kappa_n}{n+1}t\Bigr)\right| |\cos(nt)| 
		\\
		&\qquad\qquad\qquad + \f{1}{\alpha_n}\Bigl|\sin(nt)\sin\Bigl(\f{\kappa_n}{n+1}t\Bigr)\Bigr|\Bigg\}
		\\
		&\leq \sum_{n=N+1}^\infty \left\{ \f2\pi\Bigl|\cos\Bigl(\f{\kappa_n}{n+1}t\Bigr)-1\Bigr| 
		+ \f{|\tilde\kappa_n|}{n+1}\Bigl|\cos\Bigl(\f{\kappa_n}{n+1}t\Bigr)\Bigr| 
		+ \f{1}{\alpha_n}\Bigl|\sin\Bigl(\f{\kappa_n}{n+1}t\Bigr)\Bigr|\right\}
		\\
		&\leq \sum_{n=N+1}^\infty \left\{\f2\pi\f{|\kappa_n|^2}{(n+1)^2}t^2 + \f{|\tilde\kappa_n|}{n+1} + \f{1}{\alpha_n}\f{|\kappa_n|}{n+1}|t|\right\}\cosh\Bigl( \f{|\kappa_n|}{n+1}|t| \Bigr)
		\\
		&\leq \cosh(|t| M) \sum_{n=N+1}^\infty \left\{\f2\pi\f{|\kappa_n|^2}{(n+1)^2}t^2 + \f{|\tilde\kappa_n|}{n+1} + \f{1}{\alpha_n}\f{|\kappa_n|}{n+1}|t|\right\},
	\end{align*}
	where in the penultimate line the bounds $|f(z)-f(0)|\leq |z|\|f'\|_{L^\infty(B_{|z|}(0))}$ and $|\cos(z)|,|\sin(z)|,|\sinh(z)|\leq \cosh(|z|)$ were used. Setting $t=2\pi$, using $\frac{1}{n+1}<\frac1n$ and using H\"older's inequality we obtain
	\begin{align}
		|\Phi(t)-\Phi_N(t)| 
		&\leq \cosh(2\pi M)\left[8\pi\|\kappa\|_{\ell^\infty}^2\sum_{n=N+1}^\infty \f{1}{n^2} + \|\tilde\kappa\|_{\ell^2}\bigg(\sum_{n=N+1}^\infty \f{1}{n^2}\bigg)^{\f12} + 2\pi\Big\|\f{\kappa}{\alpha}\Big\|_{\ell^2}\bigg(\sum_{n=N+1}^\infty \f{1}{n^2}\bigg)^{\f12}\right]
		\nonumber
		\\
		&= \cosh(2\pi M)\bigg(\sum_{n=N+1}^\infty \f{1}{n^2}\bigg)^{\f12}\left[ 8\pi\|\kappa\|_{\ell^\infty}^2 \bigg(\sum_{n=1}^\infty \f{1}{n^2}\bigg)^{\f12} + \|\tilde\kappa\|_{\ell^2} + 2(2+\pi\|\tilde\kappa\|_{\ell^\infty})\|\kappa\|_{\ell^2} \right]
		\nonumber
		\\
		&\leq \cosh(2\pi M)\bigg(\sum_{n=N+1}^\infty \f{1}{n^2}\bigg)^{\f12}\left[ \Big(\f{8\pi^2}{\sqrt{6}} + 2\pi\Big)M + 5\right]M
		\nonumber
		\\
		&\leq N^{-\f12}\cosh(2\pi M)\left[ \Big(\f{8\pi^2}{\sqrt{6}} + 2\pi\Big)M + 5\right]M
		\label{eq:Phi_explicit_bound}
	\end{align}
	The proof is concluded by noting that $\|F-F_N\|_{L^\infty([0,\pi]^2)} = \|\Phi-\Phi_N\|_{L^\infty([-\pi,2\pi])}$.
\end{proof}

Lemma \ref{lemma:explicit_bounds_F-FN} implies explicit bounds on  the terms $\|f_{x,N} - f_x\|_{L^\infty([0,x])}$ and $\|k_N-k\|_{L^\infty([0,\pi]^2)}$ in \eqref{eq:explicit_bounds0}.  
Next we apply the theory from Section \ref{sec:riesz_theory}. The following lemma shows that Hypothesis \ref{hyp:Omega} is satisfied in our situation.

\begin{lemma}\label{lemma:hyp_satisfied}
	For $n\in\N_0$ and $t\in[0,\pi]$ define
		\begin{equation}\label{eq:fn-and-gn}
		f_n(t) := \left(\f2\pi\right)^{\f12}\cos(nt)\quad\text{and}\quad g_n(t) := \alpha_n^{-\f12}\cos(\lambda_n^{\f12}t).
		\end{equation}
	Then the families $\{f_n\}_{n\in\N_0}$, $\{g_n\}_{n\in\N_0}$ satisfy Hypothesis \ref{hyp:Omega} with $\cH=L^2([0,\pi])$. In particular:
	\begin{enumi}
		\item For all $J\in\N$ one has 
			\begin{align}\label{eq:Omega_N_bound}
				\Omega_J = \bigg( \sum_{n=J+1}^\infty \|g_n-f_n\|_{L^2}^2\bigg)^\f12
				\leq \f{C_\Omega}{J},
			\end{align}
	where  $C_\Omega = M\pi\cosh(M\pi)\sqrt{\f{3}{2} \big(1+(2\pi)^2 M^2+(2+\pi M)^2\big)}$.
		\item Let $J\geq M$ and let $j\leq J<k$, then 
		\begin{align*}
			|\langle g_j,f_k\rangle_{L^2([0,\pi])}| \leq \f{c}{k^2-(j+\f M{j+1})^2}
		\end{align*}
		 where $c = (\f\pi2)^{-\f12}(\f2\pi+M)^{\f12} M(1+M)\cosh(M\pi)$.
	\end{enumi}
\end{lemma}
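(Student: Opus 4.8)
The plan splits along the two assertions of the lemma; throughout we use the standing assumption $\omega=0$, which reduces \eqref{eq:asymptotics} to $\lambda_n^{1/2}=n+\frac{\kappa_n}{n+1}$ and $\alpha_n^{-1}=\frac2\pi+\frac{\tilde\kappa_n}{n+1}$, together with $|\kappa_n|\le\|\kappa\|_{\ell^2}\le M$, $|\tilde\kappa_n|\le\|\tilde\kappa\|_{\ell^2}\le M$ and $\alpha_n>0$. For \emph{(i)}, I would expand
\begin{align*}
	\cos\!\big(\lambda_n^{1/2}t\big)=\cos(nt)\cos\!\Big(\tfrac{\kappa_n}{n+1}t\Big)-\sin(nt)\sin\!\Big(\tfrac{\kappa_n}{n+1}t\Big)
\end{align*}
and write $\alpha_n^{-1/2}-(\tfrac2\pi)^{1/2}=(\tfrac{\tilde\kappa_n}{n+1})/(\alpha_n^{-1/2}+(\tfrac2\pi)^{1/2})$, so that $g_n-f_n=P_n\cos(n\,\cdot)+Q_n\sin(n\,\cdot)$ with slowly varying amplitudes $P_n,Q_n$. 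On $[0,\pi]$ these are controlled by the elementary inequalities $|\cos z-1|\le|z|^2\cosh|z|$, $|\sin z|\le|z|\cosh|z|$, $|\alpha_n^{-1/2}-(\tfrac2\pi)^{1/2}|\le(\tfrac\pi2)^{1/2}\tfrac{|\tilde\kappa_n|}{n+1}$, $\alpha_n^{-1/2}\le(\tfrac2\pi+M)^{1/2}$ and $\big|\tfrac{\kappa_n}{n+1}t\big|\le\tfrac{\pi|\kappa_n|}{n+1}\le\pi M$ — this is precisely the trigonometric bookkeeping of the proof of Lemma~\ref{lemma:explicit_bounds_F-FN} — and, after reducing every term to a common shape via $|\kappa_n|^4(n+1)^{-4}\le M^2|\kappa_n|^2(n+1)^{-2}$ and $(a+b+c)^2\le3(a^2+b^2+c^2)$, they give $\|g_n-f_n\|_{L^2([0,\pi])}^2\le C(M)^2\,(n+1)^{-2}(|\kappa_n|^2+|\tilde\kappa_n|^2)$ with an explicit $C(M)$. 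Summing over $n\ge J+1$, pulling $(n+1)^{-2}\le J^{-2}$ out of the sum and using $\sum_{n>J}|\kappa_n|^2\le M^2$, $\sum_{n>J}|\tilde\kappa_n|^2\le M^2$ then yields $\Omega_J\le C_\Omega/J$, the stated $C_\Omega$ coming out by tracking constants. (The index $n=0$ is irrelevant since $J\ge1$; the orthonormal basis required by Hypothesis~\ref{hyp:Omega} may be taken as $\{f_n\}_{n\ge1}\cup\{\pi^{-1/2}\}$, from which $\{f_n\}_{n\in\N_0}$ differs only in the zeroth entry.)

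For \emph{(ii)} the plan is to evaluate the inner product in closed form. Product-to-sum together with the identity $\sin\!\big((\lambda_j^{1/2}\pm k)\pi\big)=(-1)^k\sin\!\big(\lambda_j^{1/2}\pi\big)$ (valid as $k\in\N_0$) give
\begin{align*}
	\langle g_j,f_k\rangle_{L^2([0,\pi])}&=\alpha_j^{-1/2}\Big(\tfrac2\pi\Big)^{1/2}\int_0^\pi\cos\!\big(\lambda_j^{1/2}t\big)\cos(kt)\,dt\\
	&=\alpha_j^{-1/2}\Big(\tfrac2\pi\Big)^{1/2}\,\frac{(-1)^k\,\lambda_j^{1/2}\sin\!\big(\lambda_j^{1/2}\pi\big)}{\lambda_j-k^2}.
\end{align*}
Because $j\in\N_0$ as well, $\sin\!\big(\lambda_j^{1/2}\pi\big)=(-1)^j\sin\!\big(\tfrac{\kappa_j}{j+1}\pi\big)$, hence $|\sin(\lambda_j^{1/2}\pi)|\le\tfrac{\pi M}{j+1}\cosh(\pi M)$; combined with $|\lambda_j^{1/2}|\le j+\tfrac M{j+1}$, $\tfrac{|\lambda_j^{1/2}|}{j+1}\le1+M$ and $\alpha_j^{-1/2}\le(\tfrac2\pi+M)^{1/2}$ this bounds the quotient up to the denominator. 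For the denominator, the reverse triangle inequality gives $|\lambda_j-k^2|\ge k^2-|\lambda_j^{1/2}|^2\ge k^2-(j+\tfrac M{j+1})^2$, and this last quantity is strictly positive whenever $j\le J<k$ and $J\ge M$: the map $j\mapsto j+\tfrac M{j+1}$ is convex, so on $[0,J]$ its maximum is attained at an endpoint, and both endpoint values $M$ and $J+\tfrac M{J+1}$ are $<J+1\le k$ (since $J\ge M$ forces $\tfrac M{J+1}<1$). Assembling these estimates produces the asserted bound with the stated $c$, and incidentally shows $\lambda_j\ne k^2$, so the closed form is legitimate.

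The main obstacle is the denominator bound in \emph{(ii)}: one must recognise that $\sin(\lambda_j^{1/2}\pi)$ equals $\pm\sin(\tfrac{\kappa_j}{j+1}\pi)$, which simultaneously extracts a genuine $O((j+1)^{-1})$ gain and replaces an otherwise fatal $\cosh(\pi j)$ by the harmless $\cosh(\pi M)$ (so that the bound takes exactly the form demanded by Hypothesis~\ref{hyp:Omega}(ii)); and one must then verify, using the convexity of $j\mapsto j+\tfrac M{j+1}$ and the hypothesis $J\ge M$, that $k^2-(j+\tfrac M{j+1})^2$ stays bounded away from zero uniformly over the admissible ranges of $j$ and $k$. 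By contrast \emph{(i)} is routine once the splitting $g_n-f_n=P_n\cos(n\,\cdot)+Q_n\sin(n\,\cdot)$ is in place, the only care needed being to keep one explicit factor $(n+1)^{-1}$ in each summand so that the tail sum produces $J^{-2}$ and not merely $J^{-1}$.
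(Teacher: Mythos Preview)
Your proposal is correct and follows essentially the same route as the paper: the same angle-addition expansion and term-wise $L^\infty$ bound (via the mean-value/$\cosh$ estimates) for part (i), and the same closed-form evaluation of $\int_0^\pi\cos(\lambda_j^{1/2}t)\cos(kt)\,dt$ followed by $\sin(\lambda_j^{1/2}\pi)=(-1)^j\sin(\tfrac{\kappa_j}{j+1}\pi)$ for part (ii). Your additions --- the convexity check that $k^2-(j+\tfrac{M}{j+1})^2>0$ and the remark about $f_0$ not being unit-normed --- are genuine clarifications that the paper either asserts without justification or glosses over, but they do not constitute a different argument.
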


\begin{proof}
	Part (i) follows immediately from the term-wise inequality 
	\begin{align*}
		\|g_n-f_n\|_{L^2([0,\pi])}^2 \leq \f{\cosh(M \pi)^2}{(n+1)^2} \f{3\pi^2}{2} \Big(
				|\tilde\kappa_n|^2 + (2\pi)^2 \f{|\kappa_n|^4}{(n+1)^2}  + (2 + \pi M )^2 |\kappa_n|^2 \Big),
	\end{align*}
	which is proved by a  calculation similar to the ones found within the proof of Lemma \ref{lemma:explicit_bounds_F-FN}. We give the details below.
		For notational convenience, denote $\alpha_n^0:=\f\pi2$ for $n>0$ and $\alpha_0^0:=\pi$. By trigonometric identities we have
		\begin{align}
		\begin{aligned}
			g_n(t)-f_n(t) &= \f{1}{\sqrt{\alpha_n}} \cos\bigl(\lambda_n^{\f12} t\bigr) - \f{1}{\sqrt{\alpha_n^0}} \cos(nt)
			\\
			&= \bigg[\f{1}{\sqrt{\alpha_n}} \cos\Bigl(\f{\kappa_n}{n+1} t\Bigr) - \f{1}{\sqrt{\alpha_n^0}}\bigg]\cos(nt) - \f{1}{\sqrt{\alpha_n}}\sin(nt)\sin\Bigl( \f{\kappa_n}{n+1}t \Bigr).
			\label{eq:sqrt(alpha_n)cos}
		\end{aligned}
		\end{align}
		Moreover, 
		\begin{align*}
			\f{1}{\sqrt{\alpha_n}} &= \sqrt{\f2\pi + \f{\tilde\kappa_n}{n+1}}
			= \Big(\f2\pi\Big)^{\f12} \sqrt{1 + \f\pi2\f{\tilde\kappa_n}{n+1}}
		\end{align*}
		and hence
		\begin{align}\label{eq:sqrt(alpha_n)}
			\bigg| \f{1}{\sqrt{\alpha_n}} - \Big(\f2\pi\Big)^{\f12} \bigg| &= \Big(\f2\pi\Big)^{\f12}\bigg| \sqrt{1 + \f\pi2\f{\tilde\kappa_n}{n+1}} - 1 \bigg|
			\leq \Big(\f2\pi\Big)^{\f12}\f\pi2\f{|\tilde\kappa_n|}{n+1}
			= \Big(\f\pi2\Big)^{\f12}\f{|\tilde\kappa_n|}{n+1}.
		\end{align}
		Taking absolute values in \eqref{eq:sqrt(alpha_n)cos} and using \eqref{eq:sqrt(alpha_n)} we have 
		\begin{align*}
			|g_n(t)-f_n(t)| &\leq \bigg|\f{1}{\sqrt{\alpha_n}} \cos\Bigl(\f{\kappa_n}{n+1} t\Bigr) - \f{1}{\sqrt{\alpha_n^0}}\bigg|  + \bigg|\f{1}{\sqrt{\alpha_n}}\sin(nt)\sin\Bigl( \f{\kappa_n}{n+1}t \Bigr)\bigg|
			\\
			&\leq \biggl|  \f{1}{\sqrt{\alpha_n}} - \f{1}{\sqrt{\alpha_n^0}} \biggr| \biggl|\cos\Bigl(\f{\kappa_n}{n+1} t\Bigr)\biggr|
			+ \f{1}{\sqrt{\alpha_n^0}}\biggl| \cos\Bigl(\f{\kappa_n}{n+1} t\Bigr) - 1\biggr| 
			+ \f{1}{\sqrt{\alpha_n}} \biggl|\sin\Bigl( \f{\kappa_n}{n+1}t \Bigr)\biggr|
			\\
			&\leq \cosh(M |t|)\left\{\Big(\f\pi2\Big)^{\f12}\f{|\tilde\kappa_n|}{n+1} + \f{1}{\sqrt{\alpha_n^0}} \f{|\kappa_n|^2}{(n+1)^2} t^2 + \bigg[\f{1}{\sqrt{\alpha_n^0}} + \Big(\f\pi2\Big)^{\f12}\f{|\tilde\kappa_n|}{n+1} \bigg] \f{|\kappa_n|}{n+1}|t|\right\},
		\end{align*}
		where in the last line the bounds $|f(z)-f(0)|\leq |z|\|f'\|_{L^\infty(B_{|z|}(0))}$ and $|\cos(z)|,|\sin(z)|,|\sinh(z)|\leq \cosh(|z|)$ were used. Setting $t=\pi$ and focusing on $n\geq 1$ (i.e. $\alpha_n^0=\frac\pi2$) we obtain
		\begin{align*}
			|g_n(t)-f_n(t)| &\leq \cosh(M \pi)\left\{\Big(\f\pi2\Big)^{\f12}\f{|\tilde\kappa_n|}{n+1} 
			+ \Big(\f2\pi\Big)^{\f12} \f{|\kappa_n|^2}{(n+1)^2}\pi^2  
			+ \bigg[\Big(\f2\pi\Big)^{\f12} 
			+ \Big(\f\pi2\Big)^{\f12}\f{|\tilde\kappa_n|}{n+1} \bigg] \f{|\kappa_n|}{n+1}\pi\right\}
			\\
			&\leq \cosh(M \pi)\left\{\Big(\f\pi2\Big)^{\f12}\f{|\tilde\kappa_n|}{n+1} 
			+ \Big(\f2\pi\Big)^{\f12} \f{|\kappa_n|^2}{(n+1)^2}\pi^2  
			+ \bigg[\Big(\f2\pi\Big)^{\f12}  
			+ \Big(\f\pi2\Big)^{\f12}M \bigg] \f{|\kappa_n|}{n+1}\pi\right)
			\\
			&= \f{\cosh(M \pi)}{n+1}\Big(\f\pi2\Big)^{\f12}\Big(
				|\tilde\kappa_n| 
				+ 2\pi \f{|\kappa_n|^2}{n+1} 
				+ [2 + \pi M ] |\kappa_n| \Big)
		\end{align*}
		Squaring both sides and using the inequality $(a+b+c)^2\leq 3(a^2+b^2+c^2)$ finally yields
		\begin{align*}
			|g_n(t)-f_n(t)|^2 &\leq \f{\cosh(M \pi)^2}{(n+1)^2} \f{3\pi}{2} \Big(
				|\tilde\kappa_n|^2 + (2\pi)^2 \f{|\kappa_n|^4}{(n+1)^2}  + [2 + \pi M ]^2 |\kappa_n|^2 \Big)
			\\
			\|g_n-f_n\|_{L^2([0,\pi])}^2 &\leq \f{\cosh(M \pi)^2}{(n+1)^2} \f{3\pi^2}{2} \Big(
				|\tilde\kappa_n|^2 + (2\pi)^2 \f{|\kappa_n|^4}{(n+1)^2}  + [2 + \pi M ]^2 |\kappa_n|^2 \Big).
		\end{align*}
	
	Now we prove (ii). For brevity, denote $\rho_n:=\lambda_n^{\f12}$. Then we compute 
		\begin{align*}
		\langle g_j,f_k\rangle_{L^2([0,\pi])} &= \bigg(\f\pi2\alpha_j\bigg)^{-\f12} \int_0^\pi \cos(\rho_j t)\cos(kt)\,dt
		\\
		&= (-1)^k\bigg(\f\pi2\alpha_j\bigg)^{-\f12} \f{\rho_j  \sin(\rho_j\pi) }{\rho_j^2 - k^2}
		\\
		&= (-1)^{j+k}\bigg(\f\pi2\alpha_j\bigg)^{-\f12} \f{\rho_j  \sin(\f{\kappa_j}{{j+1}}\pi) }{\rho_j^2 - k^2}.
	\end{align*}
	Thus
	\begin{align*}
		|\langle g_j,f_k\rangle_{L^2([0,\pi])}| 
		&\leq \bigg(\f\pi2\alpha_j\bigg)^{-\f12} \f{\rho_j}{|k^2 - \rho_j^2|} \f{|\kappa_j|}{{j+1}} \Bigl|\cosh\Bigl(\f{\kappa_j}{{j+1}}\pi\Bigr)\Bigr|
		\\
		&\leq \bigg(\f\pi2\alpha_j\bigg)^{-\f12} \f{\rho_j}{{j+1}}\f{|\kappa_j|}{|k^2 - \rho_j^2|}\cosh\Bigl(\f{|\kappa_j|}{{j+1}}\pi\Bigr)
		\\
		&= \bigg(\f\pi2\alpha_j\bigg)^{-\f12} \f{j+\f{|\kappa_j|}{j+1}}{{j+1}}\f{|\kappa_j|}{|k^2-(j+\f{\kappa_j}{{j+1}})^2|}\cosh(|\kappa_j|\pi)
		\\
		&\leq \bigg(\f\pi2\alpha_j\bigg)^{-\f12} (1+M)\f{|\kappa_j|}{|k^2-(j+\f{M}{{j+1}})^2|}\cosh(M\pi).
	\end{align*}
%
%

Note that as soon as $J\geq M$ one has $J+1> j+\f{M}{j+1}$ for all $j\leq J$. Therefore, since $k\geq J+1$, the denominator in the last term above is always nonzero.
%
\end{proof}
The following proposition now follows immediately from Lemma \ref{lemma:hyp_satisfied} and Proposition \ref{prop:C_Riesz}.
\begin{prop}\label{lemma:explicit_Riesz_bound}
	Let $\{f_n\}_{n\in\N_0}$ and $\{g_n\}_{n\in\N_0}$ be as in \eqref{eq:fn-and-gn}. There exists a constant $C_M^{(2)}$, which can be computed in finitely many arithmetic operations from the information in $\Lambda$, such that for all $u\in L^2([0,\pi])$
	\begin{align}\label{eq:riesz_bound}
		\|u\|_{L^2([0,\pi])}^2 \leq C_M^{(2)} \sum_{n=0}^\infty \big|\langle u,g_n\rangle_{L^2([0,\pi])}\big|^2.
	\end{align}
\end{prop}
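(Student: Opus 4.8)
The plan is to read off the statement as the composition of the two preceding results, checking only that every quantity entering Proposition \ref{prop:C_Riesz} admits a computable bound in terms of $\Lambda$. First I would invoke Lemma \ref{lemma:hyp_satisfied}: the families $\{f_n\}_{n\in\N_0}$, $\{g_n\}_{n\in\N_0}$ from \eqref{eq:fn-and-gn} satisfy Hypothesis \ref{hyp:Omega} on $\cH=L^2([0,\pi])$ with $\eta=c$, $\mu=M$ (and $J_0=\lceil M\rceil$), together with the explicit decay $\Omega_J\le C_\Omega/J$, where $c$ and $C_\Omega$ are the $M$-dependent constants given in that lemma. Consequently Proposition \ref{prop:C_Riesz} applies with $\delta_J=\max\{4c\log(J)/J,\Omega_J\}$ and produces, for $J$ large enough, the inequality $\|u\|_{L^2}^2\le C^2\sum_n|\langle u,g_n\rangle|^2$ with $C=\|A_J^{-1}\oplus\mathrm{Id}\|/(1-\delta_J\|A_J^{-1}\oplus\mathrm{Id}\|)$; taking $C_M^{(2)}:=C_{\textnormal{Riesz}}^2$, where $C_{\textnormal{Riesz}}$ is the computable majorant for $C$ returned by Algorithms \ref{alg:compute_A} and \ref{alg:compute_C}, then gives exactly \eqref{eq:riesz_bound}.

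The only point that is not literally immediate is computability, which I would spell out. The matrix entries $T_{ij}=\langle g_j,f_i\rangle_{L^2([0,\pi])}$ are given by the closed form derived in the proof of Lemma \ref{lemma:hyp_satisfied}(ii), i.e. a rational expression in $\lambda_j^{1/2}$, $\alpha_j$ and $\sin(\lambda_j^{1/2}\pi)$; hence each $T_{ij}$ is obtainable by finitely many arithmetic operations from the numbers $\lambda_j,\alpha_j\in\Lambda$ (trigonometric evaluations being admissible in an arithmetic tower in the regime $\Delta_k^A$, $k\ge1$, by Remark \ref{rek:trig}). The parameter $\delta_J$ is not directly computable, since $\Omega_J$ is an infinite sum, but one never needs its exact value — only an upper bound — and Lemma \ref{lemma:hyp_satisfied}(i) supplies the computable majorant $\hat\delta_J:=\max\{4c\log(J)/J,\,C_\Omega/J\}\ge\delta_J$. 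Because the admissibility condition in Proposition \ref{prop:C_Riesz} has the form $\delta_J<(\text{positive quantity})$ and the expression for $C$ is monotone increasing in $\delta_J$, replacing $\delta_J$ by $\hat\delta_J$ only strengthens the hypothesis and enlarges the resulting constant; so running Algorithms \ref{alg:compute_A} and \ref{alg:compute_C} with inputs $\{\hat\delta_J\}_{J}$ and $\{T_{ij}\}_{i,j}$ yields, in finitely many arithmetic operations, a valid $C_{\textnormal{Riesz}}$, and thus $C_M^{(2)}=C_{\textnormal{Riesz}}^2$ satisfies \eqref{eq:riesz_bound} for every $u\in L^2([0,\pi])$.

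Finally I would record that both loops terminate: Algorithm \ref{alg:compute_A} halts for each fixed $J$ because $\|A_J^{-1}\|<+\infty$ (by Lemma \ref{lemma:T-A_bound}, $A_J\oplus\mathrm{Id}$ approximates the invertible operator $T$), and Algorithm \ref{alg:compute_C} halts because $\hat\delta_J\to0$ as $J\to\infty$ while $\|A_J^{-1}\|$ stays bounded (the observation preceding Proposition \ref{prop:C_Riesz}), so the test $\hat\delta_J<(a_J+1)^{-1}$ is eventually met; a harmless extra requirement $J>2M$ can be imposed in the loop so that Lemma \ref{lemma:T-A_bound} and Lemma \ref{lemma:hyp_satisfied}(ii) are in force. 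I do not expect a genuine obstacle here: the mathematical content is entirely in Lemmas \ref{lemma:hyp_satisfied} and \ref{lemma:T-A_bound} and Proposition \ref{prop:C_Riesz}, and the only mild subtlety — worth an explicit sentence — is the substitution of the non-computable $\delta_J$ by its explicit majorant $\hat\delta_J$ and the monotonicity argument justifying it.
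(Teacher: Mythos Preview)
Your proposal is correct and follows essentially the same route as the paper: invoke Lemma~\ref{lemma:hyp_satisfied} to verify Hypothesis~\ref{hyp:Omega} with $\eta=c$, $\mu=M$, then apply Proposition~\ref{prop:C_Riesz}, noting that the matrix entries $T_{ij}=\langle g_j,f_i\rangle$ are explicit trigonometric integrals computable from $\Lambda$. The only difference is cosmetic: where you bound the non-computable $\delta_J$ by the majorant $\hat\delta_J$ and appeal to monotonicity, the paper simply observes that once $J>\exp(C_\Omega/4c)$ one has $\Omega_J\le C_\Omega/J\le 4c\log(J)/J$ and hence $\delta_J=4c\log(J)/J$ \emph{exactly}, so no majorant or monotonicity argument is needed.
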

\begin{proof}
	Lemma \ref{lemma:hyp_satisfied} shows that Hypothesis \ref{hyp:Omega} is satisfied by $\{f_n\}_{n\in\N_0}$, $\{g_n\}_{n\in\N_0}$ with $\mu=M$ and $\eta=c$. Moreover, \eqref{eq:Omega_N_bound} shows that whenever  $J > \exp\big(\f{C_\Omega}{4c}\big)$
	\begin{align*}
		\delta_J &= \max\Big\{ 4c\f{\log(J)}{J},\Omega_J \Big\}
		= 4c\f{\log(J)}{J}.
	\end{align*}
Computability of $C_M^{(2)}$ follows from Proposition \ref{prop:C_Riesz} and the fact that the matrix elements $T_{ij}$ in \eqref{eq:T_ij_def} are given by scalar products $\langle g_i,f_j\rangle_{L^2([0,\pi])}$, which can be calculated symbolically for all $i,j\in\N_0$ (recall that the $g_i$'s and $f_j$'s are all cosines).
\end{proof}

Next we prove a computable bound on the operator norm of $(I-\kappa_x)^{-1}$. 
\begin{lemma}\label{lemma:explicit_bound_(I-k)}
	Let $C_M^{(2)}$ be defined as in Lemma \ref{lemma:explicit_Riesz_bound}. Then for all $x\in[0,\pi]$ one has
	\begin{align}
		\|(I-\kappa_x)^{-1}\|_{L^2([0,x])\to L^2([0,x])} &\leq C_M^{(2)},
		\tag{i}
		\\
		\|(I-\kappa_x)^{-1}\|_{L^\infty([0,x])\to L^\infty([0,x])} &\leq \pi^{\f12}+\pi\|k\|_\infty C_M^{(2)}.
		\tag{ii}
	\end{align}
\end{lemma}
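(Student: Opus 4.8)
The plan is to deduce (i) from the lower Riesz frame bound of Proposition~\ref{lemma:explicit_Riesz_bound} and then obtain (ii) by a routine bootstrap, exactly along the lines of Lemma~\ref{lemma:k_resolvent_bounded}.

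\emph{Step 1 (an algebraic identity).} Since $F$ is symmetric, the kernel $k(t,s)=-F(s,t)$ of $\kappa_x$ equals $-F(t,s)$, so $\kappa_x=-\mathbf F_x$, where $\mathbf F_x$ is the self-adjoint Hilbert--Schmidt operator on $L^2([0,x])$ with kernel $F$; hence $I-\kappa_x=I+\mathbf F_x$. For $\phi\in L^2([0,x])$ denote by $\tilde\phi\in L^2([0,\pi])$ its extension by zero. I claim that
\[
\langle (I-\kappa_x)\phi,\phi\rangle_{L^2([0,x])}=\sum_{n=0}^{\infty}\bigl|\langle \tilde\phi,g_n\rangle_{L^2([0,\pi])}\bigr|^2 ,
\]
with $f_n,g_n$ as in \eqref{eq:fn-and-gn}. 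To see this, recall that the telescoping definition of $F_N$ collapses to the finite sum $F_N(x,y)=\sum_{n=0}^{N}\bigl(g_n(x)g_n(y)-f_n(x)f_n(y)\bigr)$ (cf.\ \eqref{eq:F_finite}) and that $F_N\to F$ \emph{boundedly} on $[0,\pi]^2$ (as used repeatedly above, cf.\ \cite[Lemma 2.2.2]{Levitan}). Dominated convergence then gives
\[
\langle\mathbf F_x\phi,\phi\rangle=\langle\mathbf F\tilde\phi,\tilde\phi\rangle=\lim_{N\to\infty}\sum_{n=0}^{N}\Bigl(\bigl|\langle\tilde\phi,g_n\rangle\bigr|^2-\bigl|\langle\tilde\phi,f_n\rangle\bigr|^2\Bigr);
\]
since $\{f_n\}_{n\in\N_0}$ is an orthonormal basis of $L^2([0,\pi])$ one has $\sum_{n=0}^N|\langle\tilde\phi,f_n\rangle|^2\to\|\tilde\phi\|^2=\|\phi\|^2$, and since $\{g_n\}_{n\in\N_0}$ is a Riesz basis the series $\sum_n|\langle\tilde\phi,g_n\rangle|^2$ converges; adding $\|\phi\|^2$ yields the displayed identity.

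\emph{Step 2 (proof of (i)).} By Proposition~\ref{lemma:explicit_Riesz_bound} and the identity of Step~1,
\[
\|\phi\|^2=\|\tilde\phi\|^2\le C_M^{(2)}\sum_{n=0}^{\infty}\bigl|\langle\tilde\phi,g_n\rangle\bigr|^2=C_M^{(2)}\,\langle(I-\kappa_x)\phi,\phi\rangle\le C_M^{(2)}\,\|(I-\kappa_x)\phi\|\,\|\phi\|,
\]
hence $\|\phi\|\le C_M^{(2)}\|(I-\kappa_x)\phi\|$ for all $\phi\in L^2([0,x])$. Since $I-\kappa_x$ is invertible on $L^2([0,x])$ (cf.\ \cite[Th. 2.3.1]{Levitan}, as already invoked in Lemma~\ref{lemma:k_resolvent_bounded}), substituting $\phi=(I-\kappa_x)^{-1}\psi$ gives (i). \emph{Step 3 (proof of (ii)).} Let $f\in L^\infty([0,x])\subset L^2([0,x])$ and put $u:=(I-\kappa_x)^{-1}f\in L^2([0,x])$; from $u=f+\kappa_x u$ and continuity of $k$ one gets $u\in L^\infty([0,x])$, and for a.e.\ $t$,
\[
|u(t)|\le\|f\|_\infty+\|k(t,\cdot)\|_{L^2([0,x])}\|u\|_{L^2([0,x])}\le\|f\|_\infty+\pi^{\f12}\|k\|_\infty\,C_M^{(2)}\,\|f\|_{L^2([0,x])},
\]
using (i) in the last step. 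Since $\|f\|_{L^2([0,x])}\le\pi^{\f12}\|f\|_\infty$ and $1\le\pi^{\f12}$, taking the supremum over $t$ yields $\|u\|_\infty\le\bigl(\pi^{\f12}+\pi\|k\|_\infty C_M^{(2)}\bigr)\|f\|_\infty$, which is (ii).

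The only genuinely delicate point is the quadratic-form identity of Step~1 — specifically the interchange of the infinite sum with the double integral, which is exactly where the bounded convergence $F_N\to F$ and the $\ell^2$-Bessel property of the Riesz basis $\{g_n\}$ enter — together with the observation that the reciprocal of the \emph{lower} Riesz frame constant, namely $C_M^{(2)}$, is precisely the quantity controlling $\|(I-\kappa_x)^{-1}\|$. Everything downstream is bookkeeping.
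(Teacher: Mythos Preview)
Your proof is correct and follows essentially the same route as the paper's: both hinge on the quadratic-form identity $\langle (I-\kappa_x)\phi,\phi\rangle=\sum_n|\langle\tilde\phi,g_n\rangle|^2$ (which the paper obtains by citing \cite[Lemma 1.5.7]{FreilingYurko} while you derive it directly from the rank-one expansion of $F_N$ and dominated convergence), then apply the Riesz lower bound and Cauchy--Schwarz for (i), and the same $L^2\to L^\infty$ bootstrap for (ii). Your self-contained derivation of the identity in Step~1 is a welcome clarification, but the overall strategy is identical.
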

\begin{proof}
	We first prove (i). To this end, let $f_x\in L^2([0,x])$ and let $u_x = (I-\kappa_x)^{-1}f_x$ be  the solution to the integral equation \eqref{eq:integral_equation2}, which we rewrite here for convenience:
	\begin{align}\label{eq:integral_equation2_new}
		u_x - \int_0^x k(\cdot,s)u_x(s)\,ds = f_x.
	\end{align}
	Testing this equation with $u_x$ and using Parseval's identity (as detailed in the proof of \cite[Lemma 1.5.7]{FreilingYurko}) gives 
	\begin{align}\label{eq:plancherel_result}
		\sum_{n=0}^\infty \big|\langle u_x,g_n\rangle_{L^2([0,x])}\big|^2 = \langle f_x, u_x\rangle_{L^2([0,x])},
	\end{align}
	where $g_n(t) := \alpha_n^{-\f12} \cos\big(\lambda_n^{\f12}t\big)$.
	Combining \eqref{eq:plancherel_result} and \eqref{eq:riesz_bound} we define $u_x^0(y) := u_x(y)$ for $y\leq x$ and $u_x^0(y) := 0$ for $y>x$ and compute
	\begin{align*}
		\|u_x\|_{L^2([0,x])}^2 = \|u_x^0\|_{L^2([0,\pi])}^2 
		&\leq C_M^{(2)} \sum_{n=0}^\infty \big|\langle u_x^0,g_n\rangle_{L^2([0,\pi])}\big|^2
		\\
		&= C_M^{(2)} \sum_{n=0}^\infty \big|\langle u_x,g_n\rangle_{L^2([0,x])}\big|^2
		\\
		&= C_M^{(2)}\langle f_x, u_x\rangle_{L^2([0,x])}
		\\
		&\leq C_M^{(2)}\|f_x\|_{L^2([0,x])} \|u_x\|_{L^2([0,x])},
	\end{align*}
which implies
	\begin{equation*}
		 \|u_x\|_{L^2([0,x])} \leq C_M^{(2)}\|f_x\|_{L^2([0,x])}.
	\end{equation*}
	
	To prove (ii), we use the regularizing properties of $\kappa_x$, namely if $f_x\in L^\infty([0,x])$, then by \eqref{eq:integral_equation2_new} and H\"older's inequality we have
	\begin{align*}
		|u_x(y)| &\leq |f_x(y)| + \left|\int_0^x k(y,s)u_x(s)\,ds\right|
		\\
		&\leq \|f_x\|_{L^\infty} + \pi^{\f12}\|k\|_\infty\|u_x\|_{L^2}
		\\
		&\leq \|f_x\|_{L^\infty} + \pi^{\f12}\|k\|_\infty C_M^{(2)}\|f_x\|_{L^2}
	\end{align*}
which implies
	\begin{equation*}
	 \|u_x\|_{L^\infty} \leq \left(\pi^{\f12}+\pi\|k\|_\infty C_M^{(2)}\right)\|f_x\|_{L^\infty}.
	\end{equation*}
	This completes the proof.
\end{proof}

\medskip

We can now finalize the proof of Theorem \ref{thm:main2}\eqref{eq:mainth3}. Recall that our starting point was eq. \eqref{eq:explicit_bounds0},
\begin{align*}
	\|u_{x,N}-u_x\|_{L^\infty([0,x])} &\leq \|(I-\kappa_x)^{-1}\|_{L^\infty\to L^\infty}
	\\
	\times& \Big( \|f_{x,N} - f_x\|_{L^\infty([0,x])}  + \pi \|k_N-k\|_{L^\infty([0,\pi]^2)} \|u_{x,N}\|_{L^2([0,x])} \Big),
\end{align*}
where we wanted to bound each  term. First, we note that from \eqref{eq:Phi_explicit_bound} follows the uniform bound $\|k\|_{L^\infty} \leq C_M^{(1)}$.  Combining this bound with Lemmas \ref{lemma:explicit_bounds_F-FN} and \ref{lemma:explicit_bound_(I-k)} we have
\begin{align}
	\|(I-\kappa_x)^{-1}\|_{L^2\to L^2} &\leq C_M^{(2)},
	\label{eq:ex_bounds0}
	\\[2mm]
	\|(I-\kappa_x)^{-1}\|_{L^\infty\to L^\infty} &\leq \pi^{\f12}+\pi C_M^{(1)} C_M^{(2)},
	\label{eq:ex_bounds1}
	\\[2mm]
	\sup_{x\in[0,\pi]}\|f_{x,N} - f_x\|_{L^\infty([0,x])} &\leq C_M^{(1)} N^{-\f12},
	\label{eq:ex_bounds2}
	\\
	\|k_N-k\|_{L^\infty([0,\pi]^2)} &\leq C_M^{(1)} N^{-\f12}.
	\label{eq:ex_bounds3}
\end{align}
Moreover, using Lemma \ref{lemma:T_inverse_bound} and \eqref{eq:Phi_explicit_bound} again, we have
\begin{align}
\begin{aligned}
	\|u_{x,N}\|_{L^2([0,x])} &\leq \|(I-\kappa_{x,N})^{-1}\|_{L^2([0,x])\to L^2([0,x])}\|f_{x,N}\|_{L^2}
	\\
	&\leq \f{\|(I-\kappa_x)^{-1}\|_{L^2\to L^2}}{1-\|\kappa_x-\kappa_{x,N}\|_{L^2\to L^2}\|(I-\kappa_x)^{-1}\|_{L^2\to L^2}} \pi^{\f12}C_M^{(1)},
	\label{eq:u_N_explicit_bound}
\end{aligned}
\end{align}
where the last line holds if $\|\kappa_x-\kappa_{x,N}\|_{L^2\to L^2}<\|(I-\kappa_x)^{-1}\|_{L^2\to L^2}^{-1}$. But this last condition can be ensured  by choosing $N$ large enough: by \eqref{eq:ex_bounds1}-\eqref{eq:ex_bounds3} the choice 
$ N > \pi (C_M^{(1)}C_M^{(2)})^2$ is sufficient, where we  remind that this constant is explicitly computable. For the sake of definiteness, let us assume from now on that $N>N_0$, where
\begin{align*}
	N_0 := 2 \pi \big(C_M^{(1)}C_M^{(2)}\big)^2.
\end{align*}
This choice ensures $\|\kappa_x-\kappa_{x,N}\|_{L^2\to L^2}\|(I-\kappa_x)^{-1}\|_{L^2\to L^2}< 1/2$.
Inserting the necessary bounds into \eqref{eq:u_N_explicit_bound} we obtain
\begin{align}\label{eq:ex_bounds4}
	\|u_{x,N}\|_{L^2([0,x])} &< 2\pi^{\f12}C_M^{(1)}C_M^{(2)} \quad \text{ for }N>N_0.
\end{align}
Using \eqref{eq:ex_bounds1}-\eqref{eq:ex_bounds3} and \eqref{eq:ex_bounds4} in \eqref{eq:explicit_bounds0} we have that for all  $N>N_0$ 
\begin{align*}
	\|u_{x,N}-u_x\|_{L^\infty([0,x])} &\leq 
	\Big(\pi^{\f12}+\pi C_M^{(1)} C_M^{(2)}\Big)\Big( C_M^{(1)}N^{-\f12} + \pi C_M^{(1)}N^{-\f12} 2\pi^{\f12}C_M^{(1)}C_M^{(2)} \Big)
	\\
	&= C_M^{(1)}N^{-\f12}\Big(\pi^{\f12}+\pi C_M^{(1)} C_M^{(2)}\Big)\Big( 1 + 2\pi^{\f32} C_M^{(1)}C_M^{(2)} \Big).
\end{align*}
Taking the supremum over $x\in[0,\pi]$ and reverting back to the classical notation, we have shown 
\begin{align}\label{eq:K-KN_explicit_bound}
	\|K_N-K\|_{L^\infty([0,\pi]^2)} &\leq N^{-\f12} C_M^{(1)}\Big(\pi^{\f12}+\pi C_M^{(1)} C_M^{(2)}\Big)\Big( 1 + 2\pi^{\f32} C_M^{(1)}C_M^{(2)} \Big).
\end{align}
Now the calculation \eqref{eq:q-error-calculation} implies
\begin{align}\label{eq:q_final_bound}
	\|q_N-q\|_{W^{-1,\infty}([0,\pi])} &\leq 2N^{-\f12} C_M^{(1)}\Big(\pi^{\f12}+\pi C_M^{(1)} C_M^{(2)}\Big)\Big( 1 + 2\pi^{\f32} C_M^{(1)}C_M^{(2)} \Big),
\end{align}
It remains to prove error bounds for the boundary conditions $h$, $H$. To this end, note that $h = K(0,0)$. Thus, with $h_N := K_N(0,0)$, eq. \eqref{eq:K-KN_explicit_bound} implies 
\begin{align}\label{eq:h_final_bound}
	|h-h_N| &\leq N^{-\f12} C_M^{(1)}\Big(\pi^{\f12}+\pi C_M^{(1)} C_M^{(2)}\Big)\Big( 1 + 2\pi^{\f32} C_M^{(1)}C_M^{(2)} \Big).
\end{align}
To compute $H$, recall our assumption $\omega=0$, hence by \eqref{eq:asymptotics} we have
\begin{align*}
	H &= - h - \f12\int_0^\pi q(x)\,dx
	\\
	&= - h - K(\pi,\pi).
\end{align*}
Hence we may define the computable approximation $H_N:=-h_N-K_N(\pi,\pi)$. Then by \eqref{eq:K-KN_explicit_bound} we have
\begin{align}\label{eq:H_final_bound}
	|H-H_N| &\leq |h-h_N| + |K(\pi,\pi) - K_N(\pi,\pi)|
	\nonumber
	\\[1mm]
	&\leq 2N^{-\f12} C_M^{(1)}\Big(\pi^{\f12}+\pi C_M^{(1)} C_M^{(2)}\Big)\Big( 1 + 2\pi^{\f32} C_M^{(1)}C_M^{(2)} \Big).
\end{align}
Together, \eqref{eq:q_final_bound}, \eqref{eq:h_final_bound} and \eqref{eq:H_final_bound} imply that the algorithm
\begin{align*}
	\Gamma_N(\lambda,\alpha) := (q_N,h_N,H_N)
\end{align*}
achieves explicit error control with convergence rate $N^{-\f12}$ and computable constants $C_M^{(1)}$, $C_M^{(2)}$. We immediately conclude that $(\Omega_{0,M},\Lambda,\mathcal{M}_\infty,\Xi_\infty) \in \Delta_1^A$. Showing that $(\Omega_{0,M},\Lambda,\mathcal{M}_p,\Xi_p) \in \Delta_1^A$ for $p<+\infty$ follows immediately, using H\"older's inequality.
\qed
\section{Numerical Results}\label{sec:numerics}
The algorithm that computes $q_N$ by solving \eqref{eq:linear_system} can straightforwardly be implemented in MATLAB. Figure \ref{fig:RS_potentials} shows the reconstruction of 3 different potentials from 10 and 30 eigenvalues and norming constants, respectively, with Neumann boundary conditions $h=H=0$. These potentials were previously suggested in \cite{Rundell-Saks} to test the performance of reconstruction algorithms in different situations. The first potential, $q^{(1)}$, is a smooth function, the second, $q^{(2)}$, is discontinuous and the third, $q^{(3)}$, is continuous with discontinuous derivative.

The spectra and norming constants were computed in MATLAB using the MATSLISE package \cite{matslise}. To improve convergence, the reconstruction algorithm approximates $\omega/\pi$ by $\varpi:=\lambda_{N-1}-(N-1)^2$ (cf. \eqref{eq:asymptotics}) and then applies the algorithm from Section \ref{sec:finite} to the set $\{\lambda_n-\varpi\}_{n=0}^{N-1}$. For reasons of practicality, we do not compute the derivative of $K(\cdot,\cdot)$ symbolically, as suggested below eq. \eqref{eq:u_formula}. Rather, we compute the values $K(x,x)$ exactly and then differentiate numerically.

Re-computing the spectrum for the reconstructed potential $q_{N-1}$ (and boundary conditions $h_{N-1}$, $H_{N-1}$)  with MATSLISE gives good agreement with the original spectrum. Denoting the original eigenvalues by $\{\lambda_0,\dots,\lambda_{N-1}\}$ and the ones obtained from $q_{N-1}$ by $\{\mu_0,\dots,\mu_{N-1}\}$ and measuring the error by 
\begin{align}\label{eq:e_N_def}
	e_N &:= \sqrt{ \sum_{i=0}^{N-1} \big(\lambda_i^{\f12} - \mu_i^{\f12}\big)^2 }
\end{align}
gives us a means of assessing performance.
In each case the reconstruction gives an error of at most $e_N\lesssim 8\cdot10^{-8}$. The precise values for $e_N,h_N,H_N$ are shown in Table \ref{tab:ehH}.
\renewcommand{\arraystretch}{1.2}
\begin{table}[H]
\begin{tabular}{l c c c}
	& & \boldmath${N=9}$: & \\
	\toprule
	 {Potential} & $h_N$ & $H_N$ & $e_N$\\
	\hline
	$q^{(1)}$ & $-0.015$ & $0.016$ & $1.1\cdot 10^{-9}$\\
	$q^{(2)}$ & $-0.041$ & $0.021$ & $4.5\cdot 10^{-8}$\\
	$q^{(3)}$ & $-0.031$ & $0.011$ & $3.1\cdot 10^{-9}$\\
	\bottomrule
\end{tabular}
\qquad
\begin{tabular}{l c c}
	& \boldmath${N=29}$: & \\
	\toprule
	  $h_N$ & $H_N$ & $e_N$\\
	\hline
	 $-0.005$ & $0.005$ & $5\cdot 10^{-11}$\\
	 $-0.014$ & $-0.011$ & $5\cdot 10^{-8}$\\
	 $-0.010$ & $0.008$ & $7.6\cdot 10^{-8}$\\
	\bottomrule
\end{tabular}
\caption{Computed values for boundary conditions and spectral error for the potentials in Figure \ref{fig:RS_potentials}.}
\label{tab:ehH}
\end{table}
Note that the more eigenvalues are in play, the longer the sum in \eqref{eq:e_N_def} becomes. 

The Matlab code that produced Figure \ref{fig:RS_potentials} and the values in Table \ref{tab:ehH} is openly available at \url{https://github.com/frank-roesler/inverse_SCI}.

\begin{figure}[htbp]
	\centering
	\includegraphics{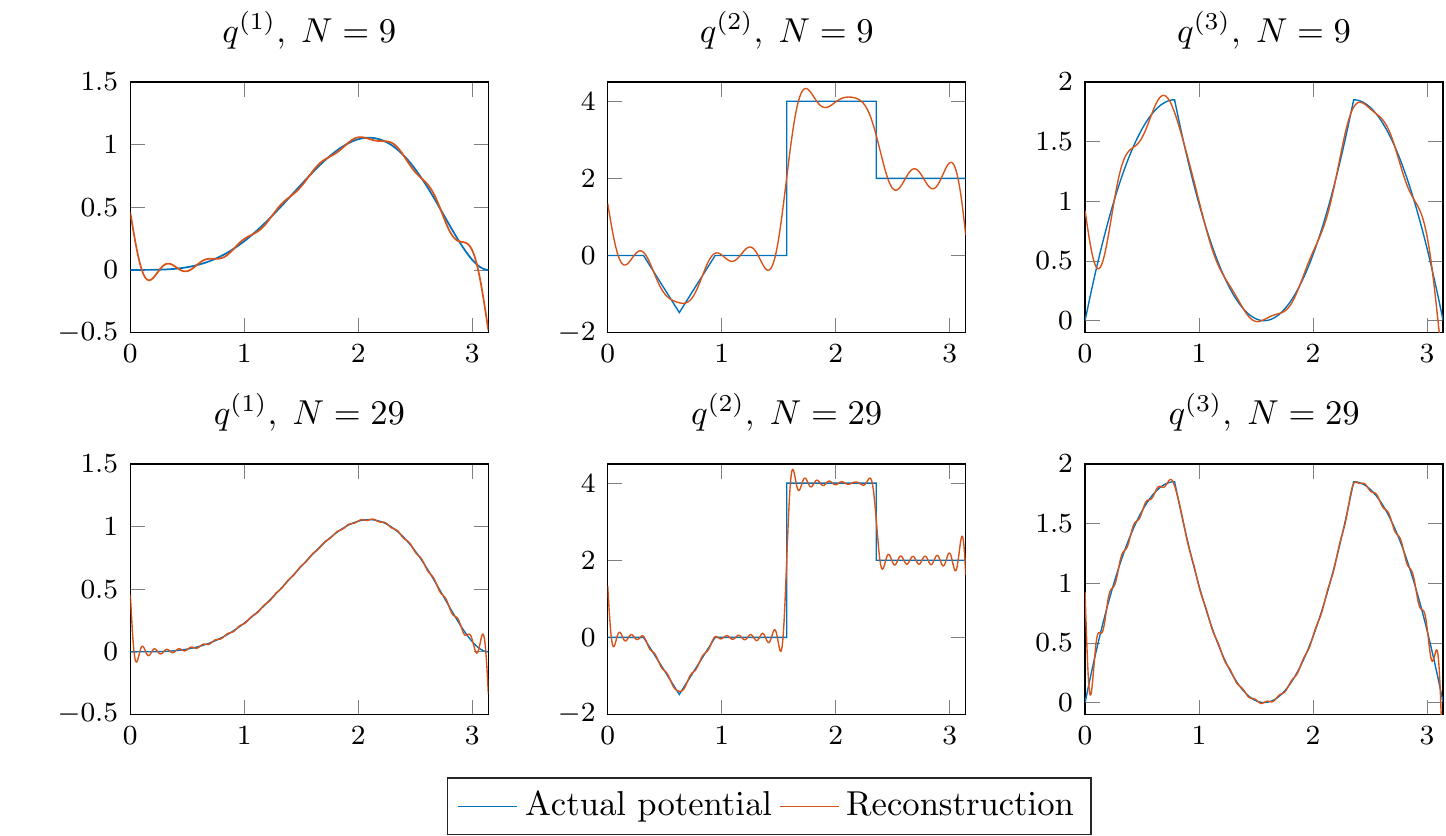}
	\caption{Reconstruction of different potentials by solving \eqref{eq:linear_system}. Top row: reconstruction from 10 eigenvalues and norming constants; bottom row: reconstruction from 30 eigenvalues and norming constants.}
	\label{fig:RS_potentials}
\end{figure}

\appendix

\section{Further Implications}
In this section we provide further implications of Theorem \ref{thm:main2}. According to Theorem \ref{thm:main2}\eqref{eq:mainth1} the potential can be reconstructed exactly from a finite number $N$ of `nontrivial' eigenvalues and norming constants (by `trivial' we mean $\lambda_n=n^2$ and $\alpha_n=\frac\pi2$). What if the number $N$ of spectral data is not known \emph{a priori}? The result below shows that we can retain the $\Delta_0^A$ result if we replace knowledge of the number $N$, with knowledge that:
	\begin{quote}
	If, for some given $\tilde{n}$, there are $\tilde{n}$ consecutive $(\lambda_n,\alpha_n)$ that are `trivial' then all subsequent $(\lambda_n,\alpha_n)$  are `trivial'.
	\end{quote}
This provides us with a mechanism to stop looking for additional spectral data after a finite amount of time.
\begin{corollary}
	Theorem \ref{thm:main2} immediately implies the following classification.
	Let $\tilde n\in\N$. Denote by $\tilde\Omega_{\tilde n}\subset\Omega$ the set of $(\lambda,\alpha)$ such that there exists $n_0\in\N$ such that if $\lambda_n = n^2$ and $\alpha_n = \f{\pi}{2}$ for all $n\in\{n_0+1,\dots,n_0+\tilde n\}$, then $\lambda_n = n^2$ and $\alpha_n = \f{\pi}{2}$ for all $n> n_0$.
		Then for all $\tilde n\in\N$ one has
		\begin{align*}
			(\tilde\Omega_{\tilde n},\Lambda,\mathcal{M}_\mathrm{disc},\Xi_\mathrm{disc}) &\in \Delta_0^A.
		\end{align*}
\end{corollary}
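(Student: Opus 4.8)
The plan is to reduce the statement to Theorem \ref{thm:main2}\eqref{eq:mainth1} by exhibiting a single arithmetic algorithm $\Gamma$ (not a tower — we are claiming membership in $\Delta_0^A$, so $\Gamma$ must terminate after finitely many arithmetic operations) that, on input $(\lambda,\alpha)\in\tilde\Omega_{\tilde n}$, locates a suitable cutoff $N$, truncates the data to $\Omega_N$, and then applies the finite-data algorithm $\Gamma_N^{\mathrm{fin}}$ from Section \ref{sec:finite}. Concretely: initialize $n_0:=0$ and repeatedly read the pairs $(\lambda_n,\alpha_n)$ in order; each time we encounter an index $n$ with $\lambda_n=n^2$ and $\alpha_n=\pi/2$, check whether the previous $\tilde n$ consecutive indices $\{n_0+1,\dots,n_0+\tilde n\}$ (with $n_0$ the last index seen so far that was \emph{not} trivial, or $0$) have all been trivial; as soon as a block of $\tilde n$ consecutive trivial pairs has been observed, stop and set $N:=n_0$, the largest index seen so far that failed the triviality test. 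By the defining property of $\tilde\Omega_{\tilde n}$, once $\tilde n$ consecutive trivial pairs occur at indices $n_0+1,\dots,n_0+\tilde n$ we are guaranteed $\lambda_n=n^2$, $\alpha_n=\pi/2$ for \emph{all} $n>n_0$, so the truncated data $\{\lambda_n\}_{n\le n_0}\cup\{n^2\}_{n>n_0}$ and $\{\alpha_n\}_{n\le n_0}\cup\{\pi/2\}_{n>n_0}$ actually coincides with $(\lambda,\alpha)$ itself, and $(\lambda,\alpha)\in\Omega_{n_0}=\Omega_N$.

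Having identified $N$, I would then invoke Theorem \ref{thm:main2}\eqref{eq:mainth1}: there is a $\Delta_0^A$ algorithm $\Gamma_N^{\mathrm{fin}}:\Omega_N\to\mathcal M_{\mathrm{disc}}$ that returns $(q,h,H)$ exactly using only finitely many arithmetic operations (together with the trigonometric oracle, exactly as in Remark \ref{rek:trig}). We output $\Gamma(\lambda,\alpha):=\Gamma_N^{\mathrm{fin}}(\lambda,\alpha)$. Since the search loop terminates after at most $n_0+\tilde n$ steps (a finite number, depending on the input but finite for every fixed input), and $\Gamma_N^{\mathrm{fin}}$ itself is finite, the composite $\Gamma$ uses only finitely many arithmetic operations on each input. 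One must also verify that $\Gamma$ satisfies the axioms of a general algorithm from Definition \ref{def:Algorithm}: the set $\Lambda_\Gamma(\lambda,\alpha)$ of evaluations used is exactly $\{\lambda_n,\alpha_n : n\le n_0+\tilde n\}$ together with whatever $\Gamma_N^{\mathrm{fin}}$ reads (a finite set), the action depends only on these values, and the consistency condition (iii) holds because any $(\tilde\lambda,\tilde\alpha)$ agreeing with $(\lambda,\alpha)$ on all these evaluations will drive the search loop along the identical trajectory and hence produce the same $n_0$, the same $\Lambda_\Gamma$, and the same output. Because $\mathcal M_{\mathrm{disc}}$ carries the discrete metric, "exact recovery" is literally $d(\Gamma(\lambda,\alpha),\Xi_{\mathrm{disc}}(\lambda,\alpha))=0$, so no limit is needed and $(\tilde\Omega_{\tilde n},\Lambda,\mathcal M_{\mathrm{disc}},\Xi_{\mathrm{disc}})\in\Delta_0^A$ follows from Definition \ref{def:sci}.

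The only genuine subtlety — and the point I would be careful to state explicitly — is that the number of arithmetic operations, while finite for each input, is \emph{not} bounded uniformly over $\tilde\Omega_{\tilde n}$: an input whose last nontrivial datum sits at a very large index $n_0$ forces a correspondingly long search. This is perfectly compatible with the $\Delta_0^A$ definition, which only requires that for each fixed $T$ there is \emph{some} finite algorithm (equivalently, via Definition \ref{def:Tower} and \ref{def:sci}, a tower of height $0$, i.e. a single general algorithm $\Gamma$ with $\Xi=\Gamma$), with no uniformity in the number of steps. I would remark on this to forestall the natural objection, and note that it is the same phenomenon already implicitly present in \eqref{eq:mainth1} (where $N$ is an input) versus \eqref{eq:mainth4} (where it is not) — indeed the present corollary is a mild common generalization of that passage. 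I do not anticipate any technical obstacle beyond bookkeeping; the entire content is the elementary observation that the hypothesis defining $\tilde\Omega_{\tilde n}$ converts the \emph{a priori} unknown cutoff into something detectable in finite time.
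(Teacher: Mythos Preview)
Your proposal is correct and follows essentially the same approach as the paper's own proof: scan the data for the first block of $\tilde n$ consecutive trivial pairs, set $N$ to the last nontrivial index, and hand the result to the finite-data algorithm $\Gamma_N^{\mathrm{fin}}$ from Theorem~\ref{thm:main2}\eqref{eq:mainth1}. Your write-up is in fact slightly more careful than the paper's, since you explicitly verify the consistency axioms of Definition~\ref{def:Algorithm} and flag the non-uniformity of the running time, points the paper leaves implicit.
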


\begin{proof}
	Let $\tilde n\in\N$. It is easy to see that every $(\lambda,\alpha)\in\tilde\Omega_{\tilde n}$ is in $\Omega_N$ for some $N=N(\lambda,\alpha)\in\N$. In fact, this number $N(\lambda,\alpha)$ can be computed in finite time, as Algorithm \ref{alg:compute_N} below shows. 
	Therefore we may define 
	\begin{align*}
		\Gamma(\lambda,\alpha) := \Gamma_{N(\lambda,\alpha)}^{\text{fin}}(\lambda,\alpha)
	\end{align*}
	where $N(\lambda,\alpha)$ is computed by Algorithm \ref{alg:compute_N} and $ \Gamma_{N}^{\text{fin}}$ is the algorithm provided by Theorem \ref{thm:main2}\eqref{eq:mainth1}. Since all computations terminate in finite time, it follows immediately that $\tilde\Omega_{\tilde n}\in \Delta_0^A$.
	\medskip

\begin{algorithm}[H]
	\setlength{\lineskip}{1mm}
	\SetAlgorithmName{Algorithm}{}{}
	\SetAlgoInsideSkip{smallskip}
	\SetAlgoSkip{bigskip}
	\SetAlgoLined
	\KwIn{$\tilde n$ and $(\lambda,\alpha)\in\tilde\Omega_{\tilde n}$}
	Initialize $\mathrm{ctr}=0$, $n=1$\;
	 \While{True}{
	 	\eIf{$\lambda_n = n^2$ \textbf{\emph{and}} $\alpha_n = \nicefrac\pi2$}{
	 		$\mathrm{ctr} := \mathrm{ctr}+1$\;
	 	}{
	 		$\mathrm{ctr}:=0$\;
	 	}
	 	\eIf{$\mathrm{ctr} = \tilde n$}{
	 		break out of loop\;
	 	}{
 		 	$n := n+1$\;
 		 }
	 }
	 \Return{$N:=n-\mathrm{ctr}+1$}
	 \caption{Compute $N(\lambda,\alpha)$}
	 \label{alg:compute_N}
\end{algorithm}
\end{proof}

\bibliography{mybib}
\bibliographystyle{abbrv}

\end{document}